\documentclass{arXiv}

\usepackage{amssymb}
\usepackage{graphicx}
\usepackage[cmtip,all]{xy}

\newtheorem{theorem}{Theorem}[section]
\newtheorem{lemma}[theorem]{Lemma}
\newtheorem{proposition}[theorem]{Proposition}
\newtheorem{corollary}[theorem]{Corollary}

\numberwithin{equation}{section}

\newcommand{\R}{\mathbb{R}}
\newcommand{\Om}{\Omega}
\newcommand{\eps}{\varepsilon}
\newcommand{\ave}[1]{\langle #1\rangle}
\newcommand{\Tmax}{T_{\max}}

\begin{document}

\title[Large-diffusion with exponential nonlinearity]
{Large-diffusion dynamics for a planar Neumann heat equation with exponential nonlinearity}

\author[J. Seo]{Juneyoung Seo}
\address{Department of Mathematics and Institute of Mathematical Science,
Pusan National University, Busan 46241, Republic of Korea}
\email{juneys@pusan.ac.kr}
\thanks{The author was supported by the National Research Foundation of Korea grant funded by the Korea government
(MSIT) (RS-2022-NR072398).}

\subjclass{Primary 35K57, 35K58, 35B35, 35B40, 35B44}
\keywords{Large diffusion, reaction-diffusion equations, uniform stabilization,
    boundary concentration, finite-time blow-up, Moser--Trudinger inequality}

\date{\today}

\begin{abstract}
We study the Neumann problem $u_t-\eps\Delta u=e^u-1-au\ (a>1)$ on a smooth bounded domain $\Om\subset\R^2$.
For the spatially homogeneous problem, $0$ is stable, the positive
equilibrium $\xi_a$ is unstable, and solutions starting above $\xi_a$ blow up in finite time.
Although finite-time blow-up persists at every diffusivity, 
we show that sufficiently large diffusion recovers this scalar trichotomy uniformly on every bounded $H^1$ ball, and that blow-up occurs precisely when the spatial mean crosses $\xi_a$. 
For initial data with $\|u_0\|_{H^1}\le R$ and spatial mean at
most $\xi_a-\delta$, let $\eps_{\mathrm{unif}}(R,\delta)$ denote the uniform diffusion threshold above which all such solutions are global and converge to $0$. 
We prove $\log \eps_{\mathrm{unif}}(R,\delta)=R^2/(8\pi)+O(\log R)$ as $R\to\infty$.
The domain-independent coefficient $1/(8\pi)$ arises from the sharp
mean-zero Moser--Trudinger inequality. A matching lower bound is obtained
from boundary-concentrating Moser profiles via a localized Kaplan argument.
\end{abstract}

\maketitle

\section{Introduction}\label{sec:intro}

\subsection{Problem and main results}

Let $\Om\subset\R^2$ be a smooth bounded domain, let $a>1$, and
consider
\begin{equation*}
\left\{
\begin{aligned}
 u_t-\eps\Delta u&=f(u),
     &&x\in\Om,\quad t>0,\\
 \partial_\nu u&=0,
     &&x\in\partial\Om,\quad t>0,\\
 u(x,0)&=u_0(x),
     &&x\in\Om,
\end{aligned}
\right.
\tag{$P_\eps$}\label{eq:P}
\end{equation*}
where $f(s)=e^s-1-as$ and $\eps>0$ is the diffusivity. 

The exponential source is of Frank-Kamenetskii type \cite{FrankKamenetskii}, while the
linear term represents distributed heat loss; see McIntosh and Tolputt
\cite{McIntoshTolputt} for a Frank-Kamenetskii model incorporating
volumetric heat losses, including a linear temperature-dependent loss.
The Neumann condition imposes zero conductive heat flux through the boundary.
For the classical combustion background, see Bebernes--Eberly \cite{BebernesEberly};
see Lacey \cite{Lacey} for a PDE analysis of thermal runaway.

For spatially homogeneous states, \eqref{eq:P} reduces to
\begin{equation}\label{eq:ODE}
 y'=f(y).
\end{equation}
The assumption $a>1$ ensures that $f'(0)=1-a<0$ and that the second zero of $f$
lies at a positive level $\xi_a$, producing the stable-origin and unstable-threshold 
scalar phase portrait used throughout the paper. 
Scalar solutions starting below $\xi_a$ converge to $0$,
whereas those starting above $\xi_a$ blow up in finite time. Thus
$\xi_a$ is the scalar threshold between cooling and thermal runaway.

Large diffusion typically suppresses spatial oscillations and drives
reaction--diffusion dynamics toward spatially homogeneous ODE or
finite-dimensional limiting dynamics. Two features make
\eqref{eq:P} different from the standard dissipative setting. First,
finite-time blow-up persists for every diffusivity, so the equation
does not generate a global semiflow on the full phase space. Second,
uniformity over bounded subsets of $H^1(\Om)$ is nontrivial: in two
dimensions such sets admit arbitrarily strong spatial concentration,
which can be amplified by the exponential source before diffusion has
homogenized the solution.

We therefore ask two questions about uniformity in the large-diffusion regime. 
On a prescribed bounded $H^1$ family, can a single diffusivity recover all three
scalar alternatives---decay, threshold behavior, and finite-time
blow-up? If the family is allowed to grow, what is the worst-case
diffusivity required to stabilize all data whose mean remains a fixed
distance below $\xi_a$? 

Both questions are answered in the following theorems.

We write
\[
 \ave{u}=\frac1{|\Om|}\int_\Om u\,dx,
 \qquad
 \|u\|_{H^1}^2=\|u\|_2^2+\|\nabla u\|_2^2,
\]
and let $\Tmax=\Tmax(u_0,\eps)$ denote the maximal existence time.

\begin{theorem}
\label{thm:intro-phase}
For every $R>0$ there exists $\eps_{\mathrm{dyn}}(R)>0$, depending only on
$a$, $\Om$, and $R$, with the following property. If
$\eps>\eps_{\mathrm{dyn}}(R)$ and $\|u_0\|_{H^1}\le R$, then exactly one
of the following alternatives occurs:
\begin{enumerate}
\item the solution blows up in finite time;
\item the solution is global and converges exponentially to $0$ in
      $H^1(\Om)$ and in $L^\infty(\Om)$;
\item the solution is global and converges to the constant equilibrium
      $\xi_a$ in $H^1(\Om)$ and in $L^\infty(\Om)$.
\end{enumerate}
Moreover,
\begin{equation}\label{eq:intro-mean-crossing}
 \Tmax<\infty
 \quad\Longleftrightarrow\quad
 \ave{u(t_0)}>\xi_a
 \quad\text{for some }t_0<\Tmax.
\end{equation}
In particular, within the closed $H^1$ ball of radius $R$, the sets of
initial data giving the first two alternatives are relatively open.
\end{theorem}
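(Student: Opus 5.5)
Our plan is to split $u=\ave{u}+w$ with $\ave{w}=0$ and exploit that, for $\eps$ large, the mean-zero part is damped at rate $\eps\lambda_1$ ($\lambda_1$ the first nonzero Neumann eigenvalue) while the mean obeys a perturbed scalar ODE
\[
 \frac{d}{dt}\ave{u}=f(\ave{u})+\Bigl(\ave{f(u)}-f(\ave{u})\Bigr).
\]
The first step is a uniform short-time estimate. By the sharp mean-zero Moser--Trudinger inequality, $\|u_0\|_{H^1}\le R$ gives a bound on $\int_\Om e^{p|w_0|}$ for $p$ up to a multiple of $R^{-2}$ only, so we cannot control $e^{u}$ in $L^p$ for large $p$ directly. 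Instead we will run an energy estimate for $w$: testing the $w$-equation with $-\Delta w$ and using Moser--Trudinger together with Gagliardo--Nirenberg, show that on a time interval $[0,\tau]$ with $\tau$ independent of $\eps\ge1$ the solution exists, $\|u(t)\|_{H^1}\le 2R+C$, and, with the smoothing of $e^{\eps t\Delta}$ at times $t\sim 1/\eps$, that $\|w(t)\|_{H^1}$ decays like $e^{-c\eps t}$ up to a forcing term. The point is that diffusion acts on times $O(1/\eps)$ while the reaction needs $O(1)$ time to amplify, so with $\eps>\eps_{\mathrm{dyn}}(R)$ we reach a time $t_1\le\tau$ with $\|w(t_1)\|_{L^\infty}\le\eta$ and $|\ave{u(t_1)}|\le C(R)$ (the mean moves by at most $\int_0^{t_1}\ave{|f(u)|}$, bounded via Moser--Trudinger).

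The second step is the long-time analysis in the invariant-looking region $\{\|w\|_{L^\infty}\le\eta\}$. There $|\ave{f(u)}-f(\ave{u})|\le C e^{\ave u}\|w\|_2^2$ and the $w$-equation gives $\frac{d}{dt}\|w\|_2^2\le -2(\eps\lambda_1-e^{\ave u+\eta})\|w\|_2^2$, so as long as $\ave{u}$ stays below a level $M$ with $e^{M+1}\ll\eps\lambda_1$, $w$ decays exponentially and $L^\infty$ smallness propagates by parabolic smoothing. The mean then follows the scalar phase portrait of \eqref{eq:ODE} up to an exponentially small, integrable perturbation. This yields three cases: if $\ave{u}$ exits $(-\infty,\xi_a]$ upward, comparison with the ODE (using $\ave{f(u)}\ge f(\ave{u})$ by Jensen, since $f$ is convex) forces $\ave u\to\infty$ in finite time, hence blow-up; if $\ave u$ eventually stays below $\xi_a-\delta'$, then linearization at $0$ ($f'(0)=1-a<0$) gives exponential convergence to $0$ in $H^1$ and $L^\infty$; otherwise $\ave u\to\xi_a$ and $w\to0$, giving the third alternative. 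Jensen also gives the converse direction of \eqref{eq:intro-mean-crossing} for free: if $\ave{u(t_0)}>\xi_a$ then $\frac{d}{dt}\ave u\ge f(\ave u)>0$, and the ODE $y'=f(y)$ blows up, so $\Tmax<\infty$. For the forward direction, a global bounded scenario excludes any crossing. Conversely, if $\ave u\le\xi_a$ for all $t$, the second step keeps $u$ bounded, so $\Tmax=\infty$. Exclusivity of the alternatives is immediate.

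Openness of alternatives (1) and (2) within the ball follows from continuous dependence on $u_0$ in $H^1$ over finite time intervals: blow-up is characterized by the open condition $\ave{u(t_0)}>\xi_a$, and decay by the open condition that at some finite time $\|w\|_{L^\infty}\le\eta$ and $\ave u<\xi_a-\delta'$, which lies in a uniform basin of $0$. The main obstacle is the first step: getting $\eps_{\mathrm{dyn}}(R)$ uniform over concentrating $H^1$ data. There, we expect the exponent in Moser--Trudinger (roughly $e^{CR^2}$) to dictate how large $\eps$ must be. We will first try to bound $\int e^{2u}$ on $[0,\tau]$ by a Gronwall argument for $\int e^{p w}$, using the mean-zero sharp constant.
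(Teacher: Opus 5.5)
Your proposal is correct in outline, but it takes a genuinely different route from the paper's.

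\textbf{How the paper argues.} The paper does no homogenization step in the trichotomy. For a trajectory whose mean never exceeds $\xi_a$, it combines the nonincreasing energy $E_\eps$ with the $q=1$ Moser--Trudinger bound to get a global barrier $\sup_t\|\nabla u(t)\|_2^2<R^2+1$ (Lemma~\ref{lem:energy-barrier}). It then uses an $L^2$ contraction of $u^\perp$ valid on that whole region (Theorem~\ref{thm:oscillation-contraction}). Finally, compactness and energy dissipation (Lemma~\ref{lem:positive-compactness}) force every $\omega$-limit point to be a constant zero of $f$.

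\textbf{How your route differs.} Your first step is in substance the paper's fast-time homogenization (Lemma~\ref{lem:barrier}, Lemma~\ref{lem:heat-homog}), run with target accuracy $\eta$ instead of $\delta/2$. The gap hypothesis is replaced by the dichotomy ``$\ave{u}\le\xi_a$ or blow-up'' (Proposition~\ref{prop:mean-threshold}); the proof of Lemma~\ref{lem:barrier} in fact uses only $\ave{u_0}\le\xi_a$. This places the solution, at a time $t_1$ of order $\eps^{-1}\log(2+R)$, in an $L^\infty$-small neighborhood of the constants. There $(e^w-1)w\le e^\eta w^2$ gives a contraction threshold $\eps\lambda_2>e^{\xi_a+1}$ (your $\lambda_1$ is the paper's $\lambda_2$) that does not depend on $R$. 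The smallness propagates, and the mean solves $y'=f(y)$ up to a nonnegative, exponentially decaying perturbation.

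\textbf{What each route buys.} Your route avoids the Lyapunov functional and the LaSalle step. It also gives more quantitative information, and all the $R$-dependence sits in the transient. The paper's route needs no transient analysis and yields \eqref{eq:intro-dyn-size} with the sharp exponent. If you do your first step with $L^2$ bounds on $f(u)$ (testing by $-\Delta w$), your threshold is only of order $e^{R^2/(4\pi)}$. That suffices for the theorem as stated, but not for \eqref{eq:intro-dyn-size} unless you use the $q\downarrow1$ version of Lemma~\ref{lem:barrier}.

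\textbf{Details to write out.} First, $\limsup\ave{u}=\xi_a$ forces $\ave{u}\to\xi_a$: since $f<0$ on $(0,\xi_a)$ and the perturbation tends to $0$, the mean cannot climb back through a band below $\xi_a$ at large times. Second, for the propagation bootstrap to close, $\|u^\perp(t_1)\|_\infty$ must be smaller than $\eta$ by the $L^2\to L^\infty$ smoothing constant.

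\textbf{A false premise in your first step.} This premise is what makes the first step look like the main obstacle. The mean-zero Moser--Trudinger inequality bounds $\int_\Om e^{\beta|w|}\,dx$ for every $\beta>0$, namely by $C\exp\bigl(\beta^2\|\nabla w\|_2^2/(8\pi)\bigr)$ (Lemma~\ref{lem:expint}). Only $\int_\Om e^{\alpha w^2}\,dx$ is restricted to $\alpha\lesssim R^{-2}$. Hence $\|e^u\|_2\le C\exp\bigl(\ave{u}+\|\nabla u\|_2^2/(4\pi)\bigr)$ is available directly, and no Gronwall argument for $\int_\Om e^{pw}\,dx$ is needed.
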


The threshold produced by the proof is explicit: it can be taken to satisfy
\begin{equation}\label{eq:intro-dyn-size}
 \eps_{\mathrm{dyn}}(R)
 \le C(1+R)^2
      \exp\left(\frac{R^2}{8\pi}\right)
 \qquad(R\ge1),
\end{equation}
where $C$ depends only on $a$ and $\Om$.

The third alternative is not vacuous: 
Corollary~\ref{cor:threshold-nontrivial} provides 
spatially nonconstant initial data whose solutions converge to $\xi_a$.
The nontrivial direction in \eqref{eq:intro-mean-crossing} is the
exclusion, for sufficiently large diffusion, of concentration-driven
blow-up while the spatial mean remains below $\xi_a$. Indeed, a
supercritical mean forces finite-time blow-up for every diffusivity by
Jensen's inequality. The converse is obtained by combining an a priori
energy barrier with quantitative contraction of spatial oscillations;
compactness and energy dissipation then identify the possible global
limits.

The largeness assumption on the diffusion is essential. 
Calanchi, Ciraolo, and Messina \cite{CCM} recently proved that the stationary Neumann problem
\[
  -\eps\Delta z=f(z)\quad\text{in }\Om,
  \qquad
  \partial_\nu z=0\quad\text{on }\partial\Om
\]
admits nonconstant positive solutions for all small diffusivities. 
Such stationary solutions sit outside the three alternatives of Theorem~\ref{thm:intro-phase}. 

To quantify the diffusion needed uniformly below the scalar threshold,
fix $\delta\in(0,\xi_a)$ and define
\[
 \mathcal A(R,\delta)
 =\left\{u_0\in H^1(\Om):
 \|u_0\|_{H^1}\le R,\ \ave{u_0}\le\xi_a-\delta\right\}.
\]

We say that a diffusivity $\eps>0$ \emph{stabilizes} $\mathcal A(R,\delta)$ if, for every
$u_0\in\mathcal A(R,\delta)$, the solution of \eqref{eq:P} is global and converges to $0$
in $H^1(\Om)$ and in $L^\infty(\Om)$. Since this property is inherited by all larger
diffusivities, we may set
\begin{equation}\label{eq:eps-unif}
 \eps_{\mathrm{unif}}(R,\delta)
 =\inf\bigl\{\eps_0>0:\ \text{every }\eps>\eps_0\text{ stabilizes }\mathcal A(R,\delta)\bigr\}.
\end{equation}

\begin{theorem}
\label{thm:intro-main}
Fix $\delta\in(0,\xi_a)$. Then
\[
 0<\eps_{\mathrm{unif}}(R,\delta)<\infty
 \qquad(R>0).
\]
Moreover, there exist $c_\delta,C_\delta,R_\delta>0$, depending only on
$a$, $\Om$, and $\delta$, such that
\begin{equation}\label{eq:intro-two-sided}
 c_\delta R^{-2}\exp\left(\frac{R^2}{8\pi}\right)
 \le \eps_{\mathrm{unif}}(R,\delta)
 \le C_\delta(1+R)^3\exp\left(\frac{R^2}{8\pi}\right)
\end{equation}
for $R\ge R_\delta$. Consequently,
\[
 \log\eps_{\mathrm{unif}}(R,\delta)
 =\frac{R^2}{8\pi}+O_{a,\Om,\delta}(\log R)
 \qquad(R\to\infty).
\]
\end{theorem}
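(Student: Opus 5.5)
The statement has two halves: an upper bound on $\eps_{\mathrm{unif}}(R,\delta)$, which I would extract from the proof of Theorem~\ref{thm:intro-phase}, and a lower bound, which needs an explicit destabilizing family. Finiteness then follows from the upper bound. Positivity is immediate from the lower bound for large $R$, and for small $R$ I would argue via the Calanchi--Ciraolo--Messina nonconstant stationary solutions, or via a concentrating datum that blows up for small $\eps$. Monotonicity of the stabilization property in $\eps$ is built into the definition \eqref{eq:eps-unif}, so it suffices to show two things. First, every $\eps\ge C_\delta(1+R)^3e^{R^2/(8\pi)}$ stabilizes $\mathcal A(R,\delta)$. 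Second, for some $\eps\ge c_\delta R^{-2}e^{R^2/(8\pi)}$ there is a datum in $\mathcal A(R,\delta)$ whose solution fails to decay.

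\emph{Upper bound.} Split $u=\ave{u}+w$ with $\ave{w}=0$. The sharp mean-zero Moser--Trudinger inequality on a smooth planar domain (constant $8\pi$, reflecting boundary concentration) gives
\[
\int_\Om e^{w}\le C\exp\Bigl(\tfrac{\|\nabla w\|_2^2}{16\pi}\Bigr).
\]
By Jensen, $\int e^{2w}$ is then at most $C\exp(\|\nabla w\|^2/(4\pi))$. To get the exponent $R^2/(8\pi)$ I would instead control $\int e^{w}$ itself and use H\"older only against small powers. The oscillation equation reads $w_t-\eps\Delta w=f(u)-\ave{f(u)}$. Testing with $-\Delta w$ and using the Poincar\'e gap $\eps\lambda_1$, the forcing $\|f(u)\|$ has to be controlled by a barrier of size roughly $e^{R^2/(8\pi)}$ during an initial layer of length $O(1/\eps)$. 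Over that layer $\|\nabla w\|$ cannot grow by more than $O(1)$, and after it $\|\nabla w\|$ decays like $e^{-\eps\lambda_1 t}$, at which point the mean obeys an ODE close to \eqref{eq:ODE}. I would run the mean equation
\[
\ave{u}'=\ave{f(u)}=f(\ave u)+O(\text{oscillation}),
\]
using $\ave{f(u)}\le f(\ave u)+e^{\ave u}\bigl(\ave{e^w}-1\bigr)$. The requirement is that the excess the mean gains during the layer, of size about $\eps^{-1}e^{R^2/(8\pi)}$ times polynomial factors in $R$, stays below $\delta/2$. This yields $\eps\gtrsim\delta^{-1}(1+R)^3e^{R^2/(8\pi)}$. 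After the layer, the phase-portrait part of Theorem~\ref{thm:intro-phase} gives convergence to $0$, since the mean stays at most $\xi_a-\delta/2$. Much of this already appears in the proof of \eqref{eq:intro-dyn-size}; the extra power of $R$ accounts for uniformity in $\delta$.

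\emph{Lower bound.} This is the main obstacle. Take a boundary point $x_0$ and a Moser profile $m_r$ concentrated at scale $r$ at $x_0$. It is logarithmic, of height about $\sqrt{\log(1/r)/\pi}\cdot$ const, normalized so that $\|\nabla m_r\|_2^2\approx R^2$; at the boundary the half-disc halves the Dirichlet energy, which is what produces $8\pi$. Subtract its mean and add the constant $\xi_a-\delta$. The $L^2$ part is $o(1)$, so the datum lies in $\mathcal A(R,\delta)$ after minor rescaling. The peak value is $\approx R^2/(8\pi)\cdot$(the right constant), so that $e^{u_0}$ near $x_0$ is of order $e^{R^2/(8\pi)}$ on a set of size $r^2$. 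For the localized Kaplan argument, take the first Neumann-ish eigenfunction $\phi$ of a small half-disc $B_\rho(x_0)\cap\Om$ with Dirichlet condition on the curved part, and set $\Phi(t)=\int u\phi$. Jensen gives $\Phi'\ge f(\Phi)-\eps\mu_\rho\Phi-\text{(boundary flux term)}$. The flux term has a favorable sign if $u$ stays above a subsolution level on $\partial B_\rho$, which I would secure by comparison with a spatially constant subsolution. Blow-up happens if $f(\Phi(0))>\eps\mu_\rho\Phi(0)$, roughly $e^{\Phi(0)}\gtrsim\eps\rho^{-2}\Phi(0)$. Choosing $\rho$ so that $\Phi(0)\approx R^2/(8\pi)-O(\log R)$ for a profile with $\rho\sim R^{-1}$ gives a contradiction with stabilization for all $\eps\le c_\delta R^{-2}e^{R^2/(8\pi)}$. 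The delicate step is tuning $\rho$ against the logarithmic profile so that the loss is only polynomial in $R$. Finally, taking logarithms of \eqref{eq:intro-two-sided} gives the asymptotic formula.
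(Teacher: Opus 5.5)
Your overall structure (a sharp upper bound, a boundary-concentration lower bound, and a separate positivity argument at small $R$) matches the paper. However, both quantitative halves have genuine gaps, and the lower-bound one is a wrong scaling rather than a missing detail.

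\textbf{Lower bound: the Kaplan radius is at the wrong scale.} A Kaplan radius $\rho\sim R^{-1}$ with $\Phi(0)\approx R^2/(8\pi)$ is incompatible with $\|\nabla u_0\|_2\le R$. Take any probability weight $\phi$ supported in a boundary half-disc of radius $\rho$ with $\|\phi\|_\infty\lesssim\rho^{-2}$, and set $\ell=\log(1/\rho)$. Jensen and \eqref{eq:expint} give $\beta\Phi(0)\le\beta\ave{u_0}+2\ell+\beta^2R^2/(8\pi)+C$ for every $\beta>0$. Optimizing in $\beta$,
\[
 \Phi(0)\le\ave{u_0}+R\sqrt{\ell/\pi}+O(R/\sqrt{\ell}).
\]
For $\ell\approx\log R$ this is $O(R\sqrt{\log R})$. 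Your Kaplan condition $\eps\lesssim\rho^2e^{\Phi(0)}/\Phi(0)$ then only yields thresholds of size $e^{o(R^2)}$.

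The same inequality shows that $\Phi(0)-2\ell$ can reach $R^2/(8\pi)+O(1)$ only for $\ell\asymp R^2$, with optimum at $\ell=R^2/(16\pi)$. So the Kaplan radius must be the exponentially small core radius $\rho=re^{-R^2/(16\pi)}$ of the Moser profile itself. On that core the datum equals its plateau height $h\approx R^2/(4\pi)$, not $R^2/(8\pi)$. Your picture of ``$e^{u_0}\approx e^{R^2/(8\pi)}$ on a set of size $r^2$'' conflates the net exponent $h-2L$ with the peak height.

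This is exactly the paper's choice: $L_R=R^2/(16\pi)$, $h_R=R/\|\psi_{L_R}\|_{H^1}=R^2/(4\pi)+O(1)$, $\rho_R=re^{-L_R}$. Lemma~\ref{lem:localized-Kaplan} then gives blow-up for $\eps\le r^2e^{h_R-2L_R}/(8\kappa h_R)\asymp R^{-2}e^{R^2/(8\pi)}$. Your mixed Dirichlet--Neumann eigenfunction, with the flux on the arc controlled by a constant subsolution, is an acceptable substitute for the cutoff of Lemma~\ref{lem:boundary-cutoff}; only the scales are wrong.

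\textbf{Upper bound: wrong constant and a mechanism that cannot reach the sharp exponent.}
\begin{enumerate}
\item Your displayed inequality $\int_\Om e^w\le C\exp(\|\nabla w\|_2^2/(16\pi))$ is false for mean-zero Neumann functions. The correct coefficient is $1/(8\pi)$, from \eqref{eq:expint} with $\beta=1$, and the profiles $h\psi_L$ saturate it. Used literally, it would give an $e^{R^2/(16\pi)}$ upper bound contradicting the lower bound.
\item More seriously, you control the gradient by testing the oscillation equation with $-\Delta w$. After Young's inequality this requires a uniform bound on $\|f(u)\|_2^2/\eps$. On the $H^1$ ball, $\|e^u\|_2$ is genuinely of order $e^{R^2/(4\pi)}$ (take $h\psi_L$ with $L=R^2/(4\pi)$). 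This route therefore gives at best $\eps\gtrsim e^{R^2/(4\pi)}$, twice the sharp exponent, and it has no way to use the $L^q$, $q\downarrow1$, control you allude to.
\item The missing ingredient is the paper's Duhamel bootstrap on the fast time scale $s=\eps t$. It uses the smoothing bound \eqref{eq:Lq-semigroup}, uniform in $q\in(1,2]$, with $q_R=1+R^{-2}$. The time singularity then costs only $q_R/(q_R-1)=R^2+1$, while the Moser--Trudinger factor $\exp(q_RD_R/(8\pi))$ stays $O(e^{R^2/(8\pi)})$. The Lyapunov barrier of Lemma~\ref{lem:energy-barrier}, run under a bootstrap on the mean, would also control the gradient at the sharp exponential cost.
\item The bootstrap may let $\|\nabla u\|_2$ grow only by $d_R=\sqrt{R^2+1}-R\asymp R^{-1}$. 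Growth by $O(1)$, as you allow, shifts the exponent by $O(R)$. The two factors $R^2$ (from $q_R/(q_R-1)$) and $R$ (from $d_R^{-1}$) are what produce $(1+R)^3$; it does not come from uniformity in $\delta$.
\item The $L^q\to L^\infty$ form of the same estimate, run for the homogenization time $S_{R,\delta}\sim\log(R/\delta)$ rather than $O(1)$, is what places $u$ pointwise below $\xi_a-\delta/2$, so that Lemma~\ref{lem:invariance} applies. Your assertion that the mean stays below $\xi_a-\delta/2$ after the layer needs such an argument.
\end{enumerate}

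\textbf{Positivity at small $R$.} The Calanchi--Ciraolo--Messina solutions come with no control of the $H^1$ norm or the mean, so they need not lie in $\mathcal A(R,\delta)$. Your second option, fixed height $h$ with $L\to\infty$, is the one that works, and it is what the paper does.
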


The coefficient $1/(8\pi)$ is determined by the
Moser--Trudinger inequality. 
For mean-zero functions, the sharp coefficient is $2\pi$
\cite{Cianchi,YangMeanZero}, rather than the classical
Dirichlet coefficient $4\pi$ \cite{Moser}. As a consequence, for
$q\ge1$,
\[
 \|e^u\|_{q}
 \le C(\Om,q)
 \exp\left(
   \ave{u}+\frac{q}{8\pi}\|\nabla u\|_2^2
 \right).
\]
This estimate gives the upper exponential scale in
\eqref{eq:intro-two-sided}. The matching lower bound comes from
boundary-concentrating Moser profiles: the Neumann condition allows
concentration at the boundary, and a localized variant of 
Kaplan's eigenfunction method \cite{Kaplan} converts this concentration 
into finite-time blow-up.
Thus the exponential cost is intrinsic to the dynamics rather than an
artifact of the upper-bound argument.

We will see in Corollary~\ref{cor:gap} that the fixed gap $\delta>0$ cannot be removed:
for $R>\xi_a|\Om|^{1/2}$ one has $\eps_{\mathrm{unif}}(R,\delta)\to\infty$ as $\delta\downarrow0$.

\subsection{Relation to previous work}
\label{sec:intro-context}

Large-diffusion reduction of reaction--diffusion equations has a long
history. Conway, Hoff, and Smoller \cite{ConwayHoffSmoller} obtained 
exponential homogenization toward the associated ODE dynamics 
under a bounded invariant-region hypothesis. 
Related large-diffusivity ODE and shadow-system reductions were developed in \cite{HaleLarge,HaleSakamoto}.
The program was subsequently extended to settings with dispersion and to
parabolic or infinite-dimensional dynamics governed by finite-dimensional
ODEs \cite{CarvalhoODE,CarvalhoHale,CarvalhoPereira}.
Quantitative convergence of attractors in large-diffusion regimes was
obtained in \cite{CarvalhoPires,PiresSamprogna}. In a different direction,
Cupps, Morgan, and Tang \cite{CuppsMorganTang} proved global uniform
boundedness for mass-dissipative reaction--diffusion systems when diffusion
is sufficiently large relative to the initial data. 
For a broader survey on the role of diffusivity in inducing or inhibiting blow-up, 
see Fila and Ninomiya \cite{FilaNinomiya}.

This classical theory largely concerns globally defined dissipative dynamics, 
whereas \eqref{eq:P} also contains finite-time blow-up trajectories. 
Ishige and coauthors \cite{IshigeLargeTime,IshigeMizoguchi,IshigeYagisita} analyzed blow-up times, sets, and profiles for semilinear heat equations with large diffusion. 
Our focus is complementary: we seek a single diffusion regime 
that classifies an entire bounded family while retaining the blow-up branch.

Threshold and borderline dynamics for superlinear heat equations provide
a second comparison; see \cite{QuittnerSouplet} for general background.
For bounded-domain Dirichlet problems, Ni, Sacks, and Tavantzis
\cite{NiSacksTavantzis} analyzed dynamics separating decay from blow-up.
Under certain growth assumptions, every positive ray meets an orbit converging
to a nonzero unstable equilibrium; in a special setting with a unique
positive equilibrium, this yields a decay--equilibrium--blow-up transition.
Related one-parameter threshold asymptotics for whole-space power equations
were studied by Pol\'a\v{c}ik and Quittner
\cite{PolacikQuittnerThreshold} along families $u_0=\alpha\phi$, while
Quittner and Souplet \cite{QuittnerSoupletThreshold} recently analyzed
threshold and subthreshold solutions for bounded-domain Cauchy--Dirichlet
problems and a modified threshold notion in the whole space.
The relevant analogy is the decay--threshold--blow-up structure. 
Here the threshold state is the constant equilibrium $\xi_a$, 
and the classification is sought on whole bounded $H^1(\Om)$ families
rather than along a prescribed one-parameter family.

Exponential reaction--diffusion equations have a separate classical
literature. Fujita \cite{Fujita1969} treated the bounded-domain Dirichlet
problem with exponential reaction, and blow-up for the solid-fuel ignition
model was analyzed in \cite{BebernesBressanEberly}; V\'azquez
\cite{VazquezExp} studied existence and blow-up for the
exponential reaction--diffusion equation. For the pure exponential source
in the whole space, Fujishima \cite{FujishimaExp} obtained an optimal
condition on the decay of the initial data toward $-\infty$, separating
global existence from blow-up. For \eqref{eq:P}, the exponential nonlinearity affects 
the large-diffusion problem through the sharp mean-zero Moser--Trudinger inequality, 
which determines the worst-case cost of controlling concentration on an $H^1$ ball.

Finally, the stationary Neumann problem provides an elliptic counterpart.
Stable nonconstant solutions on convex domains were ruled out in
\cite{CastenHolland,Matano}, while classical large-diffusion nonexistence
results under suitable structural assumptions were obtained by Ni and
Takagi \cite{NiTakagiLarge}. At the small-diffusion end, boundary
concentration and peak location of least-energy solutions were analyzed in
\cite{NT1,NT2}. For the exponential reaction considered here, Calanchi, Ciraolo,
and Messina \cite{CCM} obtain nonconstant positive solutions for
sufficiently small diffusion and conjecture complementary large-diffusion
rigidity; the companion elliptic paper \cite{Seo} proves that rigidity by
a separate elliptic argument. These elliptic results describe the
diffusion-dependent stationary picture, whereas our concern is the
parabolic dynamics in the presence of finite-time blow-up.

\subsection{Strategy of the proof and organization of the paper}
\label{sec:strategy}

\emph{Upper bound.} After the rescaling $s=\eps t$ the reaction has size
$\eps^{-1}$, so on the diffusive time scale the equation is a small
perturbation of the Neumann heat flow. A bootstrap keeps the gradient and
the spatial mean under control long enough for the linear flow to homogenize
the datum; the solution then enters a pointwise strip strictly below
$\xi_a$, where scalar comparison forces exponential decay. The exponential
cost enters through the Moser--Trudinger factor
$\exp\bigl(q\|\nabla u\|_2^2/(8\pi)\bigr)$, and the sharp coefficient is
recovered by letting $q\downarrow1$ at the rate $R^{-2}$.

\emph{Trichotomy.} A mean above $\xi_a$ forces blow-up for every diffusivity
by Jensen's inequality. If the mean never crosses $\xi_a$, an energy barrier
confines the trajectory to $\|\nabla u\|_2^2<R^2+1$; on that region a
quantitative contraction of spatial oscillations makes every
$\omega$-limit point spatially constant, and energy dissipation makes it
stationary. Only $0$ and $\xi_a$ remain.

\emph{Lower bound.} Boundary-concentrating Moser profiles are inserted into
a localized version of Kaplan's eigenfunction method, and the competition
between height and concentration scale is optimized.

The paper is organized as follows. Section~\ref{sec:prelim} develops the
analytical framework. Section~\ref{sec:stab} derives the mean criterion, 
proves uniform stabilization by fast-time homogenization, and records the invariance of the
subthreshold strip. Section~\ref{sec:trichotomy} establishes the
uniform trichotomy, Section~\ref{sec:sharp} identifies the sharp
stabilization scale, and Section~\ref{sec:conclusion} concludes with
extensions and open problems.

\section{Analytical framework}\label{sec:prelim}

We collect the analytical tools used throughout the paper. The two estimates
that determine the critical large-diffusion scale are the Neumann smoothing bound
uniform as $q\downarrow1$ and the endpoint mean-zero Moser--Trudinger inequality. 
The remaining results provide the local solution theory, comparison principle,
and energy-dissipation structure needed to convert these estimates into
dynamical statements.
Unless otherwise stated, $C$ denotes a positive
constant that may depend on $a$ and $\Om$, may change from line to line, and
is independent of $u$, $\eps$, $R$, $\delta$, and the Lebesgue exponent.
Named constants are fixed when they are introduced. We write $u^\perp=u-\ave{u}$ and 
$\|\cdot\|_p=\|\cdot\|_{L^p(\Om)}$.

The Neumann heat semigroup is denoted by $e^{t\Delta_N}$, and
$0=\lambda_1<\lambda_2\le\cdots$ are the eigenvalues of $-\Delta_N$. In particular,
$\|u^\perp\|_2^2\le \lambda_2^{-1}\|\nabla u\|_2^2$ holds for $u\in H^1(\Om)$.

\subsection{The scalar reaction}\label{ssec:scalar}

\begin{lemma}\label{lem:scalar}
Let $f(s)=e^s-1-as$, where $a>1$.
Then $f$ is strictly convex and has precisely two zeros, $0$ and
$\xi_a>\log a$. In addition,
\[
 f>0\quad\text{on }(-\infty,0)\cup(\xi_a,\infty),
 \qquad
 f<0\quad\text{on }(0,\xi_a),
\]
and $\int_c^\infty ds/f(s)<\infty$ for $c>\xi_a$.
For $M<\xi_a$, define
\[
 c_M=
 \begin{cases}
  a-1,&M\le0,\\
  \displaystyle a-\frac{e^M-1}{M},&0<M<\xi_a.
 \end{cases}
\]
Then $c_M>0$ and
\begin{equation}\label{eq:sfs}
 sf(s)\le -c_Ms^2
 \qquad(s\le M).
\end{equation}
\end{lemma}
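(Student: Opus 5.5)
The argument is elementary one-variable calculus built on convexity. First, $f''(s)=e^s>0$, so $f$ is strictly convex and has at most two zeros. Clearly $f(0)=0$ and $f'(0)=1-a<0$, so $f<0$ just to the right of $0$. Since $f(s)\to\infty$ as $s\to\infty$, there is a second zero $\xi_a>0$, and strict convexity rules out any further zeros. The sign pattern follows because a strictly convex function with two zeros is negative strictly between them and positive outside.

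For $\xi_a>\log a$, note that $f'$ vanishes only at $\log a$, where $f$ attains its minimum and $f(\log a)<0$. Hence $f$ is increasing on $(\log a,\infty)$, which forces $\xi_a>\log a$.

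For the integral, fix $c>\xi_a$. Then $f\ge f(c)>0$ on $[c,\infty)$ by monotonicity. Moreover $f(s)\ge e^s/2$ for $s$ large, so $\int_c^\infty ds/f(s)<\infty$.

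For \eqref{eq:sfs}, write $g(s)=f(s)/s=(e^s-1)/s-a$ for $s\ne0$, extended by $g(0)=1-a$. Then $sf(s)=s^2g(s)$, so it suffices to show $g(s)\le -c_M$ for $s\le M$. The function $h(s)=(e^s-1)/s=\int_0^1 e^{ts}\,dt$ is smooth and strictly increasing, since $h'(s)=\int_0^1 te^{ts}\,dt>0$. Hence $g$ is increasing, and $\sup_{s\le M}g(s)=g(M)$.

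We then split into two cases.
\begin{itemize}
\item If $0<M<\xi_a$, then $g(M)=f(M)/M<0$ because $f<0$ on $(0,\xi_a)$. This gives $c_M=a-(e^M-1)/M=-g(M)>0$, and the bound holds with equality at $s=M$.
\item If $M\le0$, then for $s\le M\le0$ we have $g(s)\le g(0)=1-a=-(a-1)$, so $c_M=a-1>0$ works, including at $s=0$, where both sides vanish.
\end{itemize}

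The only step requiring any care is the monotonicity of $(e^s-1)/s$ across $s=0$. The integral representation handles this cleanly, so I expect no real obstacle.
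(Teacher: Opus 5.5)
Your proof is correct and follows essentially the same route as the paper: strict convexity with $f(0)=0$, $f'(0)<0$ for the zeros and the sign pattern, $f(\log a)<0$ for $\xi_a>\log a$, and monotonicity of $(e^s-1)/s$ for \eqref{eq:sfs}. The one difference is that you apply this monotonicity on all of $(-\infty,M]$ through the representation $\int_0^1 e^{ts}\,dt$. The paper instead handles $s\le0$ separately with $e^s\ge1+s$; when $M>0$, that step tacitly needs $c_M\le a-1$, which your unified argument gives automatically.
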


\begin{proof}
Strict convexity, $f(0)=0$, $f'(0)=1-a<0$, and $f(s)\to\infty$ as $s\to\infty$ give exactly
one positive zero.  Since
$f(\log a)=a-1-a\log a<0$, this zero satisfies $\xi_a>\log a$; the sign
properties and the Osgood integral then follow immediately.  If $s\le0$,
$e^s\ge1+s$ gives $sf(s)\le-(a-1)s^2$.  If $0<s\le M<\xi_a$, monotonicity
of $(e^s-1)/s$ gives $f(s)/s\le-c_M$, proving
\eqref{eq:sfs}.
\end{proof}

\subsection{Neumann smoothing}\label{ssec:smoothing}

\begin{lemma}\label{lem:uniform-smoothing}
There exists $C_{\mathrm{sm}}\ge1$ such that, for every $q\in[1,2]$ and
$t>0$,
\begin{equation}\label{eq:Lq-semigroup}
 \|e^{t\Delta_N}g\|_\infty
 +\|\nabla e^{t\Delta_N}g\|_2
 \le C_{\mathrm{sm}}(1+t^{-1/q})\|g\|_q,
\end{equation}
where $C_{\mathrm{sm}}$ is independent of $q$. In particular,
\begin{equation}\label{eq:semigroup}
 \|e^{t\Delta_N}g\|_\infty
 +\|e^{t\Delta_N}g\|_{H^1}
 \le C_{\mathrm{sm}}(1+t^{-1/2})\|g\|_2
 \qquad(t>0),
\end{equation}
and
\begin{equation}\label{eq:gradient-contractivity}
 \|\nabla e^{t\Delta_N}u\|_2\le\|\nabla u\|_2
 \qquad(u\in H^1(\Om)).
\end{equation}
\end{lemma}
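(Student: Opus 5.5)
I will prove Lemma~\ref{lem:uniform-smoothing} from the Gaussian bound for the Neumann heat kernel on a smooth bounded planar domain, together with the spectral gap $\lambda_2>0$. The standard kernel estimate is
\[
 0\le p_t(x,y)\le C\,t^{-1}e^{-|x-y|^2/(Ct)}\qquad(0<t\le1).
\]
Mass conservation and positivity give $\|e^{t\Delta_N}\|_{\infty\to\infty}=\|e^{t\Delta_N}\|_{1\to1}=1$.

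\emph{$L^\infty$ bound.} For $0<t\le1$ the kernel bound gives $\|e^{t\Delta_N}\|_{1\to\infty}\le Ct^{-1}$. Riesz--Thorin interpolation between $L^1\to L^\infty$ and $L^\infty\to L^\infty$ then yields
\[
 \|e^{t\Delta_N}\|_{q\to\infty}\le (Ct^{-1})^{1/q}\le \max(C,1)\,t^{-1/q}
\]
for every $q\in[1,2]$. The interpolation constant equals one, so this bound is uniform in $q$. For $t>1$ I use the semigroup property and contractivity on $L^\infty$:
\[
 \|e^{t\Delta_N}g\|_\infty\le\|e^{\Delta_N}g\|_\infty\le C\|g\|_q.
\]
Together these give $C(1+t^{-1/q})\|g\|_q$.

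\emph{Gradient bound.} This is the step I expect to be the main obstacle, because a direct $L^q\to\dot H^1$ bound uniform in $q$ is less standard. I would factor the semigroup at time $t/2$ and prove two estimates.

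First, the spectral calculus gives
\[
 \|\nabla e^{s\Delta_N}h\|_2^2=\sum_k\lambda_k e^{-2s\lambda_k}|\hat h_k|^2\le (2es)^{-1}\|h\|_2^2 .
\]
For large $s$ the same expansion gives the sharper bound $\le \lambda_2e^{-2s\lambda_2}\|h\|_2^2$.

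Second, I need the $L^q\to L^2$ smoothing
\[
 \|e^{(t/2)\Delta_N}g\|_2\le C(1+t^{-(1/q-1/2)})\|g\|_q .
\]
This follows by interpolating the kernel bound $\|e^{s\Delta_N}\|_{1\to2}\le Cs^{-1/2}$, obtained from $\|p_s(x,\cdot)\|_2\le Cs^{-1/2}$, with $L^2$ contractivity; again the interpolation constant is uniform in $q$.

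Multiplying the two estimates gives the factor $t^{-1/2}\,t^{-(1/q-1/2)}=t^{-1/q}$ for $t\le1$. For $t\ge1$ the bound is simply $\le C\|g\|_q$, using $\|g\|_2$ control of $e^{(t/2)\Delta_N}g$ via $L^q\to L^2$ at time $1/2$. All constants are independent of $q\in[1,2]$, which establishes \eqref{eq:Lq-semigroup}.

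\emph{Consequences.} Taking $q=2$ and adding $\|e^{t\Delta_N}g\|_2\le\|g\|_2$ gives \eqref{eq:semigroup}. For \eqref{eq:gradient-contractivity}, the eigenfunction expansion gives
\[
 \|\nabla e^{t\Delta_N}u\|_2^2=\sum_k\lambda_ke^{-2t\lambda_k}|\hat u_k|^2\le\sum_k\lambda_k|\hat u_k|^2=\|\nabla u\|_2^2
\]
for $u\in H^1(\Om)$, since the quadratic form of $-\Delta_N$ is $\|\nabla u\|_2^2$. Equivalently, $\frac{d}{dt}\|\nabla e^{t\Delta_N}u\|_2^2=-2\|\Delta e^{t\Delta_N}u\|_2^2\le0$.

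Finally, I would set $C_{\mathrm{sm}}$ to be the maximum of the resulting constants and $1$.
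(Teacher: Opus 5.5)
Your proposal is correct and follows essentially the same route as the paper. Both arguments use Gaussian Neumann kernel bounds, uniform-in-$q$ interpolation for the $L^q\to L^\infty$ and $L^q\to L^2$ estimates, factoring at time $t/2$ for the gradient, and the spectral resolution for the $q=2$ case and for gradient contractivity. The differences are cosmetic: you interpolate $L^1\to L^\infty$ against $L^\infty$ contractivity rather than against the $L^2\to L^\infty$ bound, and you treat $t\ge1$ explicitly through the semigroup property.
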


\begin{proof}
Gaussian Neumann heat-kernel bounds, duality, and the spectral theorem give
\[
 \|e^{t\Delta_N}\|_{1\to\infty}\le C(1+t^{-1}),\quad
 \|e^{t\Delta_N}\|_{2\to\infty}\le C(1+t^{-1/2}),
\]
\[
 \|e^{t\Delta_N}\|_{1\to2}\le C(1+t^{-1/2}),\quad
 \|\nabla e^{t\Delta_N}\|_{2\to2}\le Ct^{-1/2};
\]
see \cite[Chapter~6]{Ouhabaz}.  Interpolation of the first pair yields
$\|e^{t\Delta_N}\|_{q\to\infty}\le C(1+t^{-1/q})$ with $C$ uniform for
$q\in[1,2]$.  For the gradient, factor the semigroup at time $t/2$ and
interpolate $e^{(t/2)\Delta_N}:L^q\to L^2$ between $L^1\to L^2$ and
$L^2$ contractivity.  The resulting time factor is $O(t^{-1/q})$ for
$t\le1$ and $O(1)$ for $t\ge1$, again uniformly in $q$.  This proves
\eqref{eq:Lq-semigroup}.  The remaining assertions follow from the case
$q=2$, $L^2$ contractivity, and the Neumann spectral resolution.
\end{proof}

\subsection{Moser--Trudinger estimates}\label{ssec:MT}

We use the sharp Moser--Trudinger inequality for mean-zero functions on
bounded smooth planar domains; see, for example, \cite{ChangYang,Cianchi,YangMeanZero}. There exists
$C_{\mathrm{MT}}=C_{\mathrm{MT}}(\Om)\ge1$ such that
\begin{equation}\label{eq:MT}
 \frac1{|\Om|}\int_\Om
 \exp\left(2\pi\frac{v^2}{\|\nabla v\|_2^2}\right)\,dx
 \le C_{\mathrm{MT}}
\end{equation}
whenever $v\in H^1(\Om)$, $\ave{v}=0$, and $v\not\equiv0$.

\begin{lemma}\label{lem:expint}
If $v\in H^1(\Om)$ and $\ave{v}=0$, then for every $\beta>0$,
\begin{equation}\label{eq:expint}
 \int_\Om e^{\beta|v|}\,dx
 \le C_{\mathrm{MT}}|\Om|
 \exp\left(\frac{\beta^2}{8\pi}\|\nabla v\|_2^2\right).
\end{equation}
Consequently, for $q\in[1,\infty)$ and $u\in H^1(\Om)$,
\begin{equation}\label{eq:Lq}
 \|e^u\|_{q}
 \le(C_{\mathrm{MT}}|\Om|)^{1/q}
 \exp\left(\ave{u}+\frac q{8\pi}\|\nabla u\|_2^2\right).
\end{equation}
\end{lemma}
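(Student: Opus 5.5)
The plan is to derive \eqref{eq:expint} from the mean-zero Moser--Trudinger inequality \eqref{eq:MT} with an elementary Young-type inequality, and then obtain \eqref{eq:Lq} by splitting off the mean. For \eqref{eq:expint} we may assume $v\not\equiv0$; if $v\equiv0$ the left side equals $|\Om|$ and the bound holds because $C_{\mathrm{MT}}\ge1$. Put $A=\|\nabla v\|_2^2$. Since $\ave{v}=0$ and $v\not\equiv0$, the Poincar\'e inequality gives $A>0$. Completing the square yields
\[
 \beta|v|\le 2\pi\frac{v^2}{A}+\frac{\beta^2A}{8\pi}.
\]
Indeed, the difference of the two sides is
\[
 \frac{2\pi}{A}\Bigl(|v|-\frac{\beta A}{4\pi}\Bigr)^2\ge0 .
\]
Exponentiating, integrating over $\Om$, and applying \eqref{eq:MT} then gives
\[
 \int_\Om e^{\beta|v|}\,dx\le e^{\beta^2A/(8\pi)}\int_\Om e^{2\pi v^2/A}\,dx\le C_{\mathrm{MT}}|\Om|\,e^{\beta^2A/(8\pi)},
\]
which is \eqref{eq:expint}.

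For \eqref{eq:Lq}, write $u=\ave{u}+u^\perp$, where $\ave{u^\perp}=0$ and $\nabla u^\perp=\nabla u$. We have $e^{qu}=e^{q\ave{u}}e^{qu^\perp}\le e^{q\ave{u}}e^{q|u^\perp|}$. Applying \eqref{eq:expint} to $v=u^\perp$ with $\beta=q$ gives
\[
 \int_\Om e^{qu}\,dx\le e^{q\ave{u}}\,C_{\mathrm{MT}}|\Om|\exp\Bigl(\frac{q^2}{8\pi}\|\nabla u\|_2^2\Bigr).
\]
Taking the $q$-th root produces exactly $(C_{\mathrm{MT}}|\Om|)^{1/q}\exp\bigl(\ave{u}+\frac{q}{8\pi}\|\nabla u\|_2^2\bigr)$. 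This step is valid for every $q\in[1,\infty)$, indeed for every $q>0$.

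There is no real obstacle in this argument. The only point that needs care is choosing the constants in the Young inequality so that the quadratic term has coefficient exactly $2\pi/A$, which matches the sharp mean-zero constant in \eqref{eq:MT}. That choice is what produces the factor $\beta^2/(8\pi)$ and, after the root, the coefficient $q/(8\pi)$ that governs the exponential scale in Theorem~\ref{thm:intro-main}.
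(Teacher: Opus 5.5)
Your proposal is correct and follows the paper's proof essentially verbatim: the same completing-the-square bound $\beta|v|\le 2\pi v^2/\|\nabla v\|_2^2+\frac{\beta^2}{8\pi}\|\nabla v\|_2^2$ followed by \eqref{eq:MT}, then the splitting $u=\ave{u}+u^\perp$ with $\beta=q$ and a $q$-th root. Your explicit handling of $v\equiv0$ via $C_{\mathrm{MT}}\ge1$ and your verification of the square are correct details that the paper leaves implicit.
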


\begin{proof}
If $v\not\equiv0$, then
$\beta|v|\le 2\pi v^2/\|\nabla v\|_2^2
+\frac{\beta^2}{8\pi}\|\nabla v\|_2^2$.
Apply \eqref{eq:MT}. The case $v\equiv0$ is immediate. To obtain
\eqref{eq:Lq}, write $u=\ave{u}+u^\perp$, apply \eqref{eq:expint} with
$\beta=q$, and take the $q$-th root.
\end{proof}

\begin{lemma}\label{lem:Lr-gradient}
There exists $C>0$, depending only on $\Om$, such that
every $v\in H^1(\Om)$ with $\ave{v}=0$ satisfies
\[
 \|v\|_r^2
 \le Cr\|\nabla v\|_2^2
 \qquad(r\ge2).
\]
\end{lemma}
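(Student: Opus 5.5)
We derive Lemma~\ref{lem:Lr-gradient} from the exponential integrability in Lemma~\ref{lem:expint}, using a pointwise comparison between $|v|^r$ and $e^{\beta|v|}$. We may assume $v\not\equiv0$ and, by homogeneity of both sides in $v$, normalize $\|\nabla v\|_2=1$. It then suffices to show $\|v\|_r\le C\sqrt r$ for $r\ge2$.

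\emph{Step 1: pointwise bound.} For any $\beta>0$ and $s\ge0$, we have $s^r\le (r/(e\beta))^r e^{\beta s}$. This holds because the maximum of $s^r e^{-\beta s}$ over $s\ge0$ is attained at $s=r/\beta$.

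\emph{Step 2: integrate.} Apply Step 1 with $s=|v|$ and integrate over $\Om$, using \eqref{eq:expint}:
\[
 \int_\Om |v|^r\,dx\le \Bigl(\frac{r}{e\beta}\Bigr)^r C_{\mathrm{MT}}|\Om|\exp\Bigl(\frac{\beta^2}{8\pi}\Bigr).
\]

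\emph{Step 3: choose $\beta$.} Take $\beta=\sqrt{r}$, which balances the two factors. Taking $r$-th roots gives
\[
 \|v\|_r\le \frac{\sqrt r}{e}\,(C_{\mathrm{MT}}|\Om|)^{1/r}\exp\Bigl(\frac{1}{8\pi}\Bigr).
\]
Since $r\ge2$, the factor $(C_{\mathrm{MT}}|\Om|)^{1/r}$ is at most $\max\{1,C_{\mathrm{MT}}|\Om|\}^{1/2}$. Hence $\|v\|_r\le C\sqrt r$ with $C$ depending only on $\Om$. Squaring this bound and undoing the normalization yields $\|v\|_r^2\le Cr\|\nabla v\|_2^2$.

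There is no real obstacle. The only point requiring care is that every constant is uniform in $r$: the choice $\beta=\sqrt r$ makes the exponential factor $e^{\beta^2/(8\pi)}$ equal to $e^{r/(8\pi)}$, and its $r$-th root is the fixed constant $e^{1/(8\pi)}$. If desired, one can instead use the elementary bound $s^r\le r!\,\beta^{-r}e^{\beta s}$ together with $(r!)^{1/r}\le r$ for integer $r$, and then interpolate to non-integer $r$. The choice in Step 3 is cleaner and avoids this detour.
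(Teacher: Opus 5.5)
Your proof is correct, but it reaches the bound by a different, though closely related, route from the paper.

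The paper works directly with the Gaussian-type integral in \eqref{eq:MT}. It keeps only the $k$-th Taylor term of $\exp(2\pi v^2/\|\nabla v\|_2^2)$, which gives $\|v\|_{2k}^2\le (C_{\mathrm{MT}}|\Om|)^{1/k}(k!)^{1/k}(2\pi)^{-1}\|\nabla v\|_2^2\le Ck\|\nabla v\|_2^2$. It then reaches arbitrary real $r\ge2$ by taking $k=\lceil r/2\rceil$ and using the embedding $L^{2k}(\Om)\hookrightarrow L^r(\Om)$ on a set of finite measure.

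You instead start from the linear exponential bound \eqref{eq:expint} and apply the Chernoff-type pointwise estimate $s^r\le(r/(e\beta))^re^{\beta s}$ with $\beta\asymp\sqrt r$. This handles every real $r\ge2$ at once, with no rounding to even integers, no factorial estimate, and an explicit constant. The paper's version needs nothing beyond \eqref{eq:MT} itself, whereas yours relies on Lemma~\ref{lem:expint}. That is legitimate, since that lemma is proved independently of the present one.

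One small remark: $\beta=\sqrt r$ does not exactly balance the two factors. Under your normalization $\|\nabla v\|_2=1$, minimizing $r\log(r/(e\beta))+\beta^2/(8\pi)$ gives $\beta=\sqrt{4\pi r}$ and $\|v\|_r^2\le(C_{\mathrm{MT}}|\Om|)^{2/r}\,r\,\|\nabla v\|_2^2/(4\pi e)$. This is the same leading constant the paper's route yields via $(k!)^{1/k}\sim k/e$. Any choice $\beta\asymp\sqrt r$ suffices for the lemma, so this does not affect your argument.
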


\begin{proof}
For $v\not\equiv0$, retain the $k$-th term in the expansion of \eqref{eq:MT}:
\[
 \frac{(2\pi)^k}{k!\,\|\nabla v\|_2^{2k}}\int_\Om|v|^{2k}\,dx
 \le C_{\mathrm{MT}}|\Om|.
\]
Thus $\|v\|_{2k}^2\le Ck\|\nabla v\|_2^2$.  Given $r\ge2$, take
$k=\lceil r/2\rceil$ and use the embedding
$L^{2k}(\Om)\hookrightarrow L^r(\Om)$.  The case $v\equiv0$ is immediate.
\end{proof}

\begin{lemma}\label{lem:lip}
The map $u\mapsto e^u$ is locally Lipschitz from $H^1(\Om)$ to
$L^2(\Om)$. More precisely, for every $R>0$ there exists $C(R)>0$
such that
\[
 \|e^u-e^w\|_2
 \le C(R)\|u-w\|_{H^1}
\]
whenever $\|u\|_{H^1},\|w\|_{H^1}\le R$.
\end{lemma}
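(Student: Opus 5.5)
The plan is to use the mean value theorem pointwise and then control the resulting weight by Lemma~\ref{lem:expint}. For real numbers $s,\sigma$ we have
\[
 |e^s-e^\sigma|\le |s-\sigma|\bigl(e^{s}+e^{\sigma}\bigr),
\]
since the derivative of $\exp$ on the segment between $s$ and $\sigma$ is at most $\max(e^s,e^\sigma)$. Applying this with $s=u(x)$ and $\sigma=w(x)$ and using Hölder's inequality with exponents $4$ and $4$ gives
\[
 \|e^u-e^w\|_2\le \|u-w\|_4\bigl(\|e^u\|_4+\|e^w\|_4\bigr).
\]

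The two factors are then handled separately. For the difference, the Sobolev embedding $H^1(\Om)\hookrightarrow L^4(\Om)$ on a smooth bounded planar domain gives $\|u-w\|_4\le C\|u-w\|_{H^1}$. To phrase this with the paper's tools, write $u-w=\ave{u-w}+(u-w)^\perp$. Lemma~\ref{lem:Lr-gradient} with $r=4$ bounds $\|(u-w)^\perp\|_4$ by $C\|\nabla(u-w)\|_2$. The constant part is controlled by $|\ave{u-w}|\,|\Om|^{1/4}\le |\Om|^{-1/4}\|u-w\|_1\le C\|u-w\|_2$.

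For the exponential factor, I apply \eqref{eq:Lq} with $q=4$:
\[
 \|e^u\|_4\le (C_{\mathrm{MT}}|\Om|)^{1/4}\exp\Bigl(\ave{u}+\tfrac{1}{2\pi}\|\nabla u\|_2^2\Bigr).
\]
Since $|\ave{u}|\le|\Om|^{-1/2}\|u\|_2\le |\Om|^{-1/2}R$ and $\|\nabla u\|_2\le R$, the right side is at most $C\exp(C R+R^2/(2\pi))$, and the same bound holds for $w$. Combining the two factors yields the claim with
\[
 C(R)=C\exp\bigl(CR+R^2/(2\pi)\bigr).
\]

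There is no real obstacle in this argument. The only point needing care is that the exponential weight lies in $L^4$ uniformly on $H^1$ balls, and this is exactly what Lemma~\ref{lem:expint} provides. A sharp constant is not required, so any fixed Hölder split works.
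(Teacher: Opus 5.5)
Your proof is correct and is essentially the paper's argument: the pointwise mean-value bound, H\"older with exponents $4,4$, the embedding $H^1(\Om)\hookrightarrow L^4(\Om)$, and \eqref{eq:Lq} with $q=4$ to bound the exponential factors uniformly on the $H^1$ ball. One small slip: the intermediate bound in your mean estimate should be $|\ave{u-w}|\,|\Om|^{1/4}\le|\Om|^{-3/4}\|u-w\|_1$ rather than $|\Om|^{-1/4}\|u-w\|_1$, but your conclusion $\le C\|u-w\|_2$ still holds.
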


\begin{proof}
The pointwise mean-value estimate gives
$|e^u-e^w|\le |u-w|(e^u+e^w)$.
Therefore, by H\"older's inequality and the embedding
$H^1(\Om)\hookrightarrow L^4(\Om)$,
\[
 \|e^u-e^w\|_2
 \le \|u-w\|_4(\|e^u\|_4+\|e^w\|_4)
 \le C(\|e^u\|_4+\|e^w\|_4)\|u-w\|_{H^1}.
\]
The exponential factors are uniformly bounded on the $H^1$ ball by
\eqref{eq:Lq} with $q=4$.
\end{proof}

\subsection{Local well-posedness, regularization, and comparison}\label{ssec:lwp}

The initial phase space is $H^1(\Om)$ rather than $L^\infty(\Om)$.
Local $H^1$ theories for two-dimensional exponential heat equations have
been developed, for example, by Ibrahim--Jrad--Majdoub--Saanouni
\cite{IbrahimJradMajdoubSaanouni}.  In the present bounded Neumann setting,
the preceding exponential-integrability estimates make the nonlinearity
locally Lipschitz from $H^1$ to $L^2$, so standard analytic-semigroup theory
applies directly.  Positive-time smoothing then restores the pointwise
regularity needed for comparison and, later, for the phase-space arguments.

Let $A_\eps=I-\eps\Delta_N$. Then with equivalent norms,
\[
 \mathcal D(A_\eps^{1/2})=H^1(\Om),
 \quad
 \mathcal D(A_\eps)=H^2_N(\Om):=\{u\in H^2(\Om):\partial_\nu u=0\}.
\]
Writing $\mathcal G(u)=f(u)+u=e^u-1-(a-1)u$, problem \eqref{eq:P} takes the form
\[
 u_t+A_\eps u=\mathcal G(u).
\]
By Lemma~\ref{lem:lip}, $\mathcal G:H^1(\Om)\to L^2(\Om)$ is locally
Lipschitz.
Whenever the solution exists up to time $t$, we write
$\mathcal S_\eps(t)u_0=u(t;u_0)$;
when $\eps$ is fixed, the subscript is omitted.

\begin{proposition}\label{prop:lwp}
For every $u_0\in H^1(\Om)$ and $\eps>0$, there is a unique maximal mild
solution $u\in C([0,\Tmax);H^1(\Om))$, which satisfies
\begin{equation}\label{eq:mild}
 u(t)=e^{\eps t\Delta_N}u_0
 +\int_0^t e^{\eps(t-s)\Delta_N}f(u(s))\,ds.
\end{equation}
For every $0<\tau<T<\Tmax$,
\begin{equation}\label{eq:positive-strong}
 u\in C([\tau,T];H_N^2(\Om))\cap C^1([\tau,T];L^2(\Om)).
\end{equation}
Solutions depend continuously on their initial data in
$C([0,T];H^1(\Om))$ on every interval strictly below the maximal existence
time, and
\begin{align}
 \Tmax<\infty
 &\Longrightarrow\limsup_{t\uparrow\Tmax}\|u(t)\|_{H^1}=\infty,
 \label{eq:blowup-alt}\\
 \Tmax<\infty
 &\Longrightarrow\limsup_{t\uparrow\Tmax}\|u(t)\|_\infty=\infty.
 \label{eq:Linf-continuation}
\end{align}
Moreover, if $0<T<\Tmax(u_0)$, then on a sufficiently small $H^1$
neighborhood $U$ of $u_0$ the time-$T$ map
$\mathcal S_\eps(T):U\to H^1(\Om)\cap L^\infty(\Om)$
is continuous,
where the target is endowed with the norm
$\|\cdot\|_{H^1}+\|\cdot\|_\infty$.
\end{proposition}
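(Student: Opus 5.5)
The plan is to treat \eqref{eq:P} as the abstract semilinear equation $u_t+A_\eps u=\mathcal G(u)$ in the Hilbert space $L^2(\Om)$. Here $A_\eps$ is sectorial with $\mathcal D(A_\eps^{1/2})=H^1(\Om)$, and $\mathcal G:H^1\to L^2$ is locally Lipschitz by Lemma~\ref{lem:lip}. This is the classical Henry/Pazy framework with $\alpha=1/2$. Local existence and uniqueness of a mild solution in $C([0,T];H^1)$ follow from a contraction argument in a ball of $C([0,T];H^1)$. The contraction uses
\[
 \|A_\eps^{1/2}e^{-tA_\eps}\|_{2\to2}\le C_\eps t^{-1/2},
\]
which is integrable near $0$, so $T$ depends only on an $H^1$ bound on $u_0$. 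Since $e^{-tA_\eps}=e^{t}e^{\eps t\Delta_N}$ and $\mathcal G(u)-u=f(u)$, the Duhamel formula for $A_\eps$ is equivalent to \eqref{eq:mild}. Concatenation gives the maximal solution. Because the existence time is uniform on $H^1$ balls, $\Tmax<\infty$ forces \eqref{eq:blowup-alt}. Continuous dependence on $[0,T]$, $T<\Tmax$, follows from a Gronwall-type (singular kernel) inequality for the difference of two mild solutions. This inequality uses the local Lipschitz constant $C(R)$ on a neighbourhood of the compact orbit $u([0,T])$, together with the standard argument that nearby solutions stay in a fixed $H^1$ ball up to time $T$.

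For the positive-time regularity \eqref{eq:positive-strong}, I will use the standard bootstrap. The function $t\mapsto \mathcal G(u(t))$ is continuous into $L^2$ on $[0,T]$. I will show it is Hölder continuous on $[\tau,T]$: the mild formula gives $u\in C^{\theta}([\tau',T];H^1)$ for $\theta<1/2$, because $A_\eps^{1/2+\theta}$-smoothing of $e^{-tA_\eps}u_0$ holds away from $0$, and the Duhamel term is Hölder with values in $\mathcal D(A_\eps^{\gamma})$ for $\gamma<1$. Composing with the Lipschitz map $\mathcal G$ keeps the Hölder continuity. Pazy's theorem for inhomogeneous problems with Hölder forcing then yields a classical solution with $u\in C([\tau,T];\mathcal D(A_\eps))\cap C^1([\tau,T];L^2)$, and $\mathcal D(A_\eps)=H^2_N$.

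The $L^\infty$ statements come from the smoothing estimate \eqref{eq:semigroup}. Writing $u(t)=e^{\eps(t-t_0)\Delta_N}u(t_0)+\int_{t_0}^t\cdots$, Lemma~\ref{lem:uniform-smoothing} gives
\[
 \|u(t)\|_\infty\le C(1+(\eps(t-t_0))^{-1/2})\|u(t_0)\|_2+\int_{t_0}^t C(1+(\eps(t-s))^{-1/2})\|f(u(s))\|_2\,ds.
\]
Alternatively, $H^2\hookrightarrow L^\infty$ in two dimensions. The continuity of the time-$T$ map into $H^1\cap L^\infty$ follows the same way. I pick $t_0=T/2$ and use continuous dependence in $C([0,T];H^1)$ together with Lemma~\ref{lem:lip}, which controls $\|f(u)-f(w)\|_2$. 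The $2\to\infty$ and $2\to H^1$ bounds in \eqref{eq:semigroup} then estimate $\|u(T)-w(T)\|_{H^1}+\|u(T)-w(T)\|_\infty$ by $C\sup_{[T/2,T]}\|u-w\|_{H^1}$, plus the smoothing of the difference $u(T/2)-w(T/2)$.

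The main obstacle is \eqref{eq:Linf-continuation}: an $L^\infty$ bound must be shown to prevent $H^1$ blow-up. Suppose $\|u(t)\|_\infty\le M$ on $[0,\Tmax)$, ignoring the initial instant, i.e.\ on $[\tau,\Tmax)$. Then $|f(u)|\le C(M)$ pointwise, so $\|f(u(s))\|_2\le C(M)|\Om|^{1/2}$. Applying the mild formula from time $\tau$, the $H^1$ part of \eqref{eq:semigroup} together with the integrability of $(t-s)^{-1/2}$ gives
\[
 \sup_{[2\tau,\Tmax)}\|u(t)\|_{H^1}\le C(\tau,\eps)\|u(\tau)\|_2+C(M,\eps)\bigl(\Tmax+\Tmax^{1/2}\bigr)<\infty,
\]
which contradicts \eqref{eq:blowup-alt}. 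The only care needed is the choice of $\tau>0$, so that $u(\tau)\in L^\infty$ by the regularity already established, and that $\limsup$ concerns only times near $\Tmax$.
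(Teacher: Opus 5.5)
Your proposal is correct and follows essentially the same route as the paper. Both use Henry's semilinear framework with $A_\eps=I-\eps\Delta_N$ and $\mathcal G:H^1\to L^2$ locally Lipschitz, analytic-semigroup regularization for \eqref{eq:positive-strong} (you spell out the Hölder bootstrap where the paper cites Lunardi), and a restart of the mild formula with $L^2\to H^1$ and $L^2\to L^\infty$ smoothing for \eqref{eq:Linf-continuation} and for the continuity of the time-$T$ map. The only slip is cosmetic: $e^{-tA_\eps}=e^{-t}e^{\eps t\Delta_N}$, not $e^{t}e^{\eps t\Delta_N}$, and this does not affect the equivalence of the two Duhamel formulas.
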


\begin{proof}
By Lemma~\ref{lem:lip} and the above operator identifications,
the standard semilinear theory \cite[Sections~3.3--3.4]{Henry} applies to
$u_t+A_\eps u=e^u-1-(a-1)u$, giving existence, uniqueness, continuous
dependence, and the $H^1$ continuation criterion.  Analytic-semigroup
regularization gives \eqref{eq:positive-strong}; see, for example,
\cite[Section~7.1]{Lunardi}.

If $u$ were bounded in $L^\infty$ near a finite $\Tmax$, then $f(u)$ would
be bounded in $L^2$ there; restarting \eqref{eq:mild} and using
$L^2\to H^1$ smoothing would give a uniform $H^1$ bound, contradicting
\eqref{eq:blowup-alt}. This proves \eqref{eq:Linf-continuation}.
Finally, if $u_{0,n}\to u_0$ in $H^1$ and all solutions are considered up
to a fixed $T<\Tmax(u_0)$, continuous dependence gives a common $H^1$ bound.
Restarting the difference of the mild formulas at $T/2$, Lemma~\ref{lem:lip}
and the $L^2\to L^\infty$ estimate imply
\[
 \|u_n(T)-u(T)\|_\infty
 \le C_{\eps,T,R}\sup_{0\le t\le T}\|u_n(t)-u(t)\|_{H^1}\to0.
\]
The $H^1$ convergence is already part of continuous dependence.
\end{proof}

Throughout the paper, finite-time blow-up means $\Tmax<\infty$ for the
maximal $H^1$ solution above.

\begin{lemma}\label{lem:comparison}
If $u_0,w_0\in H^1(\Om)$ and $u_0\le w_0$ almost everywhere, then the
corresponding solutions remain ordered throughout their common existence
interval. The same conclusion holds when one of the two functions is a
spatially constant classical subsolution or supersolution.
\end{lemma}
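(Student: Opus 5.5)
The plan is a Stampacchia-type truncation argument applied to the difference of the two solutions. Positive-time regularity \eqref{eq:positive-strong} makes the energy identities below legitimate on $[\tau,T]$ for every $0<\tau<T<\min\{\Tmax(u_0),\Tmax(w_0)\}$, and the $H^1$ continuity of both solutions then lets us pass $\tau\downarrow0$.

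\emph{Step 1: the Stampacchia estimate.} Set $z=u-w$. On $[\tau,T]$ both $u$ and $w$ lie in $C([\tau,T];H^2_N)\subset C([\tau,T];L^\infty)$, by the two-dimensional Sobolev embedding. Let $M$ be a common bound for $\|u\|_\infty$ and $\|w\|_\infty$ on $[\tau,T]$, and let $L=e^M+a$ be the Lipschitz constant of $f$ on $[-M,M]$. Test the equation for $z$ against $z^+\in H^1$; this is allowed since $z_t\in C([\tau,T];L^2)$. Because $\partial_\nu z=0$, integrating by parts gives
\[
\tfrac12\tfrac{d}{dt}\|z^+\|_2^2+\eps\|\nabla z^+\|_2^2=\int_\Om (f(u)-f(w))z^+\,dx\le L\|z^+\|_2^2 .
\]
The time differentiation of $\|z^+\|_2^2$ is justified by the standard identity $\frac{d}{dt}\frac12\|z^+\|_2^2=\int z_tz^+$ for $z\in H^1(\tau,T;L^2)$. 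Gronwall's inequality then yields $\|z^+(T)\|_2^2\le e^{2L(T-\tau)}\|z^+(\tau)\|_2^2$.

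\emph{Step 2: passing $\tau\downarrow0$.} This is the main obstacle, because $L$ depends on $\tau$ through $M$, and the pointwise bound can degenerate as $\tau\to0$. I would avoid this by not using an $L^\infty$ Lipschitz constant at all. Instead, use $|f(u)-f(w)|\le |z|(e^u+e^w+a)$, so that
\[
\int (f(u)-f(w))z^+\,dx\le \int (e^u+e^w+a)(z^+)^2\,dx\le (\|e^u\|_2+\|e^w\|_2+a|\Om|^{1/2})\|z^+\|_4^2 .
\]
Next apply the Gagliardo--Nirenberg (Ladyzhenskaya) inequality $\|z^+\|_4^2\le C\|z^+\|_2\|z^+\|_{H^1}$ together with Young's inequality, absorbing the gradient part into $\eps\|\nabla z^+\|_2^2$. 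The factors $\|e^u\|_2$ and $\|e^w\|_2$ are bounded uniformly on $[0,T]$ by \eqref{eq:Lq} and the $H^1$ bounds on compact subintervals. This gives $\frac{d}{dt}\|z^+\|_2^2\le K\|z^+\|_2^2$ with $K$ independent of $\tau$. Hence $\|z^+(T)\|_2\le e^{K(T-\tau)/2}\|z^+(\tau)\|_2$. Letting $\tau\to0$ and using $z^+(\tau)\to z^+(0)=0$ in $L^2$ (by the $H^1$ continuity of both solutions) gives $z^+\equiv0$, that is, $u\le w$ throughout the common existence interval.

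\emph{Step 3: constant sub- or supersolutions.} Suppose $\bar w(t)$ is a spatially constant classical supersolution, meaning $\bar w'\ge f(\bar w)$ on its interval of existence, which automatically satisfies the Neumann condition. Then the same computation applies with the inequality $z_t-\eps\Delta z\le f(u)-f(\bar w)$. Testing against $z^+\ge0$ keeps the same sign, and $e^{\bar w}$ is bounded on compact time intervals, so the argument goes through verbatim. The subsolution case is symmetric: test against $(\bar w-u)^+$ instead.
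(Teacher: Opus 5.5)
Your proof is correct, but it takes a different route from the paper.

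The paper proceeds in three steps. It first takes bounded initial data, for which the mild formula and smoothing give $L^\infty$ bounds on the whole compact interval, including near $t=0$. It then replaces $f$ outside the common range by a globally Lipschitz function, tests the difference equation by its positive part, and applies Gronwall. Finally it reaches general $H^1$ data through the ordered truncations $\max\{-n,\min\{u_0,n\}\}$ and continuous dependence from Proposition~\ref{prop:lwp}.

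You instead work directly with $H^1$ data and handle the degeneration as $\tau\downarrow0$ differently. You bound the coefficient $e^u+e^w+a$ only in $L^2$, uniformly on $[0,T]$ via \eqref{eq:Lq}. You then absorb $\|z^+\|_4^2$ into the diffusion term through the Ladyzhenskaya inequality, which gives a Gronwall constant independent of $\tau$. This is the same exponential-integrability mechanism that underlies Lemma~\ref{lem:lip}.

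Each approach has its advantage. Yours is self-contained: it needs neither the approximation step nor $L^\infty$ control up to $t=0$ for truncated data. The paper's comparison step itself uses only local Lipschitz continuity of $f$ on bounded ranges, and leaves the $H^1$ difficulty to the well-posedness theory.

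Two small remarks. Your Step~1, with its $\tau$-dependent constant, is superfluous; only its justification of the energy computation on $[\tau,T]$ is actually used. For the restarted comparisons needed later, e.g.\ in Lemma~\ref{lem:invariance}, the same argument applies from any $t_1$ with $u(t_1)\le\bar w(t_1)$, letting $\tau\downarrow t_1$.
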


\begin{proof}
For bounded initial data, the mild formula and positive-time smoothing make
two solutions uniformly bounded on every compact common existence interval.
Replace $f$ outside their common range by a globally Lipschitz function and
test the difference equation by its positive part; Gronwall's inequality
preserves the order. For general $H^1$ data, apply this argument to the
ordered truncations $\max\{-n,\min\{u_0,n\}\}$ and pass to the limit using
Proposition~\ref{prop:lwp}. The same positive-part argument after any
positive restart time applies when one of the comparison functions is a spatially constant
classical sub- or supersolution; letting the restart time tend to the
comparison time gives the stated conclusion.
\end{proof}

\subsection{Mean evolution and energy dissipation}\label{ssec:mean-energy}

Two scalar quantities will organize the global dynamics. Convexity of the
reaction makes the spatial mean dominate the scalar ODE through Jensen's
inequality, while the natural parabolic energy is decreasing along strong
trajectories. The first gives diffusion-independent blow-up criteria; the
second supplies the bounds used in Section~\ref{sec:trichotomy}.

\begin{lemma}\label{lem:mean}
Let $m(t)=\ave{u(t)}$. Then $m\in C^1([0,\Tmax))$ and
\[
 m'(t)=\ave{f(u(t))}\ge f(m(t)).
\]
Consequently, if $y$ solves \eqref{eq:ODE} with $y(t_0)=m(t_0)$, then
\[
 m(t)\ge y(t)
\]
for $t\ge t_0$, as long as both functions are finite.
\end{lemma}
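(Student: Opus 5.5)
To prove Lemma~\ref{lem:mean}, I would first establish the regularity of $m$ and then read off the inequality from Jensen. The differential identity is clearest on positive times, where Proposition~\ref{prop:lwp} gives $u\in C^1([\tau,T];L^2(\Om))\cap C([\tau,T];H^2_N(\Om))$. Integrating the equation over $\Om$ and using the Neumann condition, $\int_\Om\Delta u\,dx=\int_{\partial\Om}\partial_\nu u=0$, I obtain $m'(t)=\ave{f(u(t))}$ for $t>0$. Here the time derivative is taken in $L^2$, and averaging is a bounded linear functional on $L^2$.

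To extend $C^1$ regularity up to $t=0$, I would note that $t\mapsto f(u(t))$ lies in $C([0,\Tmax);L^2(\Om))$. This follows from Lemma~\ref{lem:lip}, together with the $L^2$-continuity of the linear part on $H^1$ balls, since $u\in C([0,\Tmax);H^1)$. Then $m(t)-m(\tau)=\int_\tau^t\ave{f(u(s))}\,ds$ for $0<\tau<t$. Letting $\tau\downarrow0$ and using $H^1$-continuity of $u$ (so $m$ is continuous) gives the integral identity from $0$, with a continuous integrand. Alternatively, one can average the mild formula~\eqref{eq:mild} directly, because $\ave{e^{\eps t\Delta_N}g}=\ave{g}$. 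Either way $m\in C^1([0,\Tmax))$. The inequality $\ave{f(u)}\ge f(\ave{u})$ is Jensen's inequality for the convex function $f$ of Lemma~\ref{lem:scalar} with respect to the probability measure $dx/|\Om|$; it is legitimate because $f(u)\in L^1$.

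For the comparison, let $y$ solve $y'=f(y)$ with $y(t_0)=m(t_0)$. Then $m'\ge f(m)$, and $f$ is locally Lipschitz. On any interval $[t_0,T]$ where both functions are finite, both are bounded, so $f$ is Lipschitz with some constant $L$ on their common range. Setting $w=y-m$, I get $w'\le f(y)-f(m)\le L|w|$, and on the set where $w>0$ this reads $w'\le Lw$. With $w(t_0)=0$, Gronwall applied to $w^+$ (for example via $\frac{d}{dt}(e^{-Lt}w^+)\le0$ almost everywhere) gives $w^+\equiv0$, i.e.\ $m\ge y$.

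The main obstacle is only the technical point at $t=0$, where $u_0$ is merely $H^1$. I would handle it through the integral identity above, so no step presents a real difficulty.
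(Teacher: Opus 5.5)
Your proposal is correct and matches the paper's argument: the paper averages the mild formula \eqref{eq:mild} (your stated alternative, using that $e^{\eps t\Delta_N}$ preserves means) to get $m(t)=m(0)+\int_0^t\ave{f(u(s))}\,ds$, with the integrand continuous up to $t=0$ by Lemma~\ref{lem:lip}, and then concludes by Jensen and scalar comparison. Your primary route through positive-time strong regularity and the Neumann boundary integral reaches the same integral identity, and your Gronwall argument on $w^+$ simply writes out the scalar comparison step.
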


\begin{proof}
Taking the mean in \eqref{eq:mild} gives
$m(t)=m(0)+\int_0^t\ave{f(u(s))}\,ds$.
The local Lipschitz property of $f:H^1(\Om)\to L^2(\Om)$ implies continuity
of the integrand, including at $t=0$. 
Jensen's inequality and scalar comparison end the proof.
\end{proof}

Define the primitive and the associated energy functional by
\begin{align*}
     &F(s)=\int_0^s f(\sigma)\,d\sigma
 =e^s-1-s-\frac a2s^2,\\
 &E_\eps(u)
 =\frac\eps2\|\nabla u\|_2^2
 -\int_\Om F(u)\,dx,
 \qquad u\in H^1(\Om).
\end{align*}

The functional is finite and continuous on $H^1(\Om)$ by Lemma~\ref{lem:lip}.

\begin{lemma}\label{lem:energy}
For every solution of \eqref{eq:P} and every
$0\le t_0< t_1<\Tmax$,
\begin{equation}\label{eq:energy-identity}
 E_\eps(u(t))
 +\int_{t_0}^t\|u_t(\tau)\|_2^2\,d\tau
 =E_\eps(u(t_0)),\quad t_0<t<t_1.
\end{equation}
In particular, $E_\eps(u(t))$ is nonincreasing on the maximal existence
interval.
\end{lemma}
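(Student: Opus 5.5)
The plan is to prove the energy identity first for positive starting times, where the solution is strong, and then to reach $t_0=0$ by continuity in $H^1$.

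\emph{Identity for $t_0>0$.} Fix $0<\tau\le t_0<t<t_1<\Tmax$. By \eqref{eq:positive-strong} we have $u\in C([\tau,t_1];H^2_N(\Om))\cap C^1([\tau,t_1];L^2(\Om))$, so the equation $u_t=\eps\Delta u+f(u)$ holds in $L^2$ for every $s\in[\tau,t_1]$. Both $\Delta u$ and $f(u)$ are continuous into $L^2$ there (for $f(u)$ use Lemma~\ref{lem:lip}), and hence so is $u_t$.

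\emph{Differentiating the energy.} We differentiate the two pieces of $E_\eps(u(s))$ separately.
\begin{itemize}
\item For the gradient term, $u\in C^1([\tau,t_1];L^2)$ together with $\Delta u\in C([\tau,t_1];L^2)$ and the Neumann condition justify
\[
\frac{d}{ds}\frac12\|\nabla u\|_2^2=-\int_\Om u_t\Delta u\,dx .
\]
To make this rigorous, take difference quotients, integrate by parts against $\Delta u(s)$, and pass to the limit.
\item For the potential term, the map $u\mapsto\int_\Om F(u)\,dx$ is $C^1$ from $H^1$ with derivative $f(u)\in L^2$. The mean-value bound together with \eqref{eq:Lq} controls the remainder: it is $|F(w)-F(u)-f(u)(w-u)|\le |w-u|^2(e^u+e^w+a)$, and this is $o(\|w-u\|_{H^1})$ by H\"older with $L^4$. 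Since $u\in C([\tau,t_1];H^2)\subset C(H^1)$, we need difference quotients of $u$ to converge in $L^2$ while staying bounded in $H^1$. The cleanest way I would use is
\[
F(u(s+h))-F(u(s))=\int_0^1 f\bigl(u(s)+\theta(u(s+h)-u(s))\bigr)\,d\theta\,\bigl(u(s+h)-u(s)\bigr).
\]
Divide by $h$ and let $h\to0$. The average of $f(\cdot)$ converges in $L^2$ by Lemma~\ref{lem:lip}, and the quotient converges in $L^2$ to $u_t$. This gives $\frac{d}{ds}\int_\Om F(u)\,dx=\int_\Om f(u)u_t\,dx$.
\end{itemize}

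\emph{Combining.} We obtain
\[
\frac{d}{ds}E_\eps(u(s))=-\int_\Om u_t\,(\eps\Delta u+f(u))\,dx=-\|u_t\|_2^2 ,
\]
which is continuous on $[\tau,t_1]$. Integrating from $t_0$ to $t$ yields \eqref{eq:energy-identity} for $t_0>0$.

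\emph{The case $t_0=0$.} Let $t_0\downarrow0$. Then $E_\eps(u(t_0))\to E_\eps(u_0)$ because $E_\eps$ is continuous on $H^1$ and $u\in C([0,\Tmax);H^1)$. The dissipation integrals increase monotonically, so by monotone convergence $\int_0^t\|u_t\|_2^2\,d\tau$ is finite and equals $E_\eps(u_0)-E_\eps(u(t))$. This also shows a posteriori that $u_t\in L^2((0,t)\times\Om)$. Monotonicity of the energy is then immediate.

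\emph{Main obstacle.} The delicate point is the regularity needed to differentiate $\|\nabla u\|_2^2$, since $u_t$ is only known to lie in $L^2$, not in $H^1$. I would handle it by the identity
\[
\|\nabla u(s+h)\|_2^2-\|\nabla u(s)\|_2^2=-\int_\Om\bigl(u(s+h)-u(s)\bigr)\bigl(\Delta u(s+h)+\Delta u(s)\bigr)\,dx .
\]
It holds by integration by parts for $H^2_N$ functions, and continuity of $\Delta u$ in $L^2$ lets one pass to the limit.
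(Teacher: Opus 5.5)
Your proposal is correct and follows the paper's proof. Positive-time $H^2_N\cap C^1(L^2)$ regularity justifies the chain rule and the integration by parts giving $\frac{d}{dt}E_\eps(u(t))=-\|u_t\|_2^2$, and the case $t_0=0$ then follows from the $H^1$-continuity of $u$ and of $E_\eps$. You additionally write out the difference-quotient justification and the monotone-convergence step for $\int_0^t\|u_t\|_2^2$, which the paper leaves implicit.
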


\begin{proof}
For $t_0<t<t_1$, Proposition~\ref{prop:lwp} gives the regularity needed
for the standard chain rule and integration by parts. Hence
\[
 \frac d{dt}E_\eps(u(t))
 =\int_{\Om}(-\eps\Delta u-f(u))u_t\, dx
 =-\|u_t\|_2^2
\]
for $t\in(t_0,t_1)$. Integration proves \eqref{eq:energy-identity} for
$t_0>0$.  Letting $t_0\downarrow0$ and using $u(t_0)\to u_0$ in $H^1$ together
with continuity of $E_\eps:H^1\to\R$ proves the identity at $t_0=0$.
\end{proof}

\section{Mean criterion and quantitative stabilization}\label{sec:stab}

This section establishes the diffusion-independent mean criterion and then
develops the fast-time homogenization mechanism. The resulting uniform stabilization theorem will be used both in the phase-portrait analysis of Section~\ref{sec:trichotomy} and, after optimization of its parameters, in the upper bound for the stabilization threshold in Section~\ref{sec:sharp}.

\subsection{The mean criterion}

\begin{proposition}\label{prop:mean-threshold}
Let $u$ be the maximal $H^1$ solution of \eqref{eq:P}.
\begin{enumerate}
\item If $\ave{u_0}>\xi_a$, then for every $\eps>0$,
\begin{equation}\label{eq:mean-blowup-time}
 \Tmax\le\int_{\ave{u_0}}^\infty\frac{ds}{f(s)}<\infty.
\end{equation}
\item If $\ave{u_0}=\xi_a$, then either $u_0\equiv\xi_a$ and
$u\equiv\xi_a$, or $\Tmax<\infty$ for every $\eps>0$.
\end{enumerate}
\end{proposition}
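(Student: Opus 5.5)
Both parts will come from the mean comparison in Lemma~\ref{lem:mean}, combined with the scalar facts in Lemma~\ref{lem:scalar}.

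\emph{Part (1).} Set $m_0=\ave{u_0}>\xi_a$ and let $y$ solve \eqref{eq:ODE} with $y(0)=m_0$. Since $f>0$ on $(\xi_a,\infty)$, $y$ is increasing. Separation of variables gives
\[
 t=\int_{m_0}^{y(t)}\frac{ds}{f(s)},
\]
and the Osgood integral is finite by Lemma~\ref{lem:scalar}. Hence $y(t)\to\infty$ as $t\uparrow T_y:=\int_{m_0}^\infty ds/f(s)$. Suppose $\Tmax>T_y$. Lemma~\ref{lem:mean} gives $m(t)\ge y(t)$ for $t<T_y$, so $m(t)\to\infty$ as $t\uparrow T_y$. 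This contradicts the continuity of $m$ on $[0,\Tmax)\supset[0,T_y]$, since $m\in C^1([0,\Tmax))$. Therefore $\Tmax\le T_y$, which is \eqref{eq:mean-blowup-time}. The bound holds for every $\eps$, because neither Jensen's inequality nor the ODE involves $\eps$.

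\emph{Part (2).} If $u_0\equiv\xi_a$, the constant $\xi_a$ is a classical solution lying in $C([0,\infty);H^1)$, so uniqueness in Proposition~\ref{prop:lwp} gives $u\equiv\xi_a$. Suppose instead $u_0\not\equiv\xi_a$. The plan is to show that $m(t_0)>\xi_a$ for some small $t_0>0$ and then restart part (1) at time $t_0$. Since $m'(t)=\ave{f(u(t))}$ is continuous at $t=0$ and $m(0)=\xi_a$, it suffices to show $\ave{f(u_0)}>f(\xi_a)=0$.
\begin{itemize}
 \item Strict convexity of $f$ turns Jensen's inequality into a strict inequality unless $u_0$ is a.e.\ constant.
 \item A constant with mean $\xi_a$ equals $\xi_a$, which is excluded.
\end{itemize}
Hence $m'(0)>0$, so $m(t_0)>\xi_a$ for small $t_0>0$. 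Applying part (1) to the restarted solution with initial datum $u(t_0)\in H^1$ gives $\Tmax<\infty$, again for every $\eps$.

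The main obstacle is the strict Jensen inequality for a general $u_0\in H^1$. I would prove it directly from the tangent line at $\xi_a$:
\[
 f(s)\ge f(\xi_a)+f'(\xi_a)(s-\xi_a),
\]
with strict inequality for $s\ne\xi_a$ by strict convexity. Averaging this over $\Om$, the linear term integrates to zero because $\ave{u_0}=\xi_a$. Therefore $\ave{f(u_0)}>0$ as soon as $\{u_0\ne\xi_a\}$ has positive measure. Integrability of $f(u_0)$ follows from \eqref{eq:Lq}.
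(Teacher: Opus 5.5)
Your proposal is correct and follows the paper's proof essentially step for step. Part (1) uses the mean comparison of Lemma~\ref{lem:mean} together with the Osgood integral. Part (2) uses strict Jensen to get $m'(0)=\ave{f(u_0)}>0$ and then restarts part (1); your tangent-line argument at $\xi_a$ just writes out the strict inequality $\ave{e^{u_0}}>e^{\xi_a}$ that the paper cites directly.
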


\begin{proof}
For the first assertion, let $y'=f(y)$ with $y(0)=\ave{u_0}$.  By
Lemma~\ref{lem:mean}, $\ave{u(t)}\ge y(t)$ while both are finite, and the
scalar Osgood integral in \eqref{eq:mean-blowup-time} gives the claimed
upper bound for $\Tmax$. For the second, if $u_0$ is nonconstant, strict Jensen gives
$\left.\frac d{dt}\ave{u(t)}\right|_{t=0}
=\ave{e^{u_0}}-e^{\xi_a}>0$.
The mean is therefore supercritical at a small positive time, and the first
assertion applies after restarting.
\end{proof}

Proposition~\ref{prop:mean-threshold} settles the supercritical and
critical mean independently of diffusion. We now show that a fixed
subcritical mean gap leads, uniformly over bounded $H^1$ families, to
the opposite behavior when diffusion is sufficiently strong.

\subsection{Homogenization on the diffusive time scale}\label{ssec:fast-time}

The stabilization mechanism acts on the diffusive time scale. After the
rescaling $s=\eps t$, the equation becomes the Neumann heat flow perturbed by
a reaction of size $\eps^{-1}$. The purpose of the next estimates is to keep
the gradient and mean under uniform control throughout a fast-time interval
long enough for the linear heat flow to homogenize the initial profile.

Fix $R>0$ and $\delta\in(0,\xi_a)$, and assume
\begin{equation}\label{eq:data}
 \|u_0\|_{H^1}\le R,
 \qquad
 \ave{u_0}\le\xi_a-\delta.
\end{equation}
Set $D_R=R^2+1$, $d_R=\sqrt{D_R}-R$, and
$M_R=R/\sqrt{|\Om|}+1/2$.
Here $D_R$ is the bootstrap bound for $\|\nabla v\|_2^2$,
$d_R$ is the corresponding gap above the initial gradient
bound, and $M_R$ bounds the absolute value of the mean. The additive choice $D_R=R^2+1$ preserves the sharp exponential scale while ensuring that $d_R\asymp R^{-1}$.
For $q\in(1,2]$, define
\begin{equation}\label{eq:FqR}
\begin{aligned}
 \mathcal F_{q,R}={}&(C_{\mathrm{MT}}|\Om|)^{1/q}
 \exp\left(\xi_a+\frac12+\frac{qD_R}{8\pi}\right)+|\Om|^{1/q}\\
 &+a\left(
     |\Om|^{1/q}M_R
     +|\Om|^{1/q-1/2}\sqrt{\frac{D_R}{\lambda_2}}
    \right),
\end{aligned}
\end{equation}
and set
\[
\mathcal H_q(S)
   =S+\frac{q}{q-1}S^{1-1/q},
   \qquad S>0.
\]
Here $\mathcal F_{q,R}$ and $H_q(S)$ will serve as a uniform $L^q$-bound during the bootstrap.

Introduce the fast time $s=\eps t$ and set $\tilde{u}(x,s)=u(x,s/\eps)$.
\begin{equation*}
\left\{
\begin{aligned}
 \tilde u_s-\Delta\tilde u&=\eps^{-1}f(\tilde u), && x\in\Om,\ 0<s<S_{\max},\\
 \partial_\nu\tilde u&=0, && x\in\partial\Om,\ 0<s<S_{\max},\\
 \tilde u(x,0)&=u_0(x), && x\in\Om,
\end{aligned}
\right.
\tag{$\tilde P_\eps$}\label{eq:Ptilde}
\end{equation*}
where $S_{\max}=\eps\Tmax$.

\begin{lemma}\label{lem:barrier}
Assume \eqref{eq:data}. Let $q\in(1,2]$ and $S>0$. If
\[
 \eps>
 \max\left\{
  \frac{C_{\mathrm{sm}}\mathcal{H}_q(S)\mathcal F_{q,R}}{d_R},
  \ 2S|\Om|^{-1/q}\mathcal F_{q,R}
 \right\},
\]
then $S_{\max}>S$ and, for $0\le s\le S$,
\begin{equation}\label{eq:barrier}
 \|\nabla \tilde{u}(s)\|_2^2\le D_R,
 \qquad
 |\ave{\tilde{u}(s)}-\ave{u_0}|\le\frac12,
 \qquad
 \|f(\tilde{u}(s))\|_q\le\mathcal F_{q,R}.
\end{equation}
Furthermore,
\begin{equation}\label{eq:heatapprox-Linf}
 \|\tilde{u}(S)-e^{S\Delta_N}u_0\|_\infty
 \le
 \frac{C_{\mathrm{sm}}\mathcal{H}_q(S)\mathcal F_{q,R}}{\eps}.
\end{equation}
\end{lemma}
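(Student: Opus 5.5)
This is a continuity (bootstrap) argument on the fast time interval. Let $S^*$ be the supremum of those $s_1\in[0,\min\{S,S_{\max}\})$ for which the first two bounds in \eqref{eq:barrier} hold on $[0,s_1]$ with the strict versions $\|\nabla\tilde u\|_2^2<D_R$ and $|\ave{\tilde u}-\ave{u_0}|<\tfrac12$. At $s=0$ these hold, since $\|\nabla u_0\|_2^2\le R^2<D_R$. By continuity of $\tilde u$ in $H^1$ (Proposition~\ref{prop:lwp}), $S^*>0$.

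\textbf{Step 1: the $L^q$ bound.} On $[0,S^*)$ the third bound follows from the first two. We have $|\ave{\tilde u}|\le|\ave{u_0}|+\tfrac12\le M_R$, using $|\ave{u_0}|\le\|u_0\|_2/\sqrt{|\Om|}\le R/\sqrt{|\Om|}$. Also $\ave{\tilde u}\le\xi_a-\delta+\tfrac12\le\xi_a+\tfrac12$. Write $|f(\tilde u)|\le e^{\tilde u}+1+a|\ave{\tilde u}|+a|\tilde u^\perp|$. Lemma~\ref{lem:expint} \eqref{eq:Lq} gives $\|e^{\tilde u}\|_q\le(C_{\mathrm{MT}}|\Om|)^{1/q}\exp(\xi_a+\tfrac12+qD_R/(8\pi))$. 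Trivially $\|1\|_q=|\Om|^{1/q}$ and $\|\ave{\tilde u}\|_q\le|\Om|^{1/q}M_R$. By H\"older and Poincar\'e, $\|\tilde u^\perp\|_q\le|\Om|^{1/q-1/2}\|\tilde u^\perp\|_2\le|\Om|^{1/q-1/2}\sqrt{D_R/\lambda_2}$. Summing gives exactly $\mathcal F_{q,R}$.

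\textbf{Step 2: improving the bounds.} Use the rescaled mild formula
\[
\tilde u(s)=e^{s\Delta_N}u_0+\eps^{-1}\int_0^se^{(s-\sigma)\Delta_N}f(\tilde u(\sigma))\,d\sigma.
\]
For the gradient, \eqref{eq:gradient-contractivity} and \eqref{eq:Lq-semigroup} give
\[
\|\nabla\tilde u(s)\|_2\le R+\eps^{-1}C_{\mathrm{sm}}\mathcal F_{q,R}\int_0^s\bigl(1+(s-\sigma)^{-1/q}\bigr)d\sigma .
\]
The integral equals $s+\frac{q}{q-1}s^{1-1/q}=\mathcal H_q(s)\le\mathcal H_q(S)$. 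The first condition on $\eps$ therefore gives $\|\nabla\tilde u\|_2<R+d_R=\sqrt{D_R}$. For the mean, Lemma~\ref{lem:mean} in fast time gives
\[
|\ave{\tilde u(s)}-\ave{u_0}|\le\eps^{-1}\int_0^s\ave{|f|}\le\eps^{-1}S|\Om|^{-1/q}\mathcal F_{q,R},
\]
using $\ave{|g|}\le|\Om|^{-1/q}\|g\|_q$. The second condition on $\eps$ makes this $<\tfrac12$.

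\textbf{Step 3: closing.} Both strict inequalities thus hold with margin on $[0,S^*)$. If $S^*<\min\{S,S_{\max}\}$, continuity would extend them past $S^*$, a contradiction. If $S_{\max}\le S$, then $\tilde u$ would stay bounded in $H^1$ up to $S_{\max}$: the gradient is bounded by $D_R$, and the mean by $M_R$ together with Poincar\'e. This contradicts \eqref{eq:blowup-alt}. Hence $S_{\max}>S$, and by continuity the non-strict bounds \eqref{eq:barrier} hold on the closed interval $[0,S]$. Finally, \eqref{eq:heatapprox-Linf} follows from the same Duhamel integral estimated with the $L^\infty$ part of \eqref{eq:Lq-semigroup}, giving the bound $C_{\mathrm{sm}}\mathcal H_q(S)\mathcal F_{q,R}/\eps$.

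The main obstacle is Step 1: verifying that the Moser--Trudinger bound is applied with the mean bound $\ave{\tilde u}\le\xi_a+\tfrac12$ and gradient bound $D_R$, so that the exponential factor matches $\mathcal F_{q,R}$ exactly. A secondary point is the care needed at $s=0$, handled by the continuity in $H^1$ given by Proposition~\ref{prop:lwp}.
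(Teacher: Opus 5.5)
Your proposal is correct and follows essentially the same bootstrap argument as the paper. It uses the same strict-inequality continuity set, assembles $\mathcal F_{q,R}$ from the Moser--Trudinger and Poincar\'e bounds in the same way, runs the same Duhamel estimates for the gradient, the mean, and the final $L^\infty$ bound, and invokes the $H^1$ continuation criterion to force $S_{\max}>S$; the only cosmetic difference is that you get the mean identity from Lemma~\ref{lem:mean} in fast time rather than by averaging the mild formula directly, which is equivalent.
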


\begin{proof}
The rescaled mild formula is
\begin{equation}\label{eq:fast-mild}
 \tilde{u}(s)=e^{s\Delta_N}u_0
 +\frac1\eps\int_0^s e^{(s-r)\Delta_N}f(\tilde{u}(r))\,dr.
\end{equation}
Let $\sigma_*$ be the supremum of all
$\sigma<\min\{S,S_{\max}\}$ such that
$\|\nabla \tilde{u}(r)\|_2<\sqrt{D_R}$ and
$|\ave{\tilde{u}(r)}-\ave{u_0}|<1/2$ for $0\le r\le\sigma$.
Since $\|\nabla u_0\|_2\le R<\sqrt{D_R}$, this set is nonempty.

For $0\le s<\sigma_*$, the bootstrap assumptions and \eqref{eq:data} give
$\ave{\tilde{u}(s)}\le\ave{u_0}+1/2
\le\xi_a-\delta+1/2\le\xi_a+1/2$.
Also,
$|\ave{u_0}|\le|\Om|^{-1/2}\|u_0\|_2
\le R/\sqrt{|\Om|}$,
so $|\ave{\tilde{u}(s)}|\le M_R$. Together with
$\|\nabla \tilde{u}(s)\|_2^2\le D_R$, Lemma~\ref{lem:expint} gives
\[
 \|e^{\tilde{u}(s)}\|_q
 \le(C_{\mathrm{MT}}|\Om|)^{1/q}
 \exp\left(\xi_a+\frac12+\frac{qD_R}{8\pi}\right).
\]
Since $q\le2$, Poincar\'e inequality and the embedding
$L^2(\Om)\hookrightarrow L^q(\Om)$ yield
\[
 \|\tilde{u}(s)\|_q
 \le |\Om|^{1/q}|\ave{\tilde{u}(s)}|+\|\tilde{u}(s)^\perp\|_q
 \le |\Om|^{1/q}M_R
 +|\Om|^{1/q-1/2}\sqrt{\frac{D_R}{\lambda_2}}.
\]
Hence $\|f(\tilde{u}(s))\|_q\le\mathcal F_{q,R}$ for $0\le s<\sigma_*$.
Taking the gradient in \eqref{eq:fast-mild}, using
\eqref{eq:gradient-contractivity} and \eqref{eq:Lq-semigroup}, gives
\[
 \begin{aligned}
 \|\nabla \tilde{u}(s)\|_2
 &\le R+\frac{C_{\mathrm{sm}}}{\eps}
   \int_0^s\bigl(1+(s-r)^{-1/q}\bigr)\|f(\tilde{u}(r))\|_q\,dr\\
 &\le R+
 \frac{C_{\mathrm{sm}}\mathcal{H}_q(S)\mathcal F_{q,R}}{\eps}
 <\sqrt{D_R}.
 \end{aligned}
\]
Taking the spatial mean in \eqref{eq:fast-mild} gives
$\ave{\tilde{u}(s)}-\ave{u_0}
=\eps^{-1}\int_0^s\ave{f(\tilde{u}(r))}\,dr$,
and therefore
$|\ave{\tilde{u}(s)}-\ave{u_0}|
\le S|\Om|^{-1/q}\mathcal F_{q,R}/\eps<1/2$.
Both bootstrap inequalities improve strictly. Suppose, for contradiction,
that $\sigma_*<\min\{S,S_{\max}\}$. Then $\tilde{u}(\sigma_*)$ is defined,
and passing to $s\uparrow\sigma_*$ in the
improved estimates shows, by continuity, that the strict bounds still hold
at $s=\sigma_*$.  They therefore persist on a slightly larger interval,
contradicting the definition of $\sigma_*$.  Hence
$\sigma_*=\min\{S,S_{\max}\}$.

If $S_{\max}\le S$, the preceding bounds and Poincar\'e inequality give
\[
 \sup_{0\le s<S_{\max}}\|\tilde{u}(s)\|_{H^1}<\infty.
\]
This contradicts the continuation criterion. Hence $S_{\max}>S$, and
\eqref{eq:barrier} follows by continuity.

Finally, evaluate the Duhamel term in \eqref{eq:fast-mild} at $S$ and use
the $L^q$--$L^\infty$ estimate:
\[
 \|\tilde{u}(S)-e^{S\Delta_N}u_0\|_\infty
 \le\frac{C_{\mathrm{sm}}\mathcal F_{q,R}}\eps
 \int_0^S\bigl(1+(S-r)^{-1/q}\bigr)\,dr
 =\frac{C_{\mathrm{sm}}\mathcal{H}_q(S)\mathcal F_{q,R}}\eps.
\]
This proves \eqref{eq:heatapprox-Linf}.
\end{proof}

\begin{lemma}\label{lem:heat-homog}
There exists $C_\Om>0$ such that
\begin{equation}\label{eq:heat-homog}
 \|e^{S\Delta_N}g-\ave{g}\|_\infty
 \le C_\Om e^{-\lambda_2S/2}\|g^\perp\|_2
 \qquad(S\ge1).
\end{equation}
\end{lemma}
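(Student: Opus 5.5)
The plan is to combine the spectral gap on mean-zero functions with the $L^2\to L^\infty$ smoothing bound \eqref{eq:semigroup}. The Neumann semigroup commutes with taking the mean and preserves the mean-zero subspace, so for any $g\in L^2(\Om)$
\[
 e^{S\Delta_N}g-\ave{g}=e^{S\Delta_N}g^\perp .
\]
It therefore suffices to bound $\|e^{S\Delta_N}g^\perp\|_\infty$.

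For $S\ge1$ I will split $S=\tfrac12+(S-\tfrac12)$ and write
\[
 e^{S\Delta_N}g^\perp=e^{\frac12\Delta_N}\bigl(e^{(S-\frac12)\Delta_N}g^\perp\bigr).
\]
The outer factor is handled by \eqref{eq:semigroup} at time $t=\tfrac12$:
\[
 \|e^{\frac12\Delta_N}h\|_\infty\le C_{\mathrm{sm}}(1+\sqrt2)\|h\|_2 .
\]
The inner factor is handled by the spectral theorem on the mean-zero subspace. The spectrum of $-\Delta_N$ there lies in $[\lambda_2,\infty)$, so
\[
 \|e^{(S-\frac12)\Delta_N}g^\perp\|_2\le e^{-\lambda_2(S-\frac12)}\|g^\perp\|_2 .
\]
Combining the two estimates gives
\[
 \|e^{S\Delta_N}g-\ave{g}\|_\infty\le C_{\mathrm{sm}}(1+\sqrt2)\,e^{\lambda_2/2}\,e^{-\lambda_2 S}\|g^\perp\|_2 .
\]
Since $e^{-\lambda_2S}\le e^{-\lambda_2S/2}$, this yields \eqref{eq:heat-homog} with $C_\Om=C_{\mathrm{sm}}(1+\sqrt2)e^{\lambda_2/2}$. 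The stated weaker rate $\lambda_2/2$ leaves room if one instead splits at $S/2$; either choice works once $S\ge1$.

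The only point needing care is the commutation identity $\ave{e^{S\Delta_N}g}=\ave{g}$. It follows from the Neumann condition, since $\frac{d}{dt}\int_\Om e^{t\Delta_N}g\,dx=\int_\Om\Delta e^{t\Delta_N}g\,dx=0$, or equivalently because constants span the $\lambda_1=0$ eigenspace and the semigroup acts diagonally in the eigenbasis. I expect no real obstacle here, as everything reduces to Lemma~\ref{lem:uniform-smoothing} and the spectral decomposition.
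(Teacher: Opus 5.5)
Your proof is correct and follows the paper's argument: the Neumann semigroup preserves constants, and the semigroup is then factored into an $L^2\to L^\infty$ smoothing step from \eqref{eq:semigroup} and a spectral-gap step on the mean-zero subspace. The only difference is the split point. The paper splits at $S/2$, so the smoothing factor is $1+(S/2)^{-1/2}\le 1+\sqrt2$ and the decay is exactly $e^{-\lambda_2 S/2}$. Your split at $\tfrac12$ gives the sharper rate $e^{-\lambda_2 S}$, at the cost of the harmless factor $e^{\lambda_2/2}$ in $C_\Om$.
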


\begin{proof}
Since the Neumann heat semigroup preserves constants,
$e^{S\Delta_N}g-\ave{g}=e^{S\Delta_N}g^\perp$.
Factor at time $S/2$ and combine \eqref{eq:semigroup} with the spectral gap:
\[
 \begin{aligned}
 \|e^{S\Delta_N}g-\ave{g}\|_\infty
 &\le C_{\mathrm{sm}}\left(1+(S/2)^{-1/2}\right)
       \|e^{(S/2)\Delta_N}g^\perp\|_2\\
 &\le(1+\sqrt{2})C_{\mathrm{sm}}
       e^{-\lambda_2S/2}\|g^\perp\|_2.
 \end{aligned}
\]
Thus \eqref{eq:heat-homog} holds, for example, with
$C_\Om=(1+\sqrt{2})C_{\mathrm{sm}}$.
\end{proof}

Define
\begin{align}
S_{R,\delta}
&=1+\frac{2}{\lambda_2}\log_+\left(\frac{4C_\Om R}{\delta}\right),
\label{eq:S-R-delta}
\\
\eps_{\mathrm{stab}}^{(q)}(R,\delta)
&=1+
C_{\mathrm{sm}}
  \mathcal H_q(S_{R,\delta})
  \mathcal F_{q,R}\left(d_R^{-1}+\frac4\delta\right)+
 2S_{R,\delta}|\Om|^{-1/q}\mathcal F_{q,R}.
\label{eq:eps-stab}
\end{align}
Here, $\log_+x=\max\{0,\log x\}$.

The $d_R^{-1}$ contribution in \eqref{eq:eps-stab}, together with the last term,
guarantees the bootstrap conditions of Lemma~\ref{lem:barrier}. 
The $4/\delta$ contribution then makes the nonlinear Duhamel error smaller than $\delta/4$. Set $t_\eps=t_\eps(R,\delta)=S_{R,\delta}/\eps$.

\begin{proposition}\label{prop:entry}
If \eqref{eq:data} holds and
$\eps>\eps_{\mathrm{stab}}^{(q)}(R,\delta)$
for some $q\in(1,2]$, then the solution exists up to $t_\eps$ and
\begin{equation}\label{eq:entry}
 \|u(t_\eps)-\ave{u_0}\|_\infty\le\frac\delta2.
\end{equation}
Consequently,
\begin{equation}\label{eq:entry-strip}
 -\frac R{\sqrt{|\Om|}}-\frac\delta2
 \le u(x,t_\eps)
 \le\xi_a-\frac\delta2.
\end{equation}
\end{proposition}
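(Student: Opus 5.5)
The plan is to apply Lemma~\ref{lem:barrier} with $S=S_{R,\delta}$ and then split the error at fast time $S$ into a nonlinear Duhamel part and a linear homogenization part.

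\emph{Applying the barrier lemma.} Because $\eps>\eps_{\mathrm{stab}}^{(q)}(R,\delta)$, the definition \eqref{eq:eps-stab} gives $\eps>C_{\mathrm{sm}}\mathcal H_q(S_{R,\delta})\mathcal F_{q,R}/d_R$ and $\eps>2S_{R,\delta}|\Om|^{-1/q}\mathcal F_{q,R}$. Each of these terms appears separately in \eqref{eq:eps-stab} with a nonnegative companion, and the extra summand $1$ only strengthens the inequality. So both hypotheses of Lemma~\ref{lem:barrier} hold with $S=S_{R,\delta}$. That lemma yields $S_{\max}>S_{R,\delta}$, i.e.\ $\Tmax>t_\eps$, and the estimate \eqref{eq:heatapprox-Linf}.

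\emph{Nonlinear part.} We use \eqref{eq:eps-stab} once more: $\eps>C_{\mathrm{sm}}\mathcal H_q(S_{R,\delta})\mathcal F_{q,R}\cdot 4/\delta$. Inserting this into \eqref{eq:heatapprox-Linf} gives $\|\tilde u(S_{R,\delta})-e^{S_{R,\delta}\Delta_N}u_0\|_\infty<\delta/4$.

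\emph{Linear part.} Since $S_{R,\delta}\ge1$, Lemma~\ref{lem:heat-homog} applies and gives
\[
\|e^{S_{R,\delta}\Delta_N}u_0-\ave{u_0}\|_\infty\le C_\Om e^{-\lambda_2S_{R,\delta}/2}\|u_0^\perp\|_2 .
\]
We bound $\|u_0^\perp\|_2\le\|u_0\|_2\le R$, using that $u_0^\perp$ is the $L^2$-orthogonal projection. From \eqref{eq:S-R-delta}, $e^{-\lambda_2S_{R,\delta}/2}\le e^{-\lambda_2/2}\min\{1,\delta/(4C_\Om R)\}\le\delta/(4C_\Om R)$, so the linear part is at most $\delta/4$. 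The only care needed is the $\log_+$ case. When $4C_\Om R\le\delta$, the factor is at most $1$ and the bound $C_\Om R\le\delta/4$ holds directly, so both cases give $\delta/4$. Adding the two parts gives \eqref{eq:entry}, since $u(t_\eps)=\tilde u(S_{R,\delta})$.

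\emph{The strip.} We combine \eqref{eq:entry} with $\ave{u_0}\le\xi_a-\delta$ and $\ave{u_0}\ge-\|u_0\|_2/\sqrt{|\Om|}\ge-R/\sqrt{|\Om|}$, the latter by Cauchy--Schwarz. This gives \eqref{eq:entry-strip}.

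No step is a genuine obstacle, since the constants in \eqref{eq:eps-stab} and \eqref{eq:S-R-delta} were built for exactly this purpose. The point needing most attention is checking that a single $\eps$-condition simultaneously covers the bootstrap hypotheses and the $4/\delta$ requirement. This works because all summands in \eqref{eq:eps-stab} are nonnegative.
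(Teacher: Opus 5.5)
Your proposal is correct and follows the paper's proof essentially step for step. You apply Lemma~\ref{lem:barrier} with $S=S_{R,\delta}$, bound the Duhamel error by $\delta/4$ using the $4/\delta$ summand in \eqref{eq:eps-stab}, bound the linear error by $\delta/4$ using Lemma~\ref{lem:heat-homog} and the choice of $S_{R,\delta}$, and conclude with the triangle inequality and the two-sided bound on $\ave{u_0}$; your explicit checks that $S_{R,\delta}\ge1$ and that every summand of \eqref{eq:eps-stab} is nonnegative are details the paper leaves implicit.
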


\begin{proof}
Apply Lemma~\ref{lem:barrier} with $S=S_{R,\delta}$. Since
$\|u_0^\perp\|_2\le R$, the definition of $S_{R,\delta}$ and
Lemma~\ref{lem:heat-homog} give
$\|e^{S_{R,\delta}\Delta_N}u_0-\ave{u_0}\|_\infty\le\delta/4$.
The first term in \eqref{eq:eps-stab} and
\eqref{eq:heatapprox-Linf} give $\|\tilde u(S_{R,\delta})-e^{S_{R,\delta}\Delta_N}u_0\|_\infty
 <\delta/4$.
Since $u(t_\eps)=\tilde u(S_{R,\delta})$, the triangle inequality proves
\eqref{eq:entry}. Since
$-R/\sqrt{|\Om|}\le\ave{u_0}\le\xi_a-\delta$,
\eqref{eq:entry-strip} follows.
\end{proof}

\subsection{Invariant subthreshold dynamics}\label{ssec:invariance}

The fast-time argument is needed only to enter a pointwise region strictly
below $\xi_a$. Once such entry has occurred, scalar comparison takes over:
the subthreshold strip is invariant and its upper and lower scalar
envelopes both decay to zero.

\begin{lemma}\label{lem:invariance}
Suppose that, at some $t_1<\Tmax$,
$-\mu\le u(t_1)\le M$, where $\mu\ge0$ and $0\le M<\xi_a$.
Then $\Tmax=\infty$, these pointwise bounds remain invariant,
\begin{equation}\label{eq:invariant-strip}
 -\mu\le u(\cdot,t)\le M
 \qquad\text{a.e.\ in }\Om,\quad t\ge t_1,
\end{equation}
and
\begin{equation}\label{eq:envelope}
 \|u(t)\|_\infty
 \le\max\{M,\mu\}e^{-c_M(t-t_1)}
 \qquad(t\ge t_1).
\end{equation}
\end{lemma}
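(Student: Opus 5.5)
Our plan is to compare $u$ on $[t_1,\Tmax)$ with two spatially constant solutions of \eqref{eq:ODE}, using Lemma~\ref{lem:comparison}. Let $\overline y$ solve $\overline y'=f(\overline y)$ with $\overline y(t_1)=M$, and let $\underline y$ solve the same equation with $\underline y(t_1)=-\mu$. By Lemma~\ref{lem:scalar}, $f<0$ on $(0,\xi_a)$ and $f>0$ on $(-\infty,0)$. Hence $\overline y$ decreases monotonically from $M$ to $0$ (it is constant if $M=0$), and $\underline y$ increases monotonically from $-\mu$ to $0$. Both are global classical solutions and therefore spatially constant classical sub/supersolutions. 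Restarting the problem at $t_1$ and applying Lemma~\ref{lem:comparison}, we obtain
\[
 \underline y(t)\le u(\cdot,t)\le\overline y(t)
\]
a.e. on the common existence interval $[t_1,\Tmax)$. Monotonicity of $\overline y$ and $\underline y$ then gives \eqref{eq:invariant-strip} on $[t_1,\Tmax)$.

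Next we exclude finite-time blow-up. On $[t_1,\Tmax)$ we have $\|u(t)\|_\infty\le\max\{M,\mu\}$. If $\Tmax<\infty$, this contradicts the $L^\infty$ continuation criterion \eqref{eq:Linf-continuation} of Proposition~\ref{prop:lwp}. Hence $\Tmax=\infty$, and \eqref{eq:invariant-strip} holds for all $t\ge t_1$.

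For the decay estimate \eqref{eq:envelope}, we use \eqref{eq:sfs} with the given $M$. Both $\overline y$ and $\underline y$ take values in $[-\mu,M]\subset(-\infty,M]$, so
\[
 \tfrac12\frac{d}{dt}y^2=yf(y)\le-c_My^2
\]
for $y\in\{\overline y,\underline y\}$. This yields $|y(t)|\le|y(t_1)|e^{-c_M(t-t_1)}$. Here we use the piecewise definition of $c_M$: for $M=0$ the case $M\le0$ gives $c_M=a-1$, and \eqref{eq:sfs} applies to all $s\le0$. Combining this with the pointwise sandwich gives
\[
 \|u(t)\|_\infty\le\max\{|\overline y(t)|,|\underline y(t)|\}\le\max\{M,\mu\}e^{-c_M(t-t_1)}.
\]

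The only delicate point is justifying the comparison from $t_1$ when $t_1>0$ and the datum $u(t_1)$ lies merely in $H^1\cap L^\infty$. Two facts handle this. First, Lemma~\ref{lem:comparison} explicitly covers spatially constant classical sub/supersolutions. Second, by uniqueness of the maximal solution, $u(\cdot+t_1)$ is the solution with datum $u(t_1)$. The case $\mu=0$ needs no separate treatment: then $\underline y\equiv0$ is itself an equilibrium, and the argument goes through unchanged.
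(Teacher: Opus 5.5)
Your proposal is correct and follows essentially the same route as the paper. You sandwich $u$ between the scalar orbits started at $-\mu$ and $M$ via Lemma~\ref{lem:comparison}, exclude finite-time breakdown with \eqref{eq:Linf-continuation}, and get the decay rate by applying \eqref{eq:sfs} to each scalar envelope; the only difference is the order in which you present these steps.
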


\begin{proof}
Let $\overline y$ and $\underline y$ solve \eqref{eq:ODE} with
$\overline y(t_1)=M$ and $\underline y(t_1)=-\mu$.
By Lemma~\ref{lem:scalar}, both are global and satisfy
$-\mu\le\underline y(t)\le0\le\overline y(t)\le M$.
Lemma~\ref{lem:comparison} gives
$\underline y(t)\le u(x,t)\le\overline y(t)$
while the PDE exists. 

For either scalar orbit, \eqref{eq:sfs} gives
\[
 \frac12\frac{d}{dt}|y|^2=y f(y)\le -c_M|y|^2 ,
\]
whence $|\underline y(t)|\le\mu e^{-c_M(t-t_1)}$ and $|\overline y(t)|\le Me^{-c_M(t-t_1)}$.
This proves \eqref{eq:envelope}. The pointwise comparison bounds and
\eqref{eq:Linf-continuation} exclude finite-time breakdown.
\end{proof}

\begin{theorem}\label{thm:stab}
Let $q\in(1,2]$ and suppose that
$\eps>\eps_{\mathrm{stab}}^{(q)}(R,\delta)$.
Every initial datum satisfying \eqref{eq:data} generates a global solution
converging exponentially to $0$ in $L^\infty(\Om)$ and $H^1(\Om)$. More precisely, with
$M=\xi_a-\delta/2$, $\mu=R/\sqrt{|\Om|}+\delta/2$, and
$B=\max\{M,\mu\}$, one has
\begin{align}
 \|u(t)\|_\infty
 &\le B e^{-c_M(t-t_\eps)}
 &&(t\ge t_\eps),
 \label{eq:stab-Linf}\\
 \|u(t)\|_{H^1}
 &\le C(R,\delta)e^{-c_M(t-t_\eps)}
 &&(t\ge t_\eps+1).
 \label{eq:stab-H1}
\end{align}
The constant in \eqref{eq:stab-H1} is independent of the admissible datum
and of $\eps>\eps_{\mathrm{stab}}^{(q)}(R,\delta)$.
\end{theorem}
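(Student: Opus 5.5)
The argument has two parts: first reach the invariant strip and obtain the $L^\infty$ decay, then upgrade that decay to $H^1$ by parabolic smoothing.

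\emph{Step 1: entry into the strip and $L^\infty$ decay.} By Proposition~\ref{prop:entry}, the solution exists up to $t_\eps$ and satisfies \eqref{eq:entry-strip}. So at $t_1=t_\eps$ we have $-\mu\le u(t_\eps)\le M$, with $\mu=R/\sqrt{|\Om|}+\delta/2\ge0$ and $M=\xi_a-\delta/2$. Since $0\le M<\xi_a$, Lemma~\ref{lem:invariance} applies. It gives $\Tmax=\infty$, invariance of the strip for $t\ge t_\eps$, and the envelope bound \eqref{eq:envelope}, which is exactly \eqref{eq:stab-Linf}. The constant $c_M$ depends only on $a$ and $\delta$.

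\emph{Step 2: $H^1$ decay.} It is convenient to work in the original time variable. The key point is that the uniformity in $\eps$ must come from writing the gradient estimate so that large $\eps$ only helps.

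Set $w=u(t)$ for $t\ge t_\eps$. We have $|w|\le B$ and $|f(w)|\le L|w|$, where $L=\sup_{|s|\le B}|f'(s)|$ depends only on $a$, $R$ and $\delta$. Fix $t\ge t_\eps+1$ and write the mild formula \eqref{eq:mild} starting from $t-1$:
\[
 u(t)=e^{\eps\Delta_N}u(t-1)+\int_{t-1}^t e^{\eps(t-s)\Delta_N}f(u(s))\,ds .
\]
By \eqref{eq:semigroup},
\[
 \|\nabla e^{\eps\tau\Delta_N}g\|_2\le C_{\mathrm{sm}}\bigl(1+(\eps\tau)^{-1/2}\bigr)\|g\|_2 .
\]
Since $\eps\ge1$ by \eqref{eq:eps-stab}, we have $(\eps\tau)^{-1/2}\le\tau^{-1/2}$. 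Therefore
\[
 \|\nabla u(t)\|_2\le 2C_{\mathrm{sm}}\|u(t-1)\|_2
 +C_{\mathrm{sm}}\int_{t-1}^t\bigl(1+(t-s)^{-1/2}\bigr)L\|u(s)\|_2\,ds .
\]
Now insert $\|u(s)\|_2\le|\Om|^{1/2}Be^{-c_M(s-t_\eps)}$ from Step 1. On the interval $[t-1,t]$ this is at most $|\Om|^{1/2}Be^{c_M}e^{-c_M(t-t_\eps)}$. This gives
\[
 \|\nabla u(t)\|_2\le C(R,\delta)e^{-c_M(t-t_\eps)} .
\]
Combining with the $L^2$ bound from Step 1 yields \eqref{eq:stab-H1}.

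\emph{Main obstacle.} The only delicate point is that the constant in \eqref{eq:stab-H1} must not depend on $\eps$. In the argument above this is ensured because $\eps\ge1$ makes the smoothing factor at most $\tau^{-1/2}$. Likewise, Step 1 holds with constants $B$ and $c_M$ that depend only on $a$, $R$, $\delta$ and $\Om$, not on $\eps$ or on the particular datum.
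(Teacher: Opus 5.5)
Your proposal is correct and follows essentially the same route as the paper: Proposition~\ref{prop:entry} and Lemma~\ref{lem:invariance} give the $L^\infty$ decay, and a one-step Duhamel restart at $t-1$ gives the $H^1$ decay, using \eqref{eq:semigroup}, the fact that $\eps>1$ (which makes the smoothing factor independent of $\eps$), and a linear bound $|f(s)|\le L|s|$ on the invariant strip. The only cosmetic difference is that you take the Lipschitz constant over $[-B,B]$ rather than over $(-\infty,M]$, which works equally well.
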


\begin{proof}
Proposition~\ref{prop:entry} and Lemma~\ref{lem:invariance} give global
existence, \eqref{eq:invariant-strip}, and \eqref{eq:stab-Linf}. Since
$f(0)=0$ and
$L_M=\sup_{s\le M}|f'(s)|<\infty$,
one has $|f(s)|\le L_M|s|$ for $s\le M$. In particular, the upper bound
in \eqref{eq:invariant-strip} makes this estimate applicable pointwise to
$u(t-r)$. For $t\ge t_\eps+1$, restart
the mild formula at $t-1$. Since $\eps>1$,
$1+(\eps r)^{-1/2}\le1+r^{-1/2}$ for $0<r\le1$.
Using \eqref{eq:semigroup} and \eqref{eq:stab-Linf},
\[
 \begin{aligned}
 \|u(t)\|_{H^1}
 &\le 2C_{\mathrm{sm}}\|u(t-1)\|_2+C_{\mathrm{sm}}\int_0^1
       (1+r^{-1/2})\|f(u(t-r))\|_2\,dr\\
 &\le C(R,\delta)e^{-c_M(t-t_\eps)}.
 \end{aligned}
\]
This proves \eqref{eq:stab-H1}.
\end{proof}

In particular,
\[
 \eps_{\mathrm{unif}}(R,\delta)
 \le
 \inf_{1<q\le2}\eps_{\mathrm{stab}}^{(q)}(R,\delta).
\]
The choice of $q$ will be optimized in Section~\ref{sec:sharp}.

The preceding theorem also clarifies why a fixed gap below the scalar
threshold is necessary for uniform stabilization.

\begin{corollary}\label{cor:gap}
If $R>\xi_a|\Om|^{1/2}$, then
\[
 \eps_{\mathrm{unif}}(R,\delta)\longrightarrow\infty
 \qquad(\delta\downarrow0).
\]
\end{corollary}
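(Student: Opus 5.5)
The plan is to argue by contradiction, using the critical case of Proposition~\ref{prop:mean-threshold} together with continuous dependence (Proposition~\ref{prop:lwp}). Suppose there were $\delta_n\downarrow0$ and a finite $E$ with $\eps_{\mathrm{unif}}(R,\delta_n)\le E$ for all $n$. Fix $\eps>E$. Then $\eps$ stabilizes every $\mathcal A(R,\delta_n)$. The goal is to find a single nonconstant datum $w_0$ with $\|w_0\|_{H^1}<R$ and $\ave{w_0}=\xi_a$. Such a datum blows up at diffusivity $\eps$ by Proposition~\ref{prop:mean-threshold}(2), so $T:=\Tmax(w_0,\eps)<\infty$.

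\textbf{Step 1: the construction.} Here the hypothesis $R>\xi_a|\Om|^{1/2}$ enters, because it gives $\|\xi_a\|_{H^1}=\xi_a|\Om|^{1/2}<R$. I would take $w_0=\xi_a+\eta\phi$, where $\phi$ is a smooth nonconstant function with $\ave{\phi}=0$ and $\eta>0$ is small enough that $\|w_0\|_{H^1}<R$. Then I set $w_{0,n}=w_0-\delta_n$. For large $n$ we have $\ave{w_{0,n}}=\xi_a-\delta_n$ and $\|w_{0,n}\|_{H^1}\le R$, so $w_{0,n}\in\mathcal A(R,\delta_n)$. Hence each solution $w_n$ is global and converges to $0$.

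\textbf{Step 2: the contradiction.} This is the main point and relies on continuous dependence. Choose $t_0\in(0,T)$ with $\ave{w(t_0)}>\xi_a$; this is possible by the proof of Proposition~\ref{prop:mean-threshold}(2), since strict Jensen makes the mean derivative positive at $t=0$. Continuous dependence in $C([0,t_0];H^1)$ gives $w_n(t_0)\to w(t_0)$ in $H^1$, and therefore $\ave{w_n(t_0)}\to\ave{w(t_0)}>\xi_a$. So for large $n$ the solution $w_n$ has a supercritical mean at time $t_0$. Restarting there, Proposition~\ref{prop:mean-threshold}(1) shows that $w_n$ blows up in finite time, contradicting global existence. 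Hence $\eps_{\mathrm{unif}}(R,\delta)$ cannot stay bounded along any sequence $\delta_n\downarrow0$.

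A simpler variant avoids time evolution altogether. Since $\eps_{\mathrm{unif}}(R,\delta)$ is nonincreasing in $\delta$ (the sets $\mathcal A(R,\delta)$ shrink as $\delta$ grows), boundedness would give one $\eps$ stabilizing the union $\bigcup_\delta\mathcal A(R,\delta)$. One could then hope to argue via relative openness. The continuity argument above, however, is direct and only uses stated results.

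The only delicate point is making sure the perturbed data stay in the $H^1$ ball of radius $R$. This is handled by the strict inequality $\|w_0\|_{H^1}<R$: the norm $\|w_0-\delta_n\|_{H^1}$ tends to $\|w_0\|_{H^1}$, so it is below $R$ for large $n$.
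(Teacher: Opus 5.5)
Your proposal is correct and follows the paper's argument. You take a nonconstant critical-mean datum $\xi_a+\eta\phi$, which lies inside the ball because $R>\xi_a|\Om|^{1/2}$, shift it down by $\delta$, and use continuous dependence to carry the strict supercritical mean crossing at a fixed positive time over to data in $\mathcal A(R,\delta)$; this forces blow-up at the prescribed diffusivity. The only difference is cosmetic: you argue by contradiction along a sequence $\delta_n\downarrow0$, whereas the paper fixes an arbitrary $\eps_0$ and shows directly that $\eps_{\mathrm{unif}}(R,\delta)\ge\eps_0$ for all sufficiently small $\delta$.
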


\begin{proof}
Choose a nonzero smooth $\phi$ with $\ave{\phi}=0$ and
$\|\xi_a+\phi\|_{H^1}<R$. Fix $\eps_0>0$. At diffusivity $\eps_0$, the
solution with initial datum $\xi_a+\phi$ has critical mean and is
nonconstant, so Proposition~\ref{prop:mean-threshold} shows that its
mean exceeds $\xi_a$ at some time $t_{*}>0$. By continuous
dependence, the same strict crossing holds for the solution from
$u_{0,\delta}=\xi_a-\delta+\phi$ when $\delta>0$ is sufficiently small.
Moreover, $\|u_{0,\delta}\|_{H^1}<R$ and $\ave{u_{0,\delta}}=\xi_a-\delta$, 
so $u_{0,\delta}\in\mathcal A(R,\delta)$.
Thus stabilization fails at $\eps=\eps_0$, so
$\eps_{\mathrm{unif}}(R,\delta)\ge\eps_0$ for all sufficiently small
$\delta$. Since $\eps_0$ is arbitrary, the conclusion follows.
\end{proof}

\section{Oscillation contraction and the global trichotomy}\label{sec:trichotomy}

We turn from the quantitative stabilization framework of
Section~\ref{sec:stab} to the global dynamics on a bounded $H^1$ ball. The
aim is to show that sufficiently large diffusion leaves no additional bounded
asymptotic behavior beyond that of the scalar equation. The two complementary
ingredients are a quantitative contraction of spatial oscillations and
an energy barrier for trajectories whose mean never crosses $\xi_a$.
The barrier places every global orbit in the regime where the contraction is
uniform; positive-time compactness and energy dissipation then identify the
remaining scalar limit.

\subsection{Contraction of spatial oscillations}
\label{ssec:oscillation-contraction}

\begin{lemma}\label{lem:oscillation-estimate}
There exists $C_{\mathrm{ctr}}\ge1$, depending only on $\Om$, with the
following property.  For every $G\ge0$ and $v\in H^1(\Om)$ with
$\ave{v}=0$ and $\|\nabla v\|_2\le G$, one has
\begin{equation}\label{eq:nonlinear-oscillation-bound}
\int_\Om(e^v-1)v\,dx
 \le
 C_{\mathrm{ctr}}(1+G^2)
 \exp\left(\frac{G^2}{8\pi}\right)
 \|\nabla v\|_2^2.
\end{equation}
\end{lemma}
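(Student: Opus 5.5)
The plan is to use the elementary pointwise bound $(e^s-1)s\le s^2e^{|s|}$, which follows from the mean value theorem: $|e^s-1|\le|s|e^{\max\{s,0\}}\le|s|e^{|s|}$. This gives
\[
 \int_\Om(e^v-1)v\,dx\le\int_\Om v^2e^{|v|}\,dx .
\]
Next we apply H\"older's inequality with a conjugate pair $(p,p')$, where $p'$ is close to $1$:
\[
 \int_\Om v^2e^{|v|}\,dx\le\|v\|_{2p}^2\,\|e^{|v|}\|_{p'}.
\]
For the exponential factor, Lemma~\ref{lem:expint} with $\beta=p'$ gives
\[
 \|e^{|v|}\|_{p'}\le(C_{\mathrm{MT}}|\Om|)^{1/p'}\exp\left(\frac{p'}{8\pi}G^2\right).
\]
For the polynomial factor, Lemma~\ref{lem:Lr-gradient} with $r=2p$ gives $\|v\|_{2p}^2\le Cp\|\nabla v\|_2^2$. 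Together these yield
\[
 \int_\Om(e^v-1)v\,dx\le Cp\,\exp\left(\frac{G^2}{8\pi}+\frac{(p'-1)G^2}{8\pi}\right)\|\nabla v\|_2^2 .
\]

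The key step is choosing $p$ so that the excess exponent stays bounded while the factor $p$ costs only a polynomial in $G$. Since $p'-1=1/(p-1)$, take $p=1+\max\{1,G^2\}$. Then $(p'-1)G^2\le 1$ when $G\ge1$. For $G<1$ we have $p=2=p'$, so the excess is at most $G^2/(8\pi)\le1$. In both cases the excess factor is bounded by $e^{1/(8\pi)}$. Moreover $p\le 2+G^2\le 2(1+G^2)$, and $(C_{\mathrm{MT}}|\Om|)^{1/p'}\le\max\{1,C_{\mathrm{MT}}|\Om|\}$ uniformly in $p'\in(1,2]$.

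This produces \eqref{eq:nonlinear-oscillation-bound} with $C_{\mathrm{ctr}}$ depending only on $\Om$, and we may enlarge it to ensure $C_{\mathrm{ctr}}\ge1$. The case $v\equiv0$ is trivial.

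The main obstacle is the exponent optimisation. A fixed H\"older exponent would give the wrong coefficient $p'/(8\pi)$ in the exponential, so $p'$ must approach $1$ at rate $G^{-2}$. The point to verify is that the constant in Lemma~\ref{lem:Lr-gradient} grows only linearly in $r$, which makes the cost exactly the factor $1+G^2$. That linear growth is what the lemma provides, so the argument should close.
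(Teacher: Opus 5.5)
Your proof is correct and follows the paper's argument: the pointwise bound $(e^s-1)s\le s^2e^{|s|}$, then H\"older with the exponential factor placed in an $L^{p'}$ space whose exponent $p'$ approaches $1$ at rate $G^{-2}$, then Lemma~\ref{lem:expint} for the exponential factor and Lemma~\ref{lem:Lr-gradient} for the polynomial factor. The only difference is cosmetic: the paper uses the conjugate pair $1+(1+G^2)^{-1}$ and $G^2+2$, which avoids your case split at $G=1$.
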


\begin{proof}
Since $0\le(e^s-1)s\le s^2e^{|s|}$,
it suffices to estimate $\int v^2e^{|v|}$. Put
$p=1+(1+G^2)^{-1}$ and $p'=G^2+2$,
so that $p$ and $p'$ are conjugate. By H\"older's inequality,
Lemma~\ref{lem:Lr-gradient}, and Lemma~\ref{lem:expint}, with
$X=\|\nabla v\|_2^2$, we obtain
\[
 \int_\Om v^2e^{|v|}\,dx
 \le \|v\|_{2p'}^2
       \left(\int_\Om e^{p|v|}\,dx\right)^{1/p}
 \le C p'X
       \exp\left(\frac{pX}{8\pi}\right).
\]
Here and below, constants depending only on $\Om$ absorb bounded powers of
$C_{\mathrm{MT}}|\Om|$. Since $p'\le2(1+G^2)$ and
$pX\le pG^2\le G^2+1$, after enlarging the constant, we obtain
\eqref{eq:nonlinear-oscillation-bound}.
\end{proof}

With the constant fixed in Lemma~\ref{lem:oscillation-estimate}, define
\begin{equation}\label{eq:eps-osc}
 \eps_{\mathrm{osc}}(G,\overline{m})
 =C_{\mathrm{ctr}}(1+G^2)
  \exp\left(\overline{m}+\frac{G^2}{8\pi}\right).
\end{equation}

\begin{theorem}\label{thm:oscillation-contraction}
Let $u$ be a solution of \eqref{eq:P} on $[0,T)$ and suppose that
\begin{equation}\label{eq:oscillation-bounds}
 \sup_{0\le t<T}\|\nabla u(t)\|_2\le G,
 \qquad
 \sup_{0\le t<T}\ave{u(t)}\le \overline{m}.
\end{equation}
If $\eps>\eps_{\mathrm{osc}}(G,\overline{m})$ then, 
with $\gamma=(\eps-\eps_{\mathrm{osc}}(G,\overline{m}))\lambda_2+a>0$,
one has
\begin{equation}\label{eq:oscillation-contraction}
 \|u^\perp(t)\|_2^2
 \le
 e^{-2\gamma(t-s)}
 \|u^\perp(s)\|_2^2
 \qquad(0\le s\le t<T).
\end{equation}
In particular, if $T=\infty$, then the spatial oscillation decays
exponentially in $L^2(\Om)$.
\end{theorem}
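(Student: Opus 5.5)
The plan is to run a direct $L^2$ energy estimate for the oscillation $w=u^\perp=u-\ave{u}$, using Lemma~\ref{lem:oscillation-estimate} to absorb the exponential part of the reaction into the diffusion. Write $m(t)=\ave{u(t)}$. On any interval $[\tau,T')$ with $0<\tau<T'<T$, Proposition~\ref{prop:lwp} gives $u\in C([\tau,T'];H^2_N)\cap C^1([\tau,T'];L^2)$, and Lemma~\ref{lem:mean} gives $m\in C^1$. Hence $w$ is a strong solution of
\[
 w_t-\eps\Delta w=f(u)-\ave{f(u)},\qquad \partial_\nu w=0 .
\]

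First I would test this equation with $w$. Since $\int_\Om w\,dx=0$, the mean term drops out, and integration by parts gives
\[
 \frac12\frac{d}{dt}\|w\|_2^2+\eps\|\nabla w\|_2^2=\int_\Om f(u)w\,dx .
\]
Next I split $f(u)=(e^u-1)-au$.

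The linear part contributes $-a\int_\Om uw=-a\|w\|_2^2$, again because $\int_\Om w=0$.

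For the exponential part I use $e^u=e^{m}e^{w}$ together with $\int_\Om w=0$ twice:
\[
 \int_\Om(e^u-1)w\,dx=\int_\Om e^u w\,dx=e^{m}\int_\Om e^{w}w\,dx=e^{m}\int_\Om(e^w-1)w\,dx .
\]
The last integrand $(e^w-1)w$ is nonnegative. Since $\nabla w=\nabla u$, the hypothesis \eqref{eq:oscillation-bounds} gives $\|\nabla w\|_2\le G$ and $m\le\overline m$. Lemma~\ref{lem:oscillation-estimate} then bounds this term by
\[
 e^{\overline m}C_{\mathrm{ctr}}(1+G^2)e^{G^2/(8\pi)}\|\nabla w\|_2^2=\eps_{\mathrm{osc}}(G,\overline m)\|\nabla w\|_2^2 .
\]

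Combining the two parts gives
\[
 \frac12\frac{d}{dt}\|w\|_2^2\le-(\eps-\eps_{\mathrm{osc}})\|\nabla w\|_2^2-a\|w\|_2^2 .
\]
Because $\eps-\eps_{\mathrm{osc}}>0$, the Poincaré inequality $\|\nabla w\|_2^2\ge\lambda_2\|w\|_2^2$ turns this into
\[
 \frac{d}{dt}\|w\|_2^2\le-2\gamma\|w\|_2^2 .
\]
Gronwall's inequality then yields \eqref{eq:oscillation-contraction} for $\tau\le s\le t<T$. Since $\tau>0$ is arbitrary, it holds for all $0<s\le t<T$.

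To include $s=0$, I let $s\downarrow0$ and use $u\in C([0,T);H^1)$, which gives $w(s)\to u_0^\perp$ in $L^2$. The case $T=\infty$ is immediate.

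The main obstacle is the identity $\int(e^u-1)w=e^{m}\int(e^w-1)w$. It is what lets the mean enter only through the factor $e^{\overline m}$, and it produces the nonnegative quantity that Lemma~\ref{lem:oscillation-estimate} controls. Once that identity is in place, the rest is routine, apart from the regularity bookkeeping near $t=0$ handled above.
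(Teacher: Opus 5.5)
Your proposal is correct and matches the paper's proof step for step. Both test the oscillation equation with $u^\perp$, rewrite the exponential contribution as $e^{\ave{u}}\int_\Om(e^{u^\perp}-1)u^\perp\,dx$, bound it with Lemma~\ref{lem:oscillation-estimate} and the Poincar\'e inequality, integrate on compact subintervals of $(0,T)$, and reach $s=0$ by $H^1$ continuity. Your explicit remark that $(e^w-1)w\ge0$ is what allows replacing $e^{\ave{u}}$ by $e^{\overline m}$ makes precise a step the paper leaves implicit.
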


\begin{proof}
Set $X(t)=\|\nabla u(t)\|_2^2$ and $Y(t)=\|u^\perp(t)\|_2^2$.  Positive-time
regularity from Proposition~\ref{prop:lwp} allows the following calculation on
compact subintervals of $(0,T)$.  Subtracting the spatial mean from the
equation gives
\[
 u^\perp_t-\eps\Delta u^\perp
 =e^{\ave{u}}\bigl(e^{u^\perp}-\ave{e^{u^\perp}}\bigr)-au^\perp.
\]
Testing by $u^\perp$ and using $\ave{u^\perp}=0$ yields, for almost every $t\in(0,T)$,
\[
 \frac12Y'+\eps X+aY
 =e^{\ave{u}}\int_\Om(e^{u^\perp}-1)u^\perp\,dx.
\]
By \eqref{eq:oscillation-bounds} and
Lemma~\ref{lem:oscillation-estimate},
\[
 e^{\ave{u}}\int_\Om(e^{u^\perp}-1)u^\perp\,dx
 \le\eps_{\mathrm{osc}}(G,\overline{m})X.
\]
Poincar\'e inequality therefore gives
\[
 \frac12Y'+\gamma Y\le0.
\]
Integration gives \eqref{eq:oscillation-contraction} for
$0<s\le t<T$, and continuity of $u$ in $H^1(\Om)$ extends it to $s=0$.
\end{proof}

\subsection{An energy barrier below the scalar threshold}
\label{ssec:energy-barrier}

The contraction becomes uniform on a global trajectory once its
gradient and mean remain in a bounded region of phase space. The following
barrier provides precisely this control under the sole dynamical condition that
the spatial mean never crosses the scalar threshold.

For $R>0$, set
$C_{\mathrm{en}}(R)=|\Om|+|\Om|^{1/2}R+\frac a2R^2$
and $K_{\mathrm{MT}}=C_{\mathrm{MT}}|\Om|e^{\xi_a}$,
and define
\begin{equation}\label{eq:eps-en}
 \eps_{\mathrm{en}}(R)
 =2C_{\mathrm{en}}(R)
 +2K_{\mathrm{MT}}
   \exp\left(\frac{R^2+1}{8\pi}\right).
\end{equation}

\begin{lemma}\label{lem:energy-barrier}
Let $\|u_0\|_{H^1}\le R$ and suppose that
\begin{equation}\label{eq:mean-below-all-time}
 \ave{u(t)}\le\xi_a
 \qquad(0\le t<\Tmax).
\end{equation}
If $\eps>\eps_{\mathrm{en}}(R)$, then $\Tmax=\infty$ and
\begin{equation}\label{eq:gradient-barrier}
 \sup_{t\ge0}\|\nabla u(t)\|_2^2<R^2+1.
\end{equation}
Moreover,
\begin{equation}\label{eq:Rsharp}
 \sup_{t\ge0}\|u(t)\|_{H^1}
 \le R_*,
\end{equation}
where
\[
 R_*
 =\max\left\{|\Om|^{1/2}\xi_a,R\right\}
 +(1+\lambda_2^{-1/2})\sqrt{R^2+1}.
\]
\end{lemma}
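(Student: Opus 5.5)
The plan is a continuity argument driven by the energy identity of Lemma~\ref{lem:energy}. Since $E_\eps$ is nonincreasing,
\[
 \frac\eps2\|\nabla u(t)\|_2^2\le E_\eps(u_0)+\int_\Om F(u(t))\,dx ,
\]
so two bounds are needed. First, an upper bound for $E_\eps(u_0)$ in terms of $R$. Second, an upper bound for $\int_\Om F(u(t))$ in terms of the current gradient, using the mean constraint \eqref{eq:mean-below-all-time}. For the initial energy, note that $-F(s)=-e^s+1+s+\frac a2s^2\le 1+|s|+\frac a2 s^2$. Hence
\[
 -\int_\Om F(u_0)\le|\Om|+|\Om|^{1/2}\|u_0\|_2+\frac a2\|u_0\|_2^2\le C_{\mathrm{en}}(R),
\]
and therefore $E_\eps(u_0)\le\frac\eps2R^2+C_{\mathrm{en}}(R)$.

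For the reaction term, I will use $F(s)\le e^s$, which holds because $-1-s-\frac a2s^2\le0$: the discriminant is $1-2a<0$. Combining $\ave{u(t)}\le\xi_a$ with \eqref{eq:Lq} at $q=1$ gives
\[
 \int_\Om F(u(t))\le\int_\Om e^{u(t)}\le K_{\mathrm{MT}}\exp\left(\frac{\|\nabla u(t)\|_2^2}{8\pi}\right).
\]
Next, set $X(t)=\|\nabla u(t)\|_2^2$ and let $t^*$ be the first time (if any) with $X(t^*)=R^2+1$. This time exists by continuity of $u$ in $H^1$, because $X(0)\le R^2<R^2+1$. On $[0,t^*]$ we have $X\le R^2+1$, and so
\[
 \frac\eps2X(t^*)\le\frac\eps2R^2+C_{\mathrm{en}}(R)+K_{\mathrm{MT}}e^{(R^2+1)/8\pi}
 =\frac\eps2R^2+\frac{\eps_{\mathrm{en}}(R)}2<\frac\eps2(R^2+1).
\]
This contradicts $X(t^*)=R^2+1$. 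So $X(t)<R^2+1$ for all $t<\Tmax$. Moreover, at any $t$ the same inequality gives the uniform bound $X(t)\le R^2+\eps_{\mathrm{en}}/\eps$, which is strictly less than $R^2+1$. Taking the supremum therefore still gives strict inequality in \eqref{eq:gradient-barrier}.

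For the $H^1$ bound, the mean needs a lower bound as well as the assumed upper bound $\xi_a$. I will use Lemma~\ref{lem:mean}: $m'\ge f(m)$, and $f>0$ on $(-\infty,0)$. Comparison with the scalar ODE then shows that if $m(0)<0$ the mean stays at least $m(0)\ge-R|\Om|^{-1/2}$, and if $m(0)\ge0$ it stays nonnegative. Hence $|\ave{u(t)}|\le\max\{\xi_a,R|\Om|^{-1/2}\}$. Now write
\[
 \|u\|_{H^1}\le\|\ave u\|_2+\|u^\perp\|_2+\|\nabla u\|_2 .
\]
The first term is at most $|\Om|^{1/2}|\ave u|\le\max\{|\Om|^{1/2}\xi_a,R\}$. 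The remaining two are at most $(\lambda_2^{-1/2}+1)\sqrt{R^2+1}$ by Poincaré's inequality. This gives $R_*$.

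Global existence follows because $u$ stays bounded in $H^1$ for all $t<\Tmax$, which by \eqref{eq:blowup-alt} forces $\Tmax=\infty$.

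The main point needing care is the barrier step. It requires the strict inequality $\eps>\eps_{\mathrm{en}}(R)$ and the factor $2$ in \eqref{eq:eps-en} to fit exactly, and it uses the energy identity down to $t_0=0$, which Lemma~\ref{lem:energy} provides. The rest is routine.
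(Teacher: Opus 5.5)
Your proposal is correct and follows the paper's proof essentially step for step: you use the same initial-energy bound via $C_{\mathrm{en}}(R)$, the same estimate $F(s)\le e^s$ combined with \eqref{eq:Lq} at $q=1$, a first-hitting-time barrier, the scalar comparison $\ave{u(t)}\ge\min\{\ave{u_0},0\}$, Poincar\'e's inequality, and the continuation criterion. The only cosmetic difference is how you get strictness in \eqref{eq:gradient-barrier}: you use the explicit a posteriori bound $\|\nabla u(t)\|_2^2\le R^2+\eps_{\mathrm{en}}(R)/\eps$, whereas the paper runs the hitting-time argument at an intermediate level $d_*\in(R^2,R^2+1)$ with $\Theta(d_*)>0$; both work.
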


\begin{proof}
Set $X(t)=\|\nabla u(t)\|_2^2$.
By continuity of the solution in $H^1$, $X$ is continuous on
$[0,\Tmax)$. Since
\[
 -F(s)=-e^s+1+s+\frac a2 s^2\le 1+s+\frac a2 s^2,
\]
Lemma~\ref{lem:energy} and the initial $H^1$ bound give
\[
 E_\eps(u(t))
 \le E_\eps(u_0)
 \le\frac\eps2R^2+C_{\mathrm{en}}(R).
\]
On the other hand,
\[
 1+s+\frac a2 s^2\ \ge\ 1-\frac1{2a}\ >\ 0
 \qquad(s\in\R),
\]
so that $F(s)\le e^s$. Hence
\[
 \frac{\eps}{2}X(t)
 =E_\eps(u(t))+\int_\Om F(u(t))\,dx
 \le\frac\eps2R^2+C_{\mathrm{en}}(R)
      +\int_\Om e^{u(t)}\,dx.
\]
By \eqref{eq:Lq} with $q=1$ and
\eqref{eq:mean-below-all-time},
\[
 \int_\Om e^{u(t)}\,dx
 \le C_{\mathrm{MT}}|\Om|
      \exp\left(\xi_a+\frac{X(t)}{8\pi}\right)
 =K_{\mathrm{MT}}e^{X(t)/(8\pi)}.
\]
Therefore
\begin{equation}\label{eq:energy-barrier-ineq}
 \frac{\eps}{2}X(t)
 \le
 \frac\eps2R^2+C_{\mathrm{en}}(R)
 +K_{\mathrm{MT}}e^{X(t)/(8\pi)}.
\end{equation}

Define
\[
 \Theta(d)
 =\frac\eps2(d-R^2)
  -C_{\mathrm{en}}(R)-K_{\mathrm{MT}}e^{d/(8\pi)}.
\]
The strict inequality $\eps>\eps_{\mathrm{en}}(R)$ implies
$\Theta(R^2+1)>0$. By continuity, choose
$d_*\in(R^2,R^2+1)$ with $\Theta(d_*)>0$. Since
$X(0)\le R^2<d_*$, if $X$ reached $d_*$, then at its first hitting time
\eqref{eq:energy-barrier-ineq} would give $\Theta(d_*)\le0$, a
contradiction. Thus
\begin{equation}\label{eq:D-dstar}
 \sup_{0\le t<\Tmax}X(t)\le d_*<R^2+1.
\end{equation}

It remains to control the mean from below. Let $y$ solve the scalar equation
\eqref{eq:ODE} with $y(0)=\ave{u_0}$. By
Lemma~\ref{lem:mean}, $\ave{u(t)}\ge y(t)$. The scalar phase portrait shows
\[
 y(t)\ge\min\{\ave{u_0},0\}
 \ge-\frac R{\sqrt{|\Om|}}.
\]
Together with \eqref{eq:mean-below-all-time}, this gives
\[
 -\frac R{\sqrt{|\Om|}}
 \le\ave{u(t)}\le\xi_a.
\]
Hence, by Poincar\'e inequality and \eqref{eq:D-dstar},
\[
 \begin{aligned}
 \|u(t)\|_{H^1}
 &\le\|u(t)\|_2+\|\nabla u(t)\|_2\\
 &\le |\Om|^{1/2}|\ave{u(t)}|
     +(1+\lambda_2^{-1/2})\|\nabla u(t)\|_2\\
 &\le R_*.
 \end{aligned}
\]
The continuation criterion \eqref{eq:blowup-alt} now excludes finite-time
breakdown. This proves global existence, \eqref{eq:gradient-barrier}, and
\eqref{eq:Rsharp}.
\end{proof}

Define the sufficient dynamical threshold
\begin{equation}\label{eq:eps-dyn}
 \eps_{\mathrm{dyn}}(R)
 =\max\left\{
   \eps_{\mathrm{en}}(R),
   \eps_{\mathrm{osc}}\bigl(\sqrt{R^2+1},\xi_a\bigr),
   1
  \right\}.
\end{equation}
By \eqref{eq:eps-en} and \eqref{eq:eps-osc}, there exists
$C=C(a,\Om)>0$ such that
\begin{equation}\label{eq:eps-dyn-size}
 \eps_{\mathrm{dyn}}(R)
 \le C(1+R)^2
      \exp\left(\frac{R^2}{8\pi}\right)
 \qquad(R\ge1).
\end{equation}

\subsection{Compactness and stationary limit sets}\label{ssec:omega}

The energy barrier gives a uniform $H^1$ bound for every global trajectory in
the large-diffusion regime. Positive-time smoothing upgrades this bound to
precompactness in a space carrying the uniform topology, and energy
dissipation then forces every $\omega$-limit point to be stationary.

For the remainder of this section, we define
\[
 \mathcal X=H^1(\Om)\cap C(\overline\Om),
 \qquad
 \|z\|_{\mathcal X}=\|z\|_{H^1}+\|z\|_\infty.
\]

\begin{lemma}\label{lem:positive-compactness}
Let $u$ be a global solution satisfying
\[
 \sup_{t\ge0}\|u(t)\|_{H^1}<\infty.
\]
Then $\{u(t):t\ge1\}$ is relatively compact in $\mathcal X$. Its
$\omega$-limit set is nonempty, compact, and connected in $\mathcal X$, and
consists of stationary solutions of \eqref{eq:P}.
\end{lemma}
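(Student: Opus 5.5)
The plan has three steps: obtain a uniform positive-time bound in a space compactly embedded in $\mathcal X$, deduce the standard $\omega$-limit properties, and use the energy identity to show that limit points are stationary.

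\emph{Step 1: a uniform smoothing bound.} Let $B=\sup_{t\ge0}\|u(t)\|_{H^1}$. By \eqref{eq:Lq} with $q=4$ and Lemma~\ref{lem:lip}, $\|f(u(t))\|_2\le C(B)$ for every $t\ge0$. Fix $t\ge1$ and restart the mild formula \eqref{eq:mild} at time $t-1$. Write $u(t)=e^{\eps\Delta_N}u(t-1)+\int_0^1 e^{\eps r\Delta_N}f(u(t-r))\,dr$. The first term lies in $H^2_N$, with norm bounded by $C(\eps)B$, by analytic smoothing. For the Duhamel term I would use the fractional-power estimate $\|A_\eps^{\theta}e^{-rA_\eps}\|_{2\to2}\le C r^{-\theta}$ with some $\theta\in(1/2,1)$, say $\theta=3/4$. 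This gives
\[
 \sup_{t\ge1}\|u(t)\|_{\mathcal D(A_\eps^{\theta})}\le C(\eps,B).
\]
The domain $\mathcal D(A_\eps^\theta)$ sits in $H^{2\theta}(\Om)$, and for $2\theta>1$ this embeds compactly into $H^1(\Om)\cap C(\overline\Om)$ because $\Om\subset\R^2$. Hence $\{u(t):t\ge1\}$ is relatively compact in $\mathcal X$.

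\emph{Step 2: structure of the $\omega$-limit set.} Nonemptiness and compactness of $\omega(u)=\bigcap_{s\ge1}\overline{\{u(t):t\ge s\}}$ follow from Step~1 in the usual way. Connectedness follows from the standard argument for precompact continuous orbits, using that $t\mapsto u(t)$ is continuous into $\mathcal X$ for $t>0$; this continuity holds because $u\in C([\tau,T];H^2_N)$ by \eqref{eq:positive-strong}.

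\emph{Step 3: stationarity of limit points.} Since $E_\eps$ is continuous on $H^1$ and the orbit is bounded in $H^1$, $E_\eps(u(t))$ is bounded below. It is nonincreasing by Lemma~\ref{lem:energy}, so it converges to a limit $E_\infty$, and $\int_0^\infty\|u_t\|_2^2\,dt<\infty$. Take $z\in\omega(u)$ with $u(t_n)\to z$ in $\mathcal X$. For $\tau\in[0,1]$ I estimate
\[
 \|u(t_n+\tau)-u(t_n)\|_2\le\Bigl(\int_{t_n}^{t_n+1}\|u_t\|_2^2\Bigr)^{1/2}\to0,
\]
so $u(t_n+\tau)\to z$ in $L^2$ for every such $\tau$. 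By Step~1 this convergence upgrades to $\mathcal X$ along subsequences. On the other hand, continuous dependence in $H^1$ (Proposition~\ref{prop:lwp}, on the fixed time interval $[0,1]$ and an $H^1$ neighborhood of $z$) gives $u(t_n+\tau)\to\mathcal S_\eps(\tau)z$. Therefore $\mathcal S_\eps(\tau)z=z$ for all $\tau\in[0,1]$, so $z$ is an equilibrium. By \eqref{eq:positive-strong}, $z\in H^2_N$ and it solves $-\eps\Delta z=f(z)$ strongly; elliptic regularity then makes it classical.

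\emph{Main obstacle.} The delicate point is Step~1, where I need a bound in a space compactly embedded in $L^\infty$ that is uniform in $t\ge1$, and not only on compact time intervals. The key input is the $L^2$ bound on $f(u)$, which is uniform in time because of the $H^1$ bound and Moser--Trudinger via \eqref{eq:Lq}. The fractional-power smoothing estimate then applies on unit intervals, and its constants depend only on $\eps$ and not on $t$.
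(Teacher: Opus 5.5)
Your proof is correct. It follows the paper's overall plan: restart the mild formula at $t-1$, use the time-uniform bound on $\|f(u(t))\|_2$ from \eqref{eq:Lq}, then run a LaSalle-type energy argument. Both technical steps, however, are carried out differently.

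\emph{Compactness.} The paper avoids fractional powers. It splits the Duhamel integral at a small $\eta$, bounds $P_\eta(t)$ uniformly in $H^2$, and makes $Q_\eta(t)$ uniformly small in $\mathcal X$ via \eqref{eq:semigroup}. This gives total boundedness using only the smoothing bounds already established. Your estimate in $\mathcal D(A_\eps^{3/4})\subset H^{3/2}(\Om)$ instead yields a single bounded set that embeds compactly in $\mathcal X$. That is shorter, but it relies on the fractional-power calculus and on the interpolation inclusion $\mathcal D(A_\eps^\theta)\subset H^{2\theta}(\Om)$. The factor $e^{r}$ coming from $e^{\eps r\Delta_N}=e^{r}e^{-rA_\eps}$ is harmless for $r\le1$.

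\emph{Stationarity.} The paper passes the energy to the limit, obtaining $E_\eps(\mathcal S_\eps(\tau)z)=E_*=E_\eps(z)$. It then reads off $\int_0^\tau\|\partial_t\mathcal S_\eps(t)z\|_2^2\,dt=0$ from the energy identity along the orbit of $z$. You instead use $\int_0^\infty\|u_t\|_2^2\,dt<\infty$ to get $u(t_n+\tau)-u(t_n)\to0$ in $L^2$, and compare this with continuous dependence. This never applies the energy identity to the limiting orbit. Your upgrade to convergence in $\mathcal X$ is unnecessary: continuous dependence already gives $H^1$ convergence, so uniqueness of $L^2$ limits yields $\mathcal S_\eps(\tau)z=z$ directly.

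Two small points need tightening.

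First, invoking continuous dependence on $[0,1]$ presupposes $\Tmax(z)>1$, which you have not shown. Work with $\tau<\min\{1,\Tmax(z)\}$ and conclude $\mathcal S_\eps(\tau)z=z$ there. The $H^1$ norm is then constant along the orbit, so \eqref{eq:blowup-alt} rules out $\Tmax(z)\le1$, and uniqueness makes the constant orbit global. This is what the paper means by ``the continuation criterion makes this interval global.''

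Second, continuity of $E_\eps$ on $H^1$ alone does not give boundedness on bounded sets in infinite dimensions. Use $\int_\Om F(u)\,dx\le\int_\Om e^{u}\,dx$ together with \eqref{eq:Lq}, as the paper does, or use the relative compactness from your Step~1.
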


\begin{proof}
The uniform $H^1$ bound and \eqref{eq:Lq} with $q=2$ give
\[
 M_f=\sup_{t\ge0}\|f(u(t))\|_2<\infty.
\]
For $t\ge1$ and $\eta\in(0,1)$, restart the mild formula at $t-1$ and write
$u(t)=P_\eta(t)+Q_\eta(t)$,
where
\[
 \begin{aligned}
 P_\eta(t)&=e^{\eps\Delta_N}u(t-1)
 +\int_\eta^1e^{\eps r\Delta_N}f(u(t-r))\,dr,\\
 Q_\eta(t)&=\int_0^\eta e^{\eps r\Delta_N}f(u(t-r))\,dr.
 \end{aligned}
\]
For fixed $\eta>0$, analytic-semigroup smoothing bounds $P_\eta(t)$
uniformly in $H^2(\Om)$, whereas \eqref{eq:semigroup} gives
\[
 \sup_{t\ge1}\|Q_\eta(t)\|_{\mathcal X}
 \le C_\eps M_f(\eta+\sqrt\eta).
\]
Since in two dimensions $H^2(\Om)\hookrightarrow H^1(\Om),\, H^2(\Om)\hookrightarrow C(\overline\Om)$,
the forward orbit is relatively compact in $\mathcal X$.

The same $H^1$ bound and Lemma~\ref{lem:expint} bound the energy from below;
hence $E_\eps(u(t))\downarrow E_*$. The closures of the orbit tails are
nested nonempty compact connected subsets of $\mathcal X$, so their $\omega$-limit set $\omega(u_0)$ has the same properties. If $u(t_n)\to z$ in
$\mathcal X$, then for every $0<\tau<\Tmax(z)$ continuous dependence gives
$u(t_n+\tau)\to\mathcal S(\tau)z$ in $H^1$. Continuity and dissipation of
the energy therefore yield
\[
 E_\eps(\mathcal S(\tau)z)=E_\eps(z),
 \qquad
 \int_0^\tau\|\partial_t\mathcal S(t)z\|_2^2\,dt=0.
\]
Thus the orbit with initial datum $z$ is constant on its maximal interval. The
continuation criterion makes this interval global, and positive-time
regularity gives $-\eps\Delta z=f(z)$. Elliptic regularity then yields
$z\in C^{2,\alpha}(\overline\Om)$ for every $\alpha\in(0,1)$.
\end{proof}

\begin{proposition}
\label{prop:global-convergence}
Let $R>0$, $\eps>\eps_{\mathrm{dyn}}(R)$, and
$\|u_0\|_{H^1}\le R$. If the corresponding solution is global, then
\begin{equation}\label{eq:global-limit}
 u(t)\longrightarrow0
 \quad\text{or}\quad
 u(t)\longrightarrow\xi_a
 \qquad\text{in }\mathcal X.
\end{equation}
\end{proposition}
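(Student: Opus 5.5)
The plan is to combine the mean criterion, the energy barrier, the compactness lemma and the oscillation contraction, in that order.

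\emph{Step 1: the mean never crosses $\xi_a$.} Since the solution is global, Proposition~\ref{prop:mean-threshold}(1), applied after restarting at any time $t_0$, excludes $\ave{u(t_0)}>\xi_a$. Hence \eqref{eq:mean-below-all-time} holds.

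\emph{Step 2: uniform bounds and compactness.} Because $\eps>\eps_{\mathrm{en}}(R)$, Lemma~\ref{lem:energy-barrier} applies. It gives
\[
 \sup_{t\ge0}\|\nabla u(t)\|_2^2<R^2+1
 \qquad\text{and}\qquad
 \sup_{t\ge0}\|u(t)\|_{H^1}\le R_*.
\]
Lemma~\ref{lem:positive-compactness} then shows that $\omega(u_0)$ is a nonempty, compact, connected subset of $\mathcal X$ consisting of stationary solutions.

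\emph{Step 3: limit points are constant.} The hypotheses \eqref{eq:oscillation-bounds} hold on $[0,\infty)$ with $G=\sqrt{R^2+1}$ and $\overline m=\xi_a$. Since $\eps>\eps_{\mathrm{osc}}(\sqrt{R^2+1},\xi_a)$, Theorem~\ref{thm:oscillation-contraction} gives $\|u^\perp(t)\|_2\to0$ exponentially. Now let $z\in\omega(u_0)$ with $u(t_n)\to z$ in $\mathcal X$. Then $z^\perp=\lim u^\perp(t_n)=0$ in $L^2$, so $z$ is a constant $c$. A stationary constant satisfies $f(c)=0$, so Lemma~\ref{lem:scalar} forces $c\in\{0,\xi_a\}$.

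\emph{Step 4: the whole trajectory converges.} The set $\omega(u_0)$ is connected in $\mathcal X$ and contained in the two-point set $\{0,\xi_a\}$. It is therefore a single point, which we call $c$. Relative compactness of $\{u(t):t\ge1\}$ in $\mathcal X$, together with uniqueness of the limit point, gives $u(t)\to c$ in $\mathcal X$. This is \eqref{eq:global-limit}.

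\emph{Expected delicate point.} The step that needs care is Step 3, in the passage from $L^2$ decay of oscillations to the statement that the limit is a constant in $\mathcal X$. This works because convergence in $\mathcal X$ implies convergence in $L^2$, so the limit inherits zero oscillation. I would also check that Theorem~\ref{thm:oscillation-contraction} is applicable all the way down to $s=0$ with these bounds. It is, since Step 2 gives the gradient bound for all $t\ge0$ and Step 1 gives the mean bound for all $t\ge0$.

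Strictly speaking, Step 3 does not even need the decay to be exponential: any decay of $\|u^\perp(t)\|_2$ to zero suffices to identify the limit points.
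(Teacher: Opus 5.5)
Your proposal is correct and follows the paper's proof essentially step for step. You use the mean criterion after restarting, then the energy barrier of Lemma~\ref{lem:energy-barrier}, then oscillation contraction with $G=\sqrt{R^2+1}$ and $\overline m=\xi_a$, and finally compactness and connectedness of $\omega(u_0)\subset\{0,\xi_a\}$; the only difference is that you invoke Lemma~\ref{lem:positive-compactness} before Theorem~\ref{thm:oscillation-contraction}, which changes nothing.
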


\begin{proof}
A global solution cannot satisfy $\ave{u(t_0)}>\xi_a$ at any time, because
Proposition~\ref{prop:mean-threshold} applied after restarting at $t_0$ would
force finite-time blow-up. Hence \eqref{eq:mean-below-all-time} holds, and
Lemma~\ref{lem:energy-barrier} yields
\[
 \sup_{t\ge0}\|u(t)\|_{H^1}\le R_*,
 \qquad
 \sup_{t\ge0}\|\nabla u(t)\|_2<\sqrt{R^2+1}.
\]
Since $\eps>\eps_{\mathrm{osc}}\bigl(\sqrt{R^2+1},\xi_a\bigr)$,
Theorem~\ref{thm:oscillation-contraction} implies that
\begin{equation}\label{eq:global-oscillation-decay}
 \|u(t)-\ave{u(t)}\|_2\longrightarrow0\quad\text{exponentially as}
 \quad t\to\infty.
\end{equation}

Lemma~\ref{lem:positive-compactness} applies to the bounded global orbit.  If
$z\in\omega(u_0)$ and $u(t_n)\to z$ in $\mathcal X$, then
\eqref{eq:global-oscillation-decay} implies that $z$ is spatially constant.
The same lemma shows that $z$ is stationary, so $f(z)=0$. We obtain $\omega(u_0)\subset\{0,\xi_a\}$.
The $\omega$-limit set is nonempty and connected, hence it is either $\{0\}$
or $\{\xi_a\}$.  Relative compactness of the positive orbit then implies
convergence in $\mathcal X$, proving \eqref{eq:global-limit}.
\end{proof}

\subsection{The global trichotomy}\label{ssec:trichotomy}
Proposition~\ref{prop:global-convergence} provides the asymptotic classification of all global trajectories; it remains to obtain exponential decay to $0$, characterize finite-time blow-up by mean crossing, and prove openness of the two alternatives.

\begin{proof}[Proof of Theorem~\ref{thm:intro-phase}]
Every global solution is classified by
Proposition~\ref{prop:global-convergence}. If $u(t)\to0$ in $\mathcal X$,
choose $T>0$, $\mu>0$, and $M\in(0,\xi_a)$ so that
\[
 -\mu<u(x,T)<M
 \qquad(x\in\overline\Om).
\]
Lemma~\ref{lem:invariance} gives exponential decay in $L^\infty$ from time
$T$ onward, and the one-step Duhamel estimate used in
Theorem~\ref{thm:stab} gives exponential decay in $H^1$. Together with
Proposition~\ref{prop:global-convergence} and the finite-time alternative,
this gives the three alternatives in the theorem.

If $\ave{u(t_0)}>\xi_a$ for some $t_0<\Tmax$, then
Proposition~\ref{prop:mean-threshold}, restarted at $t_0$, gives finite-time
blow-up. Conversely, suppose that
$\ave{u(t)}\le\xi_a$ throughout the maximal existence interval. Then
Lemma~\ref{lem:energy-barrier} gives a uniform $H^1$ bound and hence, by
\eqref{eq:blowup-alt}, global existence. This proves
\eqref{eq:intro-mean-crossing}.

It remains to prove openness. Let $u_0$ belong to the blow-up set. By
\eqref{eq:intro-mean-crossing}, there is
$T<\Tmax(u_0)$ such that $\ave{u(T)}>\xi_a$. The time-$T$ map is continuous
in $H^1$ by Proposition~\ref{prop:lwp}, and the mean is continuous on
$L^2$. Thus the same strict mean crossing occurs for all initial data in a
sufficiently small $H^1$ neighborhood of $u_0$; those solutions also blow
up. Hence the blow-up set is relatively open.

Now let $u_0$ belong to the decay set. Choose $T>0$ and numbers
$\mu>0$, $M\in(0,\xi_a)$ such that
\[
 -\mu<u(x,T)<M
 \qquad(x\in\overline\Om).
\]
By Proposition~\ref{prop:lwp}, the strict pointwise
inequalities persist at time $T$ for all initial data in a sufficiently
small $H^1$ neighborhood of $u_0$. Lemma~\ref{lem:invariance} then gives
global exponential decay for every such perturbed solution. The decay set
is therefore relatively open as well. Finally, the bound \eqref{eq:intro-dyn-size} 
is precisely \eqref{eq:eps-dyn-size}.
\end{proof}

The next corollary shows that the threshold alternative is nontrivial. Spatially nonconstant trajectories converging to $\xi_a$ exist.

\begin{corollary}\label{cor:threshold-nontrivial}
Assume $R>\xi_a|\Om|^{1/2}$ and $\eps>\eps_{\mathrm{dyn}}(R)$.
Then the third alternative in Theorem~\ref{thm:intro-phase} is realized by
spatially nonconstant initial data. More precisely, let
$\phi\in C^\infty(\overline\Om)$ be nonconstant and satisfy
$\ave{\phi}=0,\, \|\phi\|_\infty\le1$.
Let $\eta>0$ be any small number. For $0\le s\le2\eta$, let
\[
  u_0^s=\xi_a-s+\eta\phi.
\] Then there exists a unique
$s_\eta\in(0,2\eta)$ such that the solution with initial datum
$ u_0^{s_\eta}=\xi_a-s_\eta+\eta\phi$ is global and satisfies
\[
 u(t)\longrightarrow\xi_a
 \qquad\text{in }\mathcal X
 \quad(t\to\infty).
\]
In addition, along the family $u_0^s$,
the solution blows up in finite time for $0\le s<s_\eta$, whereas it is
global and converges exponentially to $0$ in $\mathcal X$ for
$s_\eta<s\le2\eta$.
\end{corollary}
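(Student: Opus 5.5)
The plan is a monotone shooting argument along the ordered family $u_0^s$, with the classification supplied by Theorem~\ref{thm:intro-phase}. First I check that the whole family lies in the ball of radius $R$. Since $\ave{\phi}=0$, we have $\|u_0^s\|_2^2=(\xi_a-s)^2|\Om|+\eta^2\|\phi\|_2^2$ and $\|\nabla u_0^s\|_2=\eta\|\nabla\phi\|_2$. Because $R>\xi_a|\Om|^{1/2}$, these give $\|u_0^s\|_{H^1}\le R$ for all $s\in[0,2\eta]$ once $\eta$ is small, which is exactly the meaning of ``$\eta$ small''. So for $\eps>\eps_{\mathrm{dyn}}(R)$ each $u_0^s$ falls into exactly one of the three alternatives.

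Next I identify the endpoints. At $s=0$ the mean equals $\xi_a$ and the datum is nonconstant, so Proposition~\ref{prop:mean-threshold}(2) gives blow-up. At $s=2\eta$ we have $u_0^{2\eta}\le\xi_a-2\eta+\eta=\xi_a-\eta$ pointwise, and $u_0^{2\eta}\ge\xi_a-3\eta>0$. Lemma~\ref{lem:invariance} with $M=\xi_a-\eta$ and $\mu=0$ then gives global exponential decay to $0$.

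Then I use monotonicity. The data are strictly decreasing in $s$, and $u_0^{s}-u_0^{s'}=s'-s$ is a positive constant when $s<s'$. Lemma~\ref{lem:comparison} gives $u^s\ge u^{s'}$ pointwise. Hence the blow-up set $B$ and the decay set $D$ along the family are intervals: if $s'\in B$, then every $s<s'$ blows up. To see this, note that a global $u^s$ would have $\ave{u^s}\le\xi_a$ for all time by \eqref{eq:intro-mean-crossing}, hence so would $\ave{u^{s'}}$, and then $u^{s'}$ would be global, a contradiction. The analogous argument works for $D$ using the pointwise strip from Lemma~\ref{lem:invariance}. By the openness statement in Theorem~\ref{thm:intro-phase}, both $B$ and $D$ are relatively open in $[0,2\eta]$. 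The map $s\mapsto u_0^s$ is continuous into $H^1$, so by connectedness the complement $[0,2\eta]\setminus(B\cup D)$ is nonempty. Every point of the complement converges to $\xi_a$.

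The main obstacle is uniqueness: showing the threshold set is a single point $s_\eta$. Let $s_1<s_2$ both converge to $\xi_a$, and set $w=u^{s_1}-u^{s_2}\ge0$, so $\ave{w(0)}=s_2-s_1>0$. Integrating the equation for $w$ gives
\[
\frac{d}{dt}\ave{w}=\ave{f(u^{s_1})-f(u^{s_2})}.
\]
For large $t$ both solutions are uniformly near $\xi_a$, where $f'(\xi_a)=e^{\xi_a}-a>0$, since $\xi_a>\log a$. Hence the right-hand side is at least $c\ave{w}$ with $c>0$. So $\ave{w}$ grows exponentially once $t$ is large, provided $\ave{w}$ stays positive. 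Positivity follows from the strong maximum principle: $w\ge0$, $w\not\equiv0$. This contradicts $w\to0$, since both solutions tend to $\xi_a$. Therefore the threshold is a single $s_\eta$, and it lies in $(0,2\eta)$ because $0\in B$ and $2\eta\in D$.

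Finally, the ordering puts $[0,s_\eta)$ in $B$ and $(s_\eta,2\eta]$ in $D$. Exponential convergence in $\mathcal X$ on $D$ follows from the $L^\infty$ envelope \eqref{eq:envelope} together with the $H^1$ decay of Theorem~\ref{thm:intro-phase}.
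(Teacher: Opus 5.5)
Your proof is correct and follows the paper's argument. The shared steps are the ball check and endpoint identification, relative openness plus connectedness of $[0,2\eta]$ to produce a threshold parameter, and uniqueness via comparison and exponential growth of the mean of $u^{s_1}-u^{s_2}$ near $\xi_a$, where $f'(\xi_a)>0$. The only differences are minor: you derive the final classification from the monotone (interval) structure of the blow-up and decay sets rather than from connectedness of the two components of $[0,2\eta]\setminus\{s_\eta\}$, and you explicitly justify $\ave{w(t)}>0$, which the paper takes for granted (Gronwall with $f'\ge -a$ even gives $\ave{w(t)}\ge e^{-at}(s_2-s_1)$ directly).
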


\begin{proof}
Choose $\eta>0$ sufficiently small that
\[
 \eta<\frac{\xi_a}{3},
 \qquad
 |\Om|\xi_a^2+\eta^2\|\phi\|_{H^1}^2<R^2.
\]
Since $\ave{\phi}=0$, for $0\le s\le2\eta$,
$\|u_0^s\|_{H^1}^2
 =|\Om|(\xi_a-s)^2+\eta^2\|\phi\|_{H^1}^2<R^2$.
Thus the entire family lies in the $H^1$ ball on which
Theorem~\ref{thm:intro-phase} applies.

At $s=0$, the datum $u_0^0=\xi_a+\eta\phi$ is nonconstant and has mean
$\xi_a$. Hence Proposition~\ref{prop:mean-threshold} implies finite-time
blow-up. At the other endpoint,
\[
 0<\xi_a-3\eta
 \le u_0^{2\eta}
 \le\xi_a-\eta<\xi_a,
\]
so Lemma~\ref{lem:invariance} implies global existence and exponential
convergence to $0$.

Let $\mathcal B$ and $\mathcal D$ denote, respectively, the sets of
$s\in[0,2\eta]$ for which the corresponding solution blows up or converges
to $0$. By Theorem~\ref{thm:intro-phase}, both sets are relatively open.
They are disjoint and nonempty, since $0\in\mathcal B$ and
$2\eta\in\mathcal D$. Connectedness of $[0,2\eta]$ therefore yields at
least one parameter outside $\mathcal B\cup\mathcal D$, and the trichotomy
forces the corresponding solution to converge to $\xi_a$.

It remains to prove uniqueness. Suppose that $0\le s_1<s_2\le2\eta$
both generate solutions converging to $\xi_a$, and write
$u_i=u(\cdot,t;u_0^{s_i})$. Since
$u_0^{s_1}-u_0^{s_2}=s_2-s_1>0$, comparison gives $u_1\ge u_2$.
The difference $z=u_1-u_2$ satisfies
\[
 z_t-\eps\Delta z=c(x,t)z,
 \qquad
 c(x,t)=\int_0^1
 f'\bigl(u_2+\theta(u_1-u_2)\bigr)\,d\theta.
\]
Since $u_1,u_2\to\xi_a$ uniformly and
$f'(\xi_a)>0$, there is $T>0$ such that
$c(x,t)\ge f'(\xi_a)/2$ for $t\ge T$. Hence, with
$Z(t)=\int_\Om z(x,t)\,dx>0$,
\[
 Z'(t)\ge\frac{f'(\xi_a)}2 Z(t)
 \qquad(t\ge T),
\]
which is incompatible with $z(t)\to0$ uniformly. Thus the transition
parameter is unique. Since the complement of this single parameter has two
connected components containing $0$ and $2\eta$, respectively, the final
classification follows.
\end{proof}

\section{Sharp exponential scale for uniform stabilization}\label{sec:sharp}

We now optimize the upper stabilization bound at the $L^1$ endpoint and then
construct a matching obstruction by boundary concentration.  The two
arguments identify the same leading exponential scale.

\subsection{Endpoint upper bound}\label{ssec:endpoint}

A fixed $q>1$ would retain the factor $q$ in the Moser--Trudinger inequality and
therefore miss the sharp scale. Letting $q$ approach the endpoint $1$ at the rate
$R^{-2}$ recovers the coefficient $1/(8\pi)$. 

For $R\ge1$, set \[q_R=1+R^{-2}.\]

\begin{corollary}\label{cor:endpoint}
Fix $\delta\in(0,\xi_a)$. There exist
$C_\delta>0$ and $R_\delta'\ge1$, depending only on $a$, $\Om$, and
$\delta$, such that
\begin{equation}\label{eq:eps-stab-polynomial}
 \eps_{\mathrm{stab}}^{(q_R)}(R,\delta)
 \le C_\delta(1+R)^3
      \exp\left(\frac{R^2}{8\pi}\right)
 \qquad(R\ge R_\delta').
\end{equation}
In particular,
\begin{equation}\label{eq:eps-unif-upper}
 \eps_{\mathrm{unif}}(R,\delta)
 \le C_\delta(1+R)^3
      \exp\left(\frac{R^2}{8\pi}\right)
 \qquad(R\ge R_\delta').
\end{equation}
\end{corollary}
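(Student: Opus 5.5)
The plan is to bound each factor in \eqref{eq:eps-stab} separately with $q=q_R=1+R^{-2}$, and then invoke the inequality $\eps_{\mathrm{unif}}\le\inf_q\eps^{(q)}_{\mathrm{stab}}$ recorded after Theorem~\ref{thm:stab}. This last step gives \eqref{eq:eps-unif-upper} directly from \eqref{eq:eps-stab-polynomial}. Throughout, $R\ge R'_\delta\ge1$, and $C_\delta$ denotes constants depending only on $a$, $\Om$, and $\delta$.

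The first step is the factor $\mathcal F_{q_R,R}$. Since $D_R=R^2+1$ and $q_R=1+R^{-2}$,
\[
 \frac{q_RD_R}{8\pi}=\frac{(1+R^{-2})(R^2+1)}{8\pi}=\frac{R^2+2+R^{-2}}{8\pi}\le\frac{R^2}{8\pi}+\frac{3}{8\pi}.
\]
The exponential term in \eqref{eq:FqR} is therefore at most $C\exp(R^2/(8\pi))$. The prefactor $(C_{\mathrm{MT}}|\Om|)^{1/q}$ is bounded uniformly for $q\in(1,2]$. The remaining terms are $O(1+R)$, because $M_R\le CR$, $\sqrt{D_R}\le 2R$, and the powers $|\Om|^{1/q}$ and $|\Om|^{1/q-1/2}$ are bounded uniformly in $q\in(1,2]$. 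Altogether,
\[
 \mathcal F_{q_R,R}\le C(1+R)\exp\left(\frac{R^2}{8\pi}\right).
\]
The polynomial factor is harmless. For large $R$ one could even absorb it into the exponential, but I will simply carry it along.

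The second step is the time factor. By \eqref{eq:S-R-delta}, $S_{R,\delta}\le C_\delta(1+\log R)$. In $\mathcal H_q(S)=S+\frac{q}{q-1}S^{1-1/q}$, the coefficient $\frac{q_R}{q_R-1}=R^2+1$ is the price of the endpoint. Since $S^{1-1/q}\le \max\{1,S\}$, this gives $\mathcal H_{q_R}(S_{R,\delta})\le C_\delta(1+R)^2(1+\log R)$. This is the step I expect to cost the most: naively it produces $(1+R)^2\log R$.

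I would refine it using $S^{1-1/q_R}=S^{1/(R^2+1)}\le e^{\log S/(R^2+1)}\le C$, since $\log S_{R,\delta}=O(\log\log R)$. This removes the logarithm from the $\frac{q}{q-1}$ term. The additive term $S_{R,\delta}=O(\log R)$ is then dominated by $R^2$ for $R\ge R'_\delta$, giving
\[
 \mathcal H_{q_R}(S_{R,\delta})\le C_\delta(1+R)^2.
\]

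The third step collects the pieces in \eqref{eq:eps-stab}. The bootstrap gap is $d_R=\sqrt{R^2+1}-R=(\sqrt{R^2+1}+R)^{-1}$, so $d_R^{-1}\le 2R+1$, and hence $d_R^{-1}+4/\delta\le C_\delta(1+R)$. The first term of \eqref{eq:eps-stab} is then at most
\[
 C_\delta(1+R)^2\cdot(1+R)\exp\left(\frac{R^2}{8\pi}\right)\cdot(1+R)=C_\delta(1+R)^4\exp\left(\frac{R^2}{8\pi}\right),
\]
which is one power worse than claimed. The last term, $2S_{R,\delta}|\Om|^{-1/q}\mathcal F_{q_R,R}$, is much smaller.

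To recover the exponent $3$, I would sharpen $\mathcal F_{q_R,R}$ by separating the polynomial part from the exponential part. The $O(1+R)$ contribution comes only from the linear part $a\|u\|_q$ and has no exponential factor. The exponential part itself carries only a constant prefactor. Hence
\[
 \mathcal F_{q_R,R}\le C\exp\left(\frac{R^2}{8\pi}\right)+C(1+R)\le C\exp\left(\frac{R^2}{8\pi}\right)\qquad(R\ge R'_\delta).
\]
With this bound the product is $C_\delta(1+R)^2\cdot(1+R)\exp(R^2/(8\pi))=C_\delta(1+R)^3\exp(R^2/(8\pi))$, which is \eqref{eq:eps-stab-polynomial}. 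The additive $1$ in \eqref{eq:eps-stab} is absorbed as well.
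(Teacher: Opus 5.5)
Your argument is correct and follows the paper's proof essentially step for step. With $q_R=1+R^{-2}$ you use $S_{R,\delta}^{1/(R^2+1)}=O(1)$ to get $\mathcal H_{q_R}(S_{R,\delta})\le C_\delta(1+R)^2$, bound $d_R^{-1}\le 2R+1$, and get $\mathcal F_{q_R,R}\le C\exp(R^2/(8\pi))$ by absorbing the non-exponential linear part, then conclude through Theorem~\ref{thm:stab}; your intermediate $(1+R)^4$ bound was only a detour, since the final version already keeps the polynomial and exponential parts of $\mathcal F_{q_R,R}$ separate, as the paper does.
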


\begin{proof}
For fixed $\delta\in(0,\xi_a)$, recall the definition \eqref{eq:S-R-delta}
\[
 S_{R,\delta}\le C_\delta(1+\log R),
 \qquad
 \frac{q_R}{q_R-1}=R^2+1,
 \qquad
 1-\frac1{q_R}=\frac1{R^2+1}.
\]
As the factor $S_{R,\delta}^{1/(R^2+1)}$ is bounded for $R \ge 1$, we have
$\mathcal{H}_{q_R}(S_{R,\delta})=S_{R,\delta}+(R^2+1)S_{R,\delta}^{1/(R^2+1)}
 \le C_\delta R^2$.
Moreover, $q_RD_R=R^2+2+R^{-2},\,d_R^{-1}=\sqrt{R^2+1}+R\le1+2R$.
Formula \eqref{eq:FqR} then gives
\[
 \mathcal F_{q_R,R}
 \le C_{a,\Om}
      \exp\left(\frac{R^2}{8\pi}\right)
 \qquad(R\ge1).
\]
Substituting these estimates into \eqref{eq:eps-stab} proves
\eqref{eq:eps-stab-polynomial} for all sufficiently large $R$.
Theorem~\ref{thm:stab} and the definition \eqref{eq:eps-unif} then imply
\eqref{eq:eps-unif-upper}.
\end{proof}

The lower bound is produced by data concentrated at a boundary point. The
same half-space geometry responsible for the sharp mean-zero
Moser--Trudinger coefficient reappears in the construction.

\subsection{Boundary localization and a Kaplan blow-up criterion}
\label{ssec:boundary-cutoff}
We begin with a boundary-localized version of Kaplan's blow-up argument \cite{Kaplan}.
Our goal is to construct a test function supported in a boundary ball.
Fix $x_0\in\partial\Om$ and choose a Fermi coordinate chart
\[
 \Psi:\overline{B_{2r_0}^+}\longrightarrow\overline\Om ,
\]
where $B_r^+=\{y\in\R^2:|y|<r,\ y_2>0\}$, which is a diffeomorphism onto its image, satisfies $\Psi(0)=x_0$, and maps
$\{y_2=0\}$ into $\partial\Om$. We
use arclength along the boundary and inward normal distance, so the
coordinate vectors are orthonormal at $y=0$.  Write
\[
 \mathfrak j(y)=|\det D\Psi(y)|,
 \qquad
 \mathsf A(y)=\mathfrak j(y)D\Psi(y)^{-1}D\Psi(y)^{-\mathsf{T}}.
\]
After reducing $r_0$ if necessary,
\begin{equation}\label{eq:chart-metric}
 \mathsf A(y)=I+O(|y|),
 \qquad
 \mathfrak j(y)=1+O(|y|),
 \qquad
 \mathsf A_{12}(y)=0.
\end{equation}

\begin{lemma}\label{lem:boundary-cutoff}
Let $\zeta(z)=(1-|z|^2)^2$ for $z\in B_1^+$.
For $0<\rho\le r_0$, set $U_\rho=\Psi(B_\rho^+)$ and define
\[
 \varphi_\rho(x)=
 \begin{cases}
  \mathcal N_\rho^{-1}
  \zeta\!\left(\Psi^{-1}(x)/\rho\right),&x\in U_\rho,\\
  0,&x\notin U_\rho,
 \end{cases}
\]
where
$\mathcal N_\rho
=\int_{U_\rho}\zeta(\Psi^{-1}(x)/\rho)\,dx$.
Then $\varphi_\rho\in W^{2,\infty}(\Om)$,
$\varphi_\rho\ge0$, and $\int_\Om\varphi_\rho\,dx=1$.
Moreover, $\partial_\nu\varphi_\rho=0$ on $\partial\Om$ in the trace sense,
and there exists $\kappa>0$, independent of $\rho$, such that
\begin{equation}\label{eq:weight-laplacian}
 \Delta\varphi_\rho
 \ge-\kappa\rho^{-2}\varphi_\rho
 \qquad\text{a.e.\ and distributionally in }\Om.
\end{equation}
\end{lemma}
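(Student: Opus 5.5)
The plan is to pull everything back to the half-ball via the chart $\Psi$ and work with $w(y)=\zeta(y/\rho)$ on $B_\rho^+$. Regularity and positivity come first. $\zeta$ is smooth on $\overline{B_1}$, and on $|z|=1$ both $\zeta$ and $\nabla\zeta=-4z(1-|z|^2)$ vanish. Hence the zero extension of $\zeta(\Psi^{-1}(x)/\rho)$ across the curved part $\Psi(\{|y|=\rho,\ y_2>0\})$ is $C^1$, with piecewise smooth second derivatives. This gives $W^{2,\infty}(\Om)$. The normalization $\int\varphi_\rho=1$ is built into $\mathcal N_\rho$, and the change of variables gives $\mathcal N_\rho=\rho^2\int_{B_1^+}\zeta(z)\mathfrak j(\rho z)\,dz\asymp\rho^2$, which is strictly positive. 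Nonnegativity is clear.

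For the Neumann condition, the flat part $\{y_2=0\}$ of $\partial B_\rho^+$ is mapped into $\partial\Om$. The chart uses arclength and inward normal distance, so the inward unit normal there is $D\Psi\,e_2$, with $D\Psi$ orthogonal on $\{y_2=0\}$. Then $\partial_\nu\varphi_\rho=-\mathcal N_\rho^{-1}\rho^{-1}\partial_{z_2}\zeta$, and $\partial_{z_2}\zeta=-4z_2(1-|z|^2)=0$ at $z_2=0$. On the rest of $\partial\Om$ the function vanishes identically near the boundary, apart from the corner points, where $C^1$ regularity handles the trace. Equivalently one can use the conormal form: in coordinates the flux is $(\mathsf A\nabla_y w)\cdot e_2=\mathsf A_{22}\partial_{y_2}w$ because $\mathsf A_{12}=0$, and this vanishes on $y_2=0$.

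For \eqref{eq:weight-laplacian}, I would use the chart form of the Laplacian, $\Delta\varphi=\mathfrak j^{-1}\operatorname{div}_y(\mathsf A\nabla_y w)$. Outside $\overline{U_\rho}$ the function is $0$, so the inequality is trivial there. Since $\varphi_\rho\in W^{2,\infty}$ has no singular part on the interface, the a.e.\ inequality gives the distributional one, provided the test functions are compactly supported in $\Om$. Inside, with $z=y/\rho$, we have
$\rho^2\mathcal N_\rho\,\Delta\varphi_\rho=\mathfrak j^{-1}\bigl(\mathsf A:\nabla_z^2\zeta+\rho\,(\operatorname{div}\mathsf A)\cdot\nabla_z\zeta\bigr)$.
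The claim therefore reduces to a $\rho$-uniform pointwise inequality
$\mathsf A:\nabla^2\zeta+\rho(\operatorname{div}\mathsf A)\cdot\nabla\zeta\ge-\kappa'\zeta$ on $B_1^+$.

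This is the main obstacle, because $\zeta\to0$ at $|z|=1$ and the right-hand side degenerates there. I would handle it by splitting into two regions. On $|z|\le1-\theta$ for a fixed $\theta>0$, the left side is bounded in absolute value, uniformly in $\rho\le r_0$, since $\mathsf A$ and $\nabla\mathsf A$ are bounded. Meanwhile $\zeta\ge c_\theta>0$, so a large $\kappa'$ suffices. Near $|z|=1$ we compute
$\nabla^2\zeta=-4(1-|z|^2)I+8z\otimes z$.
This gives $\mathsf A:\nabla^2\zeta=-4(1-|z|^2)\operatorname{tr}\mathsf A+8\,z\cdot\mathsf A z$ and $|\nabla\zeta|\le 4(1-|z|^2)$. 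The term $8z\cdot\mathsf Az\ge 8\lambda_{\min}|z|^2$ is positive and bounded below by $\approx8(1-\theta)^2(1-O(r_0))>0$ in the annulus. The remaining terms are $O(1-|z|^2)$, which is small when $\theta$ is small. Choosing $\theta$ small makes the left side positive in the annulus $1-\theta<|z|<1$, which is better than needed. This is the reason for squaring $(1-|z|^2)$.

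Combining the two regions gives $\Delta\varphi_\rho\ge-\kappa'\rho^{-2}\mathfrak j^{-1}\mathcal N_\rho^{-1}\zeta\ge-\kappa\rho^{-2}\varphi_\rho$, where $\mathfrak j^{-1}$ is bounded by \eqref{eq:chart-metric}.
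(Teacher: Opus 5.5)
Your proposal is correct and follows essentially the same route as the paper: you pull back through the Fermi chart, get the $C^{1,1}$ zero extension from the vanishing of $\zeta$ and $\nabla\zeta$ on $|z|=1$, check $\partial_{z_2}\zeta=0$ on the flat edge for the Neumann condition, and split $B_1^+$ into an inner region where $\zeta$ is bounded below and an outer annulus where the rescaled operator applied to $\zeta$ is positive. The only difference is the mechanism in the annulus: the paper uses $\Delta\zeta=-8+16|z|^2\ge c_\sigma>0$ on a fixed annulus $\sigma\le|z|\le1$ and shrinks $r_0$ so that the rescaled operator is an $O(\rho)$ perturbation of $\Delta$, whereas your splitting $\nabla^2\zeta=-4(1-|z|^2)I+8z\otimes z$ on a thin annulus needs only uniform ellipticity of $\mathsf A$ and bounded $\nabla\mathsf A$, so no further reduction of $r_0$ is required.
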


\begin{proof}
On the artificial semicircle $\{|z|=1,z_2\ge0\}$, both $\zeta$ and
$\nabla\zeta$ vanish. Thus the zero extension across this interface is
locally the function
$(1-|z|^2)_+^2$.
The function $(1-|z|^2)_+^2$ is globally $C^{1,1}$ on $\R^2$ 
including at the two points where the artificial
semicircle meets $\partial\Om$. Hence its restriction to the chart
belongs to $W^{2,\infty}$, and no interface measure appears in its second
distributional derivatives.

On the flat part of the boundary,
$\partial_{z_2}\zeta(z_1,0)=0$.
Since Fermi coordinates are orthogonal and the $z_2$ direction is normal to
the physical boundary, this implies
$\partial_\nu\varphi_\rho=0$ in the trace sense.

The change of variables $x=\Psi(\rho z)$ gives
\[
 \mathcal N_\rho
 =\rho^2\int_{B_1^+}\zeta(z)\mathfrak j(\rho z)\,dz.
\]
After reducing $r_0$, the Jacobian is bounded above and below by positive
constants. Thus
\[ 
 c_\Psi\rho^2
 \le\mathcal N_\rho
 \le C_\Psi\rho^2
 \qquad(0<\rho\le r_0).
\]

For a smooth function $w$ in the chart, the Laplacian has divergence
form
\[
 (\Delta_x(w\circ\Psi^{-1}))\circ\Psi
 =\frac1{\mathfrak j}
   \partial_{y_i}\!\left(\mathsf A_{ij}\partial_{y_j}w\right).
\]
With $y=\rho z=\Psi^{-1}(x)$, this yields
\begin{equation}\label{eq:scaled-laplacian}
 \rho^2\Delta_x
 \left[\zeta\!\left(\frac{\Psi^{-1}(x)}\rho\right)\right]
 =\mathcal L_\rho\zeta(z),
\end{equation}
where
\[
 \mathcal L_\rho
 =a_\rho^{ij}(z)\partial_{ij}
  +\rho b_\rho^i(z)\partial_i,
\]
and, uniformly for $0<\rho\le r_0$,
\[
 \|a_\rho-I\|_{L^\infty(B_1^+)}\le C_\Psi\rho,
 \qquad
 \|b_\rho\|_{L^\infty(B_1^+)}\le C_\Psi.
\]
Indeed, the fixed Fermi chart is smooth, so $\mathfrak j$, $\mathsf A$,
and their first derivatives are uniformly bounded on the chart. The first
estimate follows from \eqref{eq:chart-metric}, while these derivative bounds
control the first-order coefficients after scaling and give the second.

Since
$\Delta\zeta=-8+16|z|^2$,
choose $\sigma\in(2^{-1/2},1)$ and set
$c_\sigma=16\sigma^2-8>0$. On
$\{\sigma\le|z|\le1,z_2\ge0\}$,
$\Delta\zeta\ge c_\sigma$. Moreover,
\[
 |\mathcal L_\rho\zeta-\Delta\zeta|
 \le C_\Psi\rho
 \bigl(\|D^2\zeta\|_\infty+\|\nabla\zeta\|_\infty\bigr).
\]
Reducing $r_0$ once more, we may arrange that the right-hand side is at most
$c_\sigma/2$. Hence
$\mathcal L_\rho\zeta\ge c_\sigma/2$ on this annulus. On
$\{|z|\le\sigma,z_2\ge0\}$, the function $\mathcal L_\rho\zeta$ is bounded
below uniformly in $\rho$, whereas
$\zeta\ge(1-\sigma^2)^2$. It follows that
\[
 \mathcal L_\rho\zeta\ge-\kappa\zeta
 \qquad\text{on }B_1^+.
\]
Scaling in \eqref{eq:scaled-laplacian} and normalization by
$\mathcal N_\rho$ prove \eqref{eq:weight-laplacian} almost everywhere. Since
$\varphi_\rho\in W^{2,\infty}(\Om)$, the same inequality holds
distributionally.
\end{proof}

\begin{lemma}\label{lem:localized-Kaplan}
Let $0<\rho\le r_0$ and $h\ge1$ satisfy
$1+ah\le e^h/8$.
Suppose that $u_0\in H^1(\Om)$ satisfies
$u_0\ge0$ and $u_0\ge h$ a.e.\ on $U_\rho$.
If
\[
 0<\eps
 \le
 \frac{\rho^2}{8\kappa}
 \frac{e^h}{h},
\]
then the corresponding solution blows up in finite time. More precisely,
\[
 \Tmax\le2e^{-h}.
\]
\end{lemma}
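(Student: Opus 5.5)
The plan is Kaplan's eigenfunction argument, with the weight $\varphi_\rho$ of Lemma~\ref{lem:boundary-cutoff} playing the role of the eigenfunction. Set
\[
 J(t)=\int_\Om u(x,t)\varphi_\rho(x)\,dx ,\qquad 0\le t<\Tmax .
\]
Continuity of $u$ in $H^1$ makes $J$ continuous on $[0,\Tmax)$. Since $\varphi_\rho\ge0$, $\int\varphi_\rho=1$, $\operatorname{supp}\varphi_\rho\subset\overline{U_\rho}$ and $u_0\ge h$ there, we get $J(0)\ge h$.

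\textbf{Step 1: nonnegativity.} Because $u_0\ge0$ and $0$ is a constant solution, Lemma~\ref{lem:comparison} gives $u\ge0$ on $[0,\Tmax)$.

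\textbf{Step 2: differential inequality for $J$.} For $t>0$, Proposition~\ref{prop:lwp} gives $u\in C([\tau,T];H^2_N)\cap C^1([\tau,T];L^2)$. So $J$ is $C^1$ on $(0,\Tmax)$ and
\[
 J'=\eps\int_\Om \Delta u\,\varphi_\rho\,dx+\int_\Om f(u)\varphi_\rho\,dx .
\]
Both $u$ and $\varphi_\rho\in W^{2,\infty}$ have zero normal derivative. Green's identity therefore moves the Laplacian onto $\varphi_\rho$ with no boundary terms, so $\int\Delta u\,\varphi_\rho=\int u\,\Delta\varphi_\rho$. Using $u\ge0$ and \eqref{eq:weight-laplacian}, this is at least $-\kappa\rho^{-2}J$. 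The measure $\varphi_\rho\,dx$ is a probability measure and $f$ is convex, so Jensen's inequality gives $\int f(u)\varphi_\rho\ge f(J)$. Hence
\[
 J'\ge f(J)-\eps\kappa\rho^{-2}J\qquad(0<t<\Tmax).
\]
I expect this step to be the main technical point. One must check that the integration by parts is legitimate for a $W^{2,\infty}$ weight whose support meets $\partial\Om$ at the two corner points. This is exactly where the trace statement $\partial_\nu\varphi_\rho=0$ and the absence of interface measures from Lemma~\ref{lem:boundary-cutoff} are used.

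\textbf{Step 3: scalar lower bound.} For $s\ge h$ I claim
\[
 f(s)-\eps\kappa\rho^{-2}s\ge \tfrac34 e^s .
\]
First, $g(s)=e^s/8-1-as$ satisfies $g(h)\ge0$ by hypothesis. Moreover $g'(s)=e^s/8-a\ge0$ for $s\ge h$, since $e^h/8\ge1+ah>a$. Hence $1+as\le e^s/8$ on $[h,\infty)$. Second, $s\mapsto e^s/s$ is increasing on $[1,\infty)$, so the hypothesis on $\eps$ gives
\[
 \eps\kappa\rho^{-2}s\le \frac{e^h}{8h}\,s\le \frac{e^s}{8}\qquad(s\ge h).
\]
Subtracting both bounds from $e^s$ proves the claim.

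\textbf{Step 4: conclusion.} On any interval where $J\ge h$ we have $J'>0$. Since $J(0)\ge h$ and $J$ is continuous, a first-exit argument shows $J\ge h$ on all of $[0,\Tmax)$. Then $J'\ge\tfrac12e^J$, which gives
\[
 \frac{d}{dt}e^{-J}\le-\tfrac12 ,\qquad\text{so}\qquad e^{-J(t)}\le e^{-h}-\tfrac t2 .
\]
Finally, $J(t)\le\|u(t)\|_\infty$, which is finite for every $t<\Tmax$ by positive-time regularity. The bound on $e^{-J}$ therefore forces $\Tmax\le 2e^{-h}$.
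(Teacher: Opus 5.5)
Your proposal is correct and follows essentially the same route as the paper. The shared steps are the weighted mean $\int_\Om u\varphi_\rho\,dx$, nonnegativity by comparison with $0$, integration by parts against $\varphi_\rho$ combined with \eqref{eq:weight-laplacian}, Jensen for the convex reaction, and a lower bound of the form $e^s-1-(a+\Gamma)s\ge e^s/2$ for $s\ge h$. The only difference is cosmetic: you integrate the differential inequality for $J$ directly after a first-exit argument keeping $J\ge h$, whereas the paper compares with the scalar solution $\mathcal Y'=e^{\mathcal Y}-1-K\mathcal Y$, $\mathcal Y(0)=h$. In either version, the behaviour at $t=0$, where $J$ is only continuous, is handled by integrating over $[\tau,t]$ and letting $\tau\downarrow0$, as the paper does explicitly.
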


\begin{proof}
Let $\varphi_\rho$ be the cutoff from
Lemma~\ref{lem:boundary-cutoff} and set
\[
 \mathcal J(t)
 =\int_\Om u(x,t)\varphi_\rho(x)\,dx.
\]
Since $\varphi_\rho\,dx$ is a probability measure supported in $U_\rho$,
$\mathcal J(0)\ge h$.
Moreover, $u_0\ge0$ and comparison with the zero solution give
$u\ge0$ throughout the existence interval.

For $t>0$,  we deduce from the regularity  of $u$ and $\varphi_\rho$ that
\[\int_\Om(\Delta u)\varphi_\rho\,dx
 =-\int_\Om\nabla u\cdot\nabla\varphi_\rho\,dx
 =\int_\Om u\,\Delta\varphi_\rho\,dx.\]

Using $u\ge0$, \eqref{eq:weight-laplacian}, and Jensen's inequality
with respect to the probability measure $\varphi_\rho\,dx$, we obtain
\[
  \eps\int_\Om(\Delta u)\varphi_\rho\,dx
  \ge -\eps\kappa\rho^{-2}\mathcal J,\quad
  \int_\Om e^u\varphi_\rho\,dx
  \ge e^{\mathcal J}.
\]
Thus, with $\Gamma=\eps\kappa\rho^{-2}$,
\[
 \begin{aligned}
  \mathcal J'
  &=\eps\int_\Om(\Delta u)\varphi_\rho\,dx
    +\int_\Om e^u\varphi_\rho\,dx-1-a\mathcal J\\
  &\ge e^{\mathcal J}-1-(a+\Gamma)\mathcal J.
 \end{aligned}
\]
The function $\mathcal J$ is continuous on $[0,\Tmax)$ and $C^1$ on
$(0,\Tmax)$. Integrating the preceding inequality over $[\tau,t]$ and
letting $\tau\downarrow0$ shows that the corresponding integral
inequality is valid from time zero.

By the assumption on $\eps$, $\Gamma h\le e^h/8$.
Set $K=a+\Gamma$; then $1+Kh\le e^h/4$.
The function $g(s)=(1+Ks)e^{-s}$ is decreasing for $s\ge1$, since
$g'(s)=e^{-s}(K-1-Ks)<0$. Therefore
$e^s-1-Ks\ge e^s/2$ for $s\ge h$.

Let $\mathcal{Y}$ be the maximal solution of
$\mathcal{Y}'=e^{\mathcal{Y}}-1-K\mathcal{Y}$, $\mathcal{Y}(0)=h$.
The scalar comparison principle gives $\mathcal J(t)\ge \mathcal{Y}(t)$
while both functions are finite. 
The preceding estimate gives $\mathcal Y'\ge e^{\mathcal Y}/2$ and hence
\[
 \frac{d}{dt}e^{-\mathcal Y(t)}\le-\frac12 ,
 \quad\text{so}\quad
 e^{-\mathcal Y(t)}\le e^{-h}-\frac t2 .
\]
Therefore the maximal existence time of $\mathcal Y$ is at most $2e^{-h}$.
If the PDE existed beyond that time, $\mathcal J$ would remain finite
while $\mathcal{Y}(t)\to\infty$, a contradiction. 
Hence $\Tmax\le2e^{-h}$ as claimed.
\end{proof}

\subsection{Boundary Moser profiles and lower bounds}
\label{ssec:boundary-moser}

\begin{lemma}\label{lem:boundary-moser-profile}
Fix $r\in(0,r_0]$. For $L\ge1$, set $\rho_L=re^{-L}$
and define $\chi_L:\overline{B_{r}^+}\to[0,1]$ by
\[
 \chi_L(y)=
 \begin{cases}
  1,&|y|\le\rho_L,\\
  L^{-1}\log(r/|y|),&\rho_L<|y|\le r.
 \end{cases}
\]
Let $\psi_L(\Psi(y))=\chi_L(y)$ on $\Psi(B_{r}^+)$ and extend it
by zero outside this set.  Then
$\psi_L\in H^1(\Om)\cap L^\infty(\Om)$, $0\le\psi_L\le1$, and
$\psi_L=1$ on $\Psi(B_{\rho_L}^+)$. In addition,
\begin{equation}\label{eq:boundary-Moser-estimates}
 \|\nabla\psi_L\|_2^2=\frac\pi L+O(L^{-2}),
 \qquad
 \|\psi_L\|_2^2=O(L^{-2}),
 \qquad
 \int_\Om\psi_L\,dx\le\frac{C_\Psi^*r^2}{L},
\end{equation}
where $C_\Psi^*>0$ depends only on the fixed chart and the estimates are
uniform for $0<r\le r_0$.
\end{lemma}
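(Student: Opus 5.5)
The plan is to do everything in the fixed Fermi chart, pulling integrals back via $x=\Psi(y)$, and compare with the flat half-disk computation. Regularity comes first. $\chi_L$ is Lipschitz on $\overline{B_r^+}$: it is constant on $B_{\rho_L}^+$, logarithmic on the annulus, continuous across $|y|=\rho_L$, and vanishes on $|y|=r$. Since $\chi_L=0$ on the curved arc $\{|y|=r, y_2\ge0\}$, the zero extension of $\psi_L$ across $\Psi(\{|y|=r\})$ is Lipschitz on $\overline\Om$. On the flat part $\{y_2=0\}$ nothing needs to be matched, because that part maps into $\partial\Om$. Hence $\psi_L\in W^{1,\infty}(\Om)\subset H^1(\Om)\cap L^\infty(\Om)$. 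The bounds $0\le\psi_L\le1$ and $\psi_L=1$ on $\Psi(B_{\rho_L}^+)$ are immediate.

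For the Dirichlet energy, I will use the chart identity $\int_{\Psi(B_r^+)}|\nabla_x w|^2\,dx=\int_{B_r^+}\nabla_y\chi\cdot\mathsf A\nabla_y\chi\,dy$, with $\mathsf A=\mathfrak j D\Psi^{-1}D\Psi^{-\mathsf T}$ as defined before Lemma~\ref{lem:boundary-cutoff}. On the annulus, $|\nabla\chi_L|=1/(L|y|)$, and by \eqref{eq:chart-metric} $\mathsf A=I+O(|y|)$, so
\[
\|\nabla\psi_L\|_2^2=\int_{\rho_L<|y|<r,\,y_2>0}\frac{1+O(|y|)}{L^2|y|^2}\,dy .
\]
In polar coordinates over the half-annulus (angle $\pi$), the main term is $\pi L^{-2}\log(r/\rho_L)=\pi/L$. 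The error term is at most $CL^{-2}\int_{\rho_L}^r dt\le Cr_0L^{-2}$. This gives $\pi/L+O(L^{-2})$ uniformly in $r\le r_0$. The half-disk is exactly where the constant $\pi$ (rather than $2\pi$) comes from.

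For the $L^2$ norm and the integral, I use the bound $\mathfrak j\le C$. This gives $\|\psi_L\|_2^2\le C\int_{B_r^+}\chi_L^2\,dy$, which splits into two parts:
\begin{itemize}
\item The inner disk contributes $C\rho_L^2=Cr^2e^{-2L}$.
\item The annulus contributes $CL^{-2}\int_{\rho_L}^r t\log^2(r/t)\,dt\le CL^{-2}r^2\int_0^1 s\log^2(1/s)\,ds=O(r^2L^{-2})$.
\end{itemize}
Together this is $O(L^{-2})$ uniformly. Similarly, $\int\psi_L\le C\bigl(\rho_L^2+L^{-1}\int_{\rho_L}^r t\log(r/t)\,dt\bigr)\le C_\Psi^* r^2/L$, using $e^{-2L}\le 1/L$ and $\int_0^1 s\log(1/s)\,ds=1/4$.

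The only delicate point is the energy expansion. It needs the $O(|y|)$ metric correction to be integrable against $|y|^{-2}$ in two dimensions, and it is: the error is $\int dt/L^2$, which is harmless. It also needs the half-disk angle to be exactly $\pi$, which uses that the Fermi chart maps $\{y_2=0\}$ onto the boundary, so $\Psi(B_r^+)$ is a genuine half-neighbourhood with $\mathsf A(0)=I$. Everything else is a routine change of variables.
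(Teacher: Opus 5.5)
Your proposal is correct and follows essentially the same route as the paper. The shared steps are: zero extension across the artificial arc $|y|=r$ to get $H^1$ membership; the flat half-disc computation giving $\pi/L$, with the $O(|y|)$ metric error contributing $O(L^{-2})$; and uniform Jacobian bounds plus an explicit radial integral for the $L^2$ and $L^1$ estimates. The only difference is cosmetic: you substitute $t=rs$ where the paper substitutes $t=\log(r/s)$, and both lead to the same finite integrals.
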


\begin{proof}
Since $\chi_L$ has zero trace on the artificial boundary $|y|=r$, its
zero extension belongs to $H^1(\Om)$.  In the flat half-disc,
\[
 \int_{B_{r}^+}|\nabla\chi_L|^2\,dy
 =\frac\pi{L^2}\int_{\rho_L}^{r}\frac{ds}{s}
 =\frac\pi L.
\]
The coefficient is $\pi$, rather than the interior value $2\pi$, because
only a half-disc is involved.  From \eqref{eq:chart-metric}, the metric error
is
\[
 O\left(\int_{B_r^+}|y||\nabla\chi_L|^2\,dy\right)=O(L^{-2}).
\]
For the remaining estimates, the change of variables
$t=\log(r/s)$ and the uniform bounds on the Jacobian give
\[
 \begin{aligned}
  \|\psi_L\|_2^2
  &\le C_\Psi r^2\left(
       e^{-2L}
       +\frac1{L^2}\int_0^L t^2e^{-2t}\,dt
      \right)
   \le \frac{C_\Psi r^2}{L^2}
   =O(L^{-2}),\\
  \int_\Om\psi_L\,dx
  &\le C_\Psi r^2\left(
       e^{-2L}
       +\frac1L\int_0^L te^{-2t}\,dt
      \right)
   \le \frac{C_\Psi^*r^2}{L}.
 \end{aligned}
\]
This proves the remaining estimates in
\eqref{eq:boundary-Moser-estimates}.
\end{proof}

As a first consequence, the construction already shows
that the uniform stabilization threshold is strictly positive at every
radius.

\begin{corollary}\label{cor:eps-unif-positive}
For every $R>0$ and $\delta\in(0,\xi_a)$,
$\eps_{\mathrm{unif}}(R,\delta)>0$.
\end{corollary}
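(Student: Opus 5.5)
We need to show that for fixed $R>0$ and $\delta\in(0,\xi_a)$ there is some $\eps_0>0$ and some admissible datum in $\mathcal A(R,\delta)$ whose solution at diffusivity $\eps_0$ fails to converge to $0$. Indeed, if every $\eps>0$ stabilized $\mathcal A(R,\delta)$, the infimum in \eqref{eq:eps-unif} would be $0$. So it suffices to exhibit, for small $\eps$, one datum in $\mathcal A(R,\delta)$ that blows up. We will produce it from Lemma~\ref{lem:localized-Kaplan} using the profiles of Lemma~\ref{lem:boundary-moser-profile}.

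The datum will be $u_0=h\psi_L$ for a fixed $L\ge1$, a radius $r\le r_0$, and a height $h$. It is nonnegative and equals $h$ on $\Psi(B_{\rho_L}^+)=U_{\rho_L}$, with $\rho_L=re^{-L}$. By \eqref{eq:boundary-Moser-estimates},
\[
\|u_0\|_{H^1}^2=h^2\bigl(\|\nabla\psi_L\|_2^2+\|\psi_L\|_2^2\bigr)\le h^2\Bigl(\frac{\pi}{L}+\frac{C}{L^2}\Bigr),
\qquad
\ave{u_0}\le\frac{hC_\Psi^*r^2}{|\Om|L}.
\]
Fix $h\ge1$ large enough that $1+ah\le e^h/8$. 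Then choose $L$ large so that $h^2(\pi/L+C/L^2)\le R^2$. Finally, shrink $r$ (or rely on $L$ large) so that the mean is at most $\xi_a-\delta$. Since $\xi_a-\delta>0$ and the mean bound is $O(h/L)$, this holds for $L$ large. With these choices $u_0\in\mathcal A(R,\delta)$.

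Now apply Lemma~\ref{lem:localized-Kaplan} with $\rho=\rho_L\le r_0$. For every
\[
0<\eps\le\eps_0:=\frac{\rho_L^2}{8\kappa}\frac{e^h}{h},
\]
the solution blows up by time $2e^{-h}$, so $\eps$ does not stabilize $\mathcal A(R,\delta)$. Hence no $\eps_0'<\eps_0$ has the property that every $\eps>\eps_0'$ stabilizes, which gives $\eps_{\mathrm{unif}}(R,\delta)\ge\eps_0>0$.

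The one point needing care is that the constants in \eqref{eq:boundary-Moser-estimates} must be uniform in $L$ so that a fixed large $h$ can be paired with a large $L$. The lemma asserts exactly this uniformity. No further obstacle is expected, because here $R$ is fixed and we only need positivity, not the sharp rate.
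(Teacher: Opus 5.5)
Your proposal is correct and follows essentially the same route as the paper: fix $h\ge1$ with $1+ah\le e^h/8$, take $u_0=h\psi_L$ with $L$ large so that $u_0\in\mathcal A(R,\delta)$, and apply Lemma~\ref{lem:localized-Kaplan} at $\rho=re^{-L}$ to get blow-up for every $0<\eps\le r^2e^{h-2L}/(8\kappa h)$. The only cosmetic difference is that the paper secures the mean bound by shrinking $r$ in advance. You instead observe that, for fixed $h$ and $r$, the mean bound is $O(h/L)$, so taking $L$ large already suffices; this works equally well.
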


\begin{proof}
Fix $R>0$ and $\delta\in(0,\xi_a)$. Choose $h\ge1$ so large that $1+ah\le e^h/8$, and then choose
$r\in(0,r_0]$ so small that
$hC_\Psi^*r^2/|\Om|<\xi_a-\delta$.
By Lemma~\ref{lem:boundary-moser-profile},
$\|\psi_L\|_{H^1}\to0$ as $L\to\infty$.
Hence, for all sufficiently large $L$, $u_0=h\psi_L$ satisfies
$\|u_0\|_{H^1}\le R$. Moreover,
$\ave{u_0}\le hC_\Psi^*r^2/(L|\Om|)<\xi_a-\delta$,
so $u_0\in\mathcal A(R,\delta)$.

Set $\rho=re^{-L}$.
Since $u_0=h$ on $U_\rho$, Lemma~\ref{lem:localized-Kaplan} shows that
the solution with initial datum $u_0$ blows up whenever
\[
 0<\eps
 \le
 \frac{r^2e^{h-2L}}{8\kappa h}.
\]
The right-hand side is positive. Thus stabilization fails at some
positive diffusivity, and therefore $\eps_{\mathrm{unif}}(R,\delta)>0$.
\end{proof}

We now let the height and concentration scale grow with $R$ and optimize
their competition to obtain the matching exponential lower bound.

\begin{theorem}\label{thm:necessity}
Fix $\delta\in(0,\xi_a)$. There exist $c_\delta,R_\delta''>0$, depending
only on $a$, $\Om$, and $\delta$, such that
\begin{equation}\label{eq:lowerbound}
 \eps_{\mathrm{unif}}(R,\delta)
 \ge c_\delta R^{-2}
      \exp\left(\frac{R^2}{8\pi}\right)
 \qquad(R\ge R_\delta'').
\end{equation}
\end{theorem}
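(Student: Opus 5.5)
We follow the proof of Corollary~\ref{cor:eps-unif-positive}, now letting the height $h$ and the concentration parameter $L$ grow with $R$ and balancing them. The test datum will be $u_0=h\psi_L$, with $\psi_L$ the boundary Moser profile of Lemma~\ref{lem:boundary-moser-profile} for a fixed radius $r\in(0,r_0]$. We will show three things: $u_0\in\mathcal A(R,\delta)$; Lemma~\ref{lem:localized-Kaplan} applies with $\rho=\rho_L=re^{-L}$; and the resulting admissible diffusivity, which is of order $r^2e^{h-2L}/h$, is at least $c_\delta R^{-2}e^{R^2/(8\pi)}$. Once this holds, stabilization fails at some $\eps$ of that size. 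By the monotone definition \eqref{eq:eps-unif}, this gives \eqref{eq:lowerbound}.

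The first step is the choice of parameters. Lemma~\ref{lem:boundary-moser-profile} gives
\[
 \|u_0\|_{H^1}^2=h^2\Bigl(\frac\pi L+O(L^{-2})\Bigr).
\]
To stay inside the ball we therefore take $h^2\pi/L$ slightly below $R^2$. Substituting $L\approx\pi h^2/R^2$, the exponent becomes
\[
 h-2L\approx h-\frac{2\pi h^2}{R^2}.
\]
This is maximized at $h=R^2/(4\pi)$, where it equals $R^2/(8\pi)$, which is the sharp exponent. Concretely, we set $L=R^2/(16\pi)+A$ for a constant $A$ to be fixed, and
\[
 h=\sqrt{\frac{L}{\pi}}\,R\,(1+C_0L^{-1})^{-1/2},
\]
where $C_0$ absorbs the $O(L^{-2})$ error, so that $\|u_0\|_{H^1}\le R$.

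The second step is the error bookkeeping, which I expect to be the main obstacle. The shrinking factor on $h$, together with the shift $A$, changes $h-2L$ only by $O(1)$. Write $L=L_0+A$ with $L_0=R^2/(16\pi)$. Since $R\sqrt{L/\pi}=4L_0^{1/2}L^{1/2}$, we get
\[
 4\sqrt{L_0L}-2L=2L_0-2\bigl(\sqrt L-\sqrt{L_0}\bigr)^2=2L_0+O(A^2/L_0).
\]
The correction from $C_0$ contributes $-2C_0\sqrt{L_0/L}=O(1)$. Hence
\[
 h-2L\ge\frac{R^2}{8\pi}-C,
\]
and since $h\asymp R^2$ the prefactor is $r^2/(8\kappa h)\gtrsim R^{-2}$. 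This produces exactly the bound $c_\delta R^{-2}e^{R^2/(8\pi)}$.

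The third step is to check the remaining hypotheses.
\begin{itemize}
\item We have $h\ge1$ and $1+ah\le e^h/8$ for $R$ large.
\item We have $u_0\ge0$, and $u_0=h$ on $U_{\rho_L}=\Psi(B^+_{\rho_L})$, so $u_0\ge h$ there.
\item The mean satisfies $\ave{u_0}\le hC_\Psi^*r^2/(L|\Om|)$. Because $h/L\asymp R^2/R^2$ is bounded, this is $\le C r^2$. We fix $r$ small, depending on $\delta$, so that this is at most $\xi_a-\delta$. This is where $c_\delta$ depends on $\delta$.
\end{itemize}
Lemma~\ref{lem:localized-Kaplan} then gives blow-up for all
\[
 \eps\le\frac{r^2e^{-2L}}{8\kappa}\frac{e^h}{h}.
\]
The only delicate point is confirming that the $O(L^{-2})$ gradient error and the choice of $A$ do not cost more than a constant factor in the exponent. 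The computation above shows they cost only $O(1)$, so taking $R_\delta''$ large completes the argument.
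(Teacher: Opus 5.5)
Your proposal is correct and follows the paper's proof. You use the same datum $h\psi_L$ with $L\approx R^2/(16\pi)$ and $h\approx R^2/(4\pi)$, a radius $r=r(\delta)$ chosen small to keep the mean below $\xi_a-\delta$, and Lemma~\ref{lem:localized-Kaplan} at $\rho=re^{-L}$. The only cosmetic difference is that the paper normalizes exactly, taking $L_R=R^2/(16\pi)$ and $h_R=R/\|\psi_{L_R}\|_{H^1}$, so that $\|u_0^R\|_{H^1}=R$ and $h_R=R^2/(4\pi)+O(1)$; this makes your safety factor $(1+C_0L^{-1})^{-1/2}$ and the shift $A$ unnecessary.
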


\begin{proof}
For fixed $\delta$, choose $r=r(\delta)\in(0,r_0]$ so small that
$5C_\Psi^*r^2/|\Om|<\xi_a-\delta$.
We next choose the concentration scale by optimizing the leading
reaction--diffusion competition. If $h_{R,L}=R/\|\psi_L\|_{H^1}$ and $L\asymp R^2$, then
Lemma~\ref{lem:boundary-moser-profile} gives
$h_{R,L}=R\sqrt{L/\pi}+O(1)$.
Lemma~\ref{lem:localized-Kaplan} applies to a concentration core of radius $\rho=re^{-L}$ 
for diffusivities up to order
\[
 \rho^2\frac{e^{h_{R,L}}}{h_{R,L}}
 =
 \frac{r^2}{h_{R,L}}
 \exp\left(h_{R,L}-2L\right).
\]
The leading exponent is optimized by
\[
  \max_{L>0}
  \left(
    R\sqrt{\frac{L}{\pi}}-2L
  \right)
  =
  \frac{R^2}{8\pi},
  \quad\text{attained at }
  L=\frac{R^2}{16\pi}.
\]
Accordingly, set
\[
 L_R=\frac{R^2}{16\pi},
 \qquad
 h_R=\frac{R}{\|\psi_{L_R}\|_{H^1}},
 \qquad
 u_0^R=h_R\psi_{L_R}.
\]
Then $\|u_0^R\|_{H^1}=R$. Lemma~\ref{lem:boundary-moser-profile} yields
\[
 h_R=\frac{R^2}{4\pi}+O(1),
 \qquad
 \frac{h_R}{L_R}=4+O(R^{-2}).
\]
Consequently, for all sufficiently large $R$,
\[
 \ave{u_0^R}
 \le
 \frac{h_R}{L_R}
 \frac{C_\Psi^*r^2}{|\Om|}
 \le
 \frac{5C_\Psi^*r^2}{|\Om|}
 <\xi_a-\delta.
\]
Thus $u_0^R\in\mathcal A(R,\delta)$.

Set $\rho_R=re^{-L_R}$. For all sufficiently large $R$, one also has
$h_R\ge1$ and $1+ah_R\le e^{h_R}/8$.
Lemma~\ref{lem:localized-Kaplan} therefore shows that the solution with
initial datum $u_0^R$ blows up whenever
\[
 0<\eps
 \le
 \eps_{\mathrm{lb}}(R,\delta)
 =
 \frac{r^2}{8\kappa}
 \frac{\exp(h_R-2L_R)}{h_R}.
\]
Finally,
\[
 h_R-2L_R=\frac{R^2}{8\pi}+O(1),
 \qquad
 h_R\asymp R^2.
\]
Hence, after increasing $R_\delta''$ if necessary,
\[
 \eps_{\mathrm{lb}}(R,\delta)
 \ge
 c_\delta R^{-2}
 \exp\left(\frac{R^2}{8\pi}\right).
\]
Since stabilization fails at $\eps=\eps_{\mathrm{lb}}(R,\delta)$, the definition
of $\eps_{\mathrm{unif}}$ gives
$\eps_{\mathrm{unif}}(R,\delta)\ge\eps_{\mathrm{lb}}(R,\delta)$,
which proves \eqref{eq:lowerbound}.
\end{proof}

We are now ready to prove Theorem~\ref{thm:intro-main}.

\begin{proof}[Proof of Theorem~\ref{thm:intro-main}]
Corollary~\ref{cor:eps-unif-positive} gives
$\eps_{\mathrm{unif}}(R,\delta)>0$ for every $R>0$. On the other hand,
Theorem~\ref{thm:stab} with $q=2$ gives
$\eps_{\mathrm{unif}}(R,\delta)
\le\eps_{\mathrm{stab}}^{(2)}(R,\delta)<\infty$
for $R>0$, so the first assertion holds for all radii.

For the asymptotic bounds, let
$R_\delta=\max\{R_\delta',R_\delta''\}$.
Corollary~\ref{cor:endpoint} and Theorem~\ref{thm:necessity} then give
\eqref{eq:intro-two-sided} for $R\ge R_\delta$. Taking logarithms in \eqref{eq:intro-two-sided} yields the stated asymptotic formula.
\end{proof}

\section{Concluding remarks and open problems}\label{sec:conclusion}
The two main results point to a simple idea. Large diffusion must homogenize an $H^1$-bounded profile before the exponential reaction amplifies its spatial concentration. The mean-zero Moser--Trudinger inequality quantifies the cost of suppressing this concentration, while the boundary Moser construction shows that the same boundary-critical geometry produces the worst-case obstruction. 

One may ask how this picture changes under homogeneous Dirichlet conditions. The Dirichlet heat flow drives its linear dynamics toward zero, and the spatial mean no longer obeys a diffusion-independent scalar comparison. A first question is whether every bounded $H_0^1(\Om)$ family is uniformly stabilized to zero. The sharp Dirichlet Moser--Trudinger coefficient is $4\pi$ \cite{Moser}, and concentration is of interior rather than boundary type. These features suggest the sharp asymptotic scale
\[
 \log \eps_{\mathrm{unif}}^{\mathrm{D}}(R)
 \overset{?}{=}
 \frac{R^2}{16\pi}+O(\log R)
 \qquad(R\to\infty),
\]
where $\eps_{\mathrm{unif}}^{\mathrm{D}}(R)$ denotes the Dirichlet uniform stabilization threshold.

The higher-dimensional Sobolev-critical setting raises a second question. For $N\ge3$, one may consider the Neumann problem with reaction
\[
 f(u)=(u_+)^{(N+2)/(N-2)}-au,
 \quad u_+=\max\{u,0\},\quad a>0,
\]
which retains the convexity. We do not know whether sufficiently large diffusion again recovers the scalar trichotomy uniformly on bounded $H^1$ families and what determines the sharp cost of uniform subthreshold stabilization. The critical scaling suggests that boundary-concentrating
Sobolev bubbles should play the role of the boundary Moser profiles used in this work. If such bubbles are indeed the extremal obstruction, the uniform stabilization threshold are conjectured to be $R^{4/(N-2)}$.

\end{document}